\newcommand{\inner}[1]{\left\langle #1 \right\rangle}
\newcommand{\norm}[1]{\left\Vert #1\right\Vert}
\newcommand{\bb}[1]{\mathbb{#1}}
\newcommand{\conv}[0]{\mathrm{conv}\,}%convex hull
\newcommand{\cl}[0]{\mathrm{cl}\,} % closure
\newcommand{\ca}[1]{\mathcal{#1}}
\newcommand{\M}[0]{\mathcal{M}}
\newcommand{\tp}{^\top}
\newcommand{\A}{\ca{A}}
\newcommand{\D}{\ca{D}}
\newcommand{\Dh}{\ca{D}_h}
\newcommand{\hDh}{\hat{\ca{D}}_h}
\newcommand{\hDp}{\hat{\ca{D}}_p}
\newcommand{\hDs}{\hat{\ca{D}}_s}
\newcommand{\etak}{{\eta_{k} }}
\newcommand{\xk}{{x_k}}
\newcommand{\xkp}{{x_{k+1}}}
\newcommand{\yk}{{y_k}}
\newcommand{\ykp}{{y_{k+1}}}
\newcommand{\ysol}{{y^\star}}
\newcommand{\Rn}{\mathbb{R}^n}
\newcommand{\Rp}{\mathbb{R}^p}
\newcommand{\JAx}{J_{\mathcal{A},x}}
\newcommand{\JAy}{J_{\mathcal{A},y}}
\newcommand{\X}{\mathcal{X}}
\newtheorem{theo}{Theorem}[section]
\newtheorem{lem}[theo]{Lemma}
\newtheorem{prop}[theo]{Proposition}
\newtheorem{coro}[theo]{Corollary}
\newtheorem{defin}[theo]{Definition}
\newtheorem{rmk}[theo]{Remark}
\newtheorem{assumpt}[theo]{Assumption}
\numberwithin{equation}{section}
\title{An Improved Unconstrained Approach for Bilevel Optimization}
\author{
	{
		Xiaoyin Hu, ~
		Nachuan Xiao,~
		Xin Liu, ~
		and ~ Kim-Chuan Toh
	}
}
\begin{document}
	\maketitle
	
	\begin{abstract}
		In this paper, we focus on the nonconvex-strongly-convex bilevel optimization problem (BLO). In this BLO, the objective function of the upper-level problem is nonconvex and possibly nonsmooth, and the lower-level problem is smooth and strongly convex with respect to the underlying variable $y$. We show that the feasible region of BLO is a Riemannian manifold. Then we transform BLO to its corresponding unconstrained constraint dissolving problem (CDB),  whose objective function is explicitly formulated from the objective functions in BLO. 
		We prove that BLO is equivalent to the unconstrained optimization problem CDB. Therefore, various efficient unconstrained approaches, together with their theoretical results, can be directly applied to BLO through CDB.  We propose a unified framework for developing subgradient-based methods for CDB. Remarkably, we show that several existing efficient algorithms can fit the unified framework and be interpreted as descent algorithms for CDB.  These examples further demonstrate the great potential of our proposed approach.
	\end{abstract}

\section{Introduction}
In this paper, we focus on the following nonconvex-strongly-convex bilevel optimization problem
\begin{equation}
	\tag{BLO}
	\label{Prob_Ori}
	\begin{aligned}
		\min_{x \in \Rn, y \in \Rp} \quad &f(x, y) & \text{(upper-level problem)}\\
		\text{s. t.} \quad &y = \mathop{\arg\min}_{y \in \bb{R}^p}~ g(x,y), & \text{(lower-level problem)}
	\end{aligned}
\end{equation}
where the functions $f$ and $g$ satisfy the following blanket assumptions,

\begin{assumpt}{\bf Blanket assumptions}
	\label{Assumption_1}
	
	\begin{enumerate}
		\item $f$ is possibly nonsmooth and $M_f$-Lipschitz continuous over $\Rn\times \Rp$.
		\item The function $g(x,y)$ is twice differentiable and $\mu$-strongly convex with respect to $y$ for any fixed $x$, i.e. $\nabla_{yy}^2 g(x, y) \succeq \mu I_p$ holds for any $(x, y) \in \Rn\times \Rp$. 
		\item The gradient $\nabla g(x,y)$ is $L_g$-Lipschitz continuous. 
		\item The Hessian matrices $\nabla_{yy}^2 g(x, y)$ and $\nabla^2_{xy} g(x,y)$ are $Q_g$-Lipschitz continuous. 
		\item $\nabla_{yy}^2 g(x, y)$ is continuously differentiable over $\Rn\times \Rp$. 
	\end{enumerate}
\end{assumpt} 
Problem \ref{Prob_Ori} has attracted a lot of attention in the current era of big data and artificial intelligence due to its close connection with 
various real-world applications, including reinforcement learning \cite{konda1999actor}, hyperparameter optimization \cite{tappen2008logistic,hutter2011sequential,franceschi2017forward,lorraine2020optimizing}, and meta learning \cite{finn2017model,samuel2009learning}. Interested reader can refer to several survey papers \cite{colson2007overview,liu2021investigating} and the references therein for details. 

The blanket assumption \ref{Assumption_1} is commonly assumed in a great number of existing works. 
In particular, Assumptions \ref{Assumption_1} is satisfied in the applications discussed in \cite{domke2012generic,maclaurin2015gradient,pedregosa2016hyperparameter,franceschi2018bilevel,ghadimi2018approximation,liao2018reviving,grazzi2020iteration,ji2021bilevel}.  It should be noted that although we assume the Lipschitz smoothness of $\nabla_{yy}^2g(x, y)$, it is only necessary in
the theoretical analysis, and we do not involve the computation of any third-order derivatives in our proposed methods throughout this paper.

\subsection{Existing works}

Recently, nonconvex-strongly-convex bilevel optimization problems with Lipschitz smooth objective functions have been extensively studied.
%in recent years. 
For any given $x \in \Rn$, we denote $\ysol(x)$ as the unique minimizer of the lower-level problem, i.e., $\ysol(x) := \mathop{\arg\min}_{y \in \bb{R}^p}~ g(x,y)$.  Since the lower-level problem of \ref{Prob_Ori} is assumed to be strongly convex with respect to $y$, $\ysol(x)$ is 
%unique and 
differentiable with respect to $x$ by the implicit function theorem. Therefore, 
\ref{Prob_Ori} is equivalent to the following unconstrained optimization problem that only involves the $x$-variable,
\begin{equation}
	\label{Eq_UNCon}
	\min_{x \in \Rn} \quad \Phi(x) := f(x, \ysol(x)).
\end{equation} 
Various existing efficient approaches are developed based on solving the unconstrained optimization problem \eqref{Eq_UNCon}. 
However,  $\Phi(x)$ is implicitly formulated since the solution to the lower-level problem usually does not have a closed-form expression \cite{hong2020two}. Therefore,  it is usually intractable to compute the exact function value and derivatives of $\Phi(x)$.

Some of the existing approaches \cite{domke2012generic,pedregosa2016hyperparameter,ghadimi2018approximation,grazzi2020iteration,ji2021bilevel}, referred to as {\it double-loop} approaches, are developed by introducing inner loops in each iteration to obtain an approximated estimation for $\ysol(x)$. Then these approaches inexactly evaluate $\nabla \Phi(x)$ through the approximated solution for the lower-level problem and chain rule.  Although their theoretical properties are simple to analyze, these algorithms may suffer from  poor performance as one has to take multiple steps in the inner loop to solve the lower-level problem to a desired accuracy \cite{khanduri2021near}. It is usually challenging to balance the computational cost of the inner loops and the overall performance of these algorithms.

Furthermore, several {\it single-loop} approaches \cite{chen2021single,hong2020two,khanduri2021near} are proposed to minimize $\Phi(x)$ by updating the  $x$- and $y$-variables simultaneously, hence avoiding inner loops for an approximated solution of the lower-level problem. 
In each iteration, these single-loop approaches update the $x$-variable by taking an approximated gradient descent step to $\Phi(x)$, while the $y$-variable is updated to track $\ysol(x)$ by taking a descent step for the lower-level problem \cite{hong2020two,khanduri2021near} or other specifically designed schemes \cite{chen2021single}.  
Although prior arts \cite{hong2020two,khanduri2021near,chen2021single,yang2021provably} use $\Phi(x)$ as the merit function in their theoretical analysis, these existing single-loop approaches cannot be simply interpreted as approximated gradient descent methods to minimize $\Phi(x)$. Therefore, establishing the related theoretical analysis for these  approaches becomes more complicated and challenging in these existing works.

Though solving \ref{Prob_Ori} with smooth objective functions has been intensively studied, how to solve \ref{Prob_Ori} with a nonsmooth upper-level objective function is relatively less explored. 
Due to the implicit formulation of $\Phi(x)$, existing single-loop and double-loop approaches have to approximately solve the lower-level problems and evaluate $\nabla \Phi(x)$ inexactly. 
Therefore, without the assumption on the Lipschitz smoothness of $f$, the above-mentioned approaches have no theoretical guarantee. On the other hand, computing the exact subdifferential of $\Phi$ requires the exact solution to the lower-level problem, which is usually expensive to achieve in practice. As a result, it is challenging to develop algorithms for \ref{Prob_Ori} based on $\Phi(x)$.

Apart from those existing approaches developed for minimizing $\Phi(x)$ over $\Rn$, several other existing approaches \cite{hansen1992new,moore2010bilevel} reshape \ref{Prob_Ori} as the following single-level optimization problem with equality constraints \cite{ye2006constraint}
\begin{equation}
	\label{Eq_Con}
	\begin{aligned}
		\min_{x \in \bb{R}^n, y \in \Rp} \quad &f(x, y)\\
		\text{s. t.} \quad &\nabla_y g(x,y) = 0. 
	\end{aligned}
\end{equation}
Then  \eqref{Eq_Con} can be solved by employing existing approaches for constrained optimization, including polynomial optimization approach (when the functions involved are polynomials) \cite{nie2021lagrange},  sequential quadratic programming methods \cite{curtis2012a,xu2015smoothing}, etc.  However, these approaches treat \ref{Prob_Ori} as a constrained optimization problem with $p$ equality constraints, hence they are usually not as efficient as those aforementioned single-loop and double-loop approaches in practice \cite{hong2020two}.

\subsection{Motivation}
Our motivation in this paper comes from the {\it constraint dissolving} approaches \cite{xiao2022constraint} for Riemannian optimization. Let $\M$ be the feasible region of \eqref{Eq_Con}, i.e. 
\begin{equation}
	\label{Eq_M}
	\begin{aligned}
		\M := \{ (x, y) \in \Rn\times \Rp: \nabla_y g(x, y) = 0 \}.
	\end{aligned}
\end{equation}
As $g(x, y)$ is twice-order differentiable and strongly convex with respect to $y$, the constraints $\nabla_y g(x, y) = 0$ satisfy linear independent constraint qualification (LICQ) for any   $(x,y) \in \M$. Therefore, the implicit function theorem ensures that $\M$ is a Riemannian manifold embedded in $\Rn\times \Rp$ \cite{jones2004calculus}. Although various Riemannian optimization approaches are developed in recent years \cite{Absil2009optimization,boumal2020introduction,hu2020brief}, the required geometrical materials of the manifold $\M$ are usually expensive to compute. For example, computing the retraction of $\M$ can be regraded as computing a projection from the tangent space to $\M$, which is as expensive as solving the lower-level subproblem exactly. To our best knowledge, there is no efficient Riemannian optimization approach developed for solving \ref{Prob_Ori}.  

When $f$ is assumed to be Lipschitz smooth over $\Rn \times \Rp$, \cite{xiao2022constraint} proposes a general framework for developing the constraint dissolving function for \ref{Prob_Ori}, which takes the form as 
\begin{equation}
	\label{Eq_CDF}
	f(\tilde{\A}(x, y)) + \frac{\beta}{2} \norm{\nabla_y g(x, y)}^2.
\end{equation}
Here,  the mapping $\tilde{\A}: \Rn\times \Rp \to \Rn\times \Rp$ satisfying the following assumptions is called 
	the constraint dissolving mapping.
	\begin{assumpt}
		\label{Assumption_constraint_dissolving}
		\begin{itemize}
			\item $\tilde{\A}$ is locally Lipschitz continuous over $\Rn\times \Rp$.
			\item $\tilde{\A}(x, y) = (x,y)$ for any $(x,y) \in \M$.
			\item The Jacobian of $(\nabla_y g) \circ \tilde{\A}$ equals to $0$ for any $(x, y) \in \M$. 
		\end{itemize}
\end{assumpt}
As illustrated in \cite[Lemma 3.3]{xiao2022constraint}, any constraint dissolving mapping $\tilde{\A}$ will drive any $(x, y) \in \Rn \times \Rp$ closer to the feasible region $\M$ 
	with the feasibility violation locally quadratically converges to zero. This property
	plays a crucial role in establishing the equivalence between the original bilevel optimization problem (BLO)
	and minimizing the constraint dissolving function \eqref{Eq_CDF}. 
	The detailed proof can be found at \cite[Section 3]{xiao2022constraint}.

Moreover,  \cite{xiao2022constraint} provides some practical schemes for constructing the constraint dissolving mapping, see \cite[Section 4.1]{xiao2022constraint} for instances. However, \cite{xiao2022constraint} focuses on smooth optimization over the Riemannian manifold. Existing constraint dissolving approaches for nonsmooth optimization are only developed for special manifolds \cite{hu2022constraint}. Furthermore, the equivalence established in \cite{xiao2022constraint} only holds in a neighborhood of the feasible region $\M$.
	For general nonsmooth cases, how to choose an appropriate constraint dissolving operator $\tilde{\A}$ for \eqref{Eq_CDF} and establish the equivalence between \eqref{Prob_Ori} and \eqref{Eq_CDF} over $\Rn\times \Rp$ rather than a neighborhood of $\M$ remain to be studied.

\subsection{Contributions}
In this paper, 
	we consider the mapping $(x, y) \mapsto (x, \A(x, y))$, where $\A$ is defined by 
	\begin{equation}\label{CDB-A}
		\A(x,y) := y - \left( \nabla_{yy}^2g(x,y) \right)^{-1} \nabla_y g(x,y)
	\end{equation}
	as a special choice of $\tilde{\A}$. 
	Substituting this $\tilde{\A}$ into \eqref{Eq_CDF}, we obtain a 
	constraint dissolving function for bilevel optimization (\ref{Prob_Pen})
	\begin{equation}
		\tag{CDB}
		\label{Prob_Pen}
		\begin{aligned}
			\min_{x \in \bb{R}^{n}, y \in \bb{R}^p}\quad h(x, y) :=f\left( x,  \A(x,y) \right) + \frac{\beta}{2} \norm{\nabla_y g(x,y)}^2.
		\end{aligned}
	\end{equation}

We prove that such an $\tilde{\A}$ satisfies Assumption 1.2 and hence is a constraint dissolving mapping \cite{xiao2022constraint}. Clearly,  $h$ can be explicitly formulated from $f$ and the derivatives of $g$. 
Under mild conditions, we prove that \ref{Prob_Ori} and \ref{Prob_Pen} have the same stationary points over $\Rn\times \Rp$ from the perspective of both the Clarke subdifferential and the conservative field \cite{bolte2021conservative}. As a result,  the bilevel optimization problem \ref{Prob_Ori} is equivalent to the unconstrained optimization problem \ref{Prob_Pen}, and various optimization approaches for unconstrained nonsmooth optimization can be  directly implemented to solve \ref{Prob_Ori} through \ref{Prob_Pen}.

We propose a unified framework for developing subgradient-based methods to solve \ref{Prob_Pen} and prove their global convergence. We provide several illustrative examples on how to develop single-loop subgradient-based methods and how to establish their convergence properties from the proposed framework. Moreover, we can interpret the updating schemes in the deterministic versions of several existing single-loop algorithms \cite{hong2020two,chen2021single,khanduri2021near} as  approximated gradient-descent steps for \ref{Prob_Pen}. Therefore, we provide a clear explanation for the updating schemes in these existing algorithms,  extend these algorithms to nonsmooth cases and prove their convergence properties based on our proposed framework. These examples further highlight the significant advantages and great potentials of \ref{Prob_Pen}.

\section{Preliminaries}
\subsection{Basic notations}
Let $\inner{\cdot, \cdot}$ be the standard inner product  and $\norm{\cdot}$ be the $\ell_2$-norm of a vector or an operator. $\bb{B}_{\delta}(x,y):= \{ (\tilde{x}, \tilde{y}) \in \Rn \times \Rp: \norm{\tilde{x} - x}^2 + \norm{\tilde{y} - y}^2 \leq \delta^2 \}$ refers to the ball
	centered at $(x,y)$ with radius $\delta$. Moreover, for a given set $\X$, $\mathrm{dist}(x, \ca{X})$ denotes the distance between $x$ and a set $\ca{X}$, i.e. $\mathrm{dist}(x, \ca{X}) := \mathop{\arg\min}_{y \in \ca{X}} ~\norm{x-y}$,  $\cl \X$ denotes the closure of $\X$ and $\conv \X$ denotes the convex hull of $\X$. 
For any differentiable function $g: \Rn \times \Rp \to \bb{R}$, let $\nabla_x g$ and $\nabla_y g$ be the partial derivatives of $g$ with respect to $x$ and $y$, respectively. Moreover, $\nabla_{xy}^2 g(x, y)$ and  $\nabla_{yy}^2 g(x, y)$ denotes the partial Jacobian of $\nabla_y g(x, y)$ with respect to variable $x$ and $y$, respectively. More precisely, 
\begin{equation*}
	\footnotesize
	\nabla_{xy}^2 g(x, y) := \left[  \begin{matrix}
		\frac{\partial^2 g(x, y) }{\partial x_1\partial y_1} & \cdots & \frac{\partial^2 g(x, y) }{\partial x_1\partial y_p} \\
		\vdots & \ddots & \vdots \\
		\frac{\partial^2 g(x, y) }{\partial x_n\partial y_1} & \cdots & \frac{\partial^2 g(x, y) }{\partial x_n\partial y_p} \\
	\end{matrix}\right] \in \bb{R}^{n\times p}, 
	\quad 
	\nabla_{yy}^2 g(x, y) := \left[  \begin{matrix}
		\frac{\partial^2 g(x, y) }{\partial y_1\partial y_1} & \cdots & \frac{\partial^2 g(x, y) }{\partial y_1\partial y_p} \\
		\vdots & \ddots & \vdots \\
		\frac{\partial^2 g(x, y) }{\partial y_p\partial y_1} & \cdots & \frac{\partial^2 g(x, y) }{\partial y_p\partial y_p} \\
	\end{matrix}\right] \in \bb{R}^{p\times p}, 
\end{equation*} 
and   $\nabla_{yx}^2 g(x, y)$ is the transpose of $\nabla_{xy}^2 g(x, y)$.
Furthermore, $\nabla_{xyy}^3 g(x, y)$ is the partial derivative of $\nabla_{xy}^2 g(x, y)$ with respect to variable $y$, which is expressed as the linear mapping from $\Rp$ to $\bb{R}^{n\times p}$ by $\nabla_{xyy}^3 g(x, y) [d] := \lim\limits_{t\to 0} ~\frac{1}{t} \left(\nabla_{xy}^2g(x, y+td) -  \nabla_{xy}^2g(x, y) \right)$. 
Similarly, $\nabla_{yyy}^3 g(x, y)$ is the partial derivative of $\nabla_{yy}^2 g(x, y)$ with respect to variable $y$, which is expressed as the linear mapping from $\Rp$ to $\bb{R}^{p\times p}$ by $\nabla_{yyy}^3 g(x, y) [d] := \lim\limits_{t\to 0} ~\frac{1}{t} \left(\nabla_{yy}^2 g(x, y+td) -  \nabla_{yy}^2g(x, y) \right)$. 
Under Assumption \ref{Assumption_1}, it is easy to verify that the inequalities $\norm{\nabla_{yyx}^3 g(x, y)[d_x]} \leq Q_g\norm{d_x}$ and $\norm{\nabla_{yyy}^3 g(x, y)[d_y]} \leq Q_g\norm{d_y}$ hold for all $(x,y)\in \Rn\times \Rp$ and $(d_x,d_y)\in  \Rn\times \Rp$.

\subsection{Clarke subdifferential}

\begin{defin}
	\label{Defin_Subdifferential}
	For any given locally Lipschitz continuous function $f: \Rn\times \Rp \to \bb{R}$ and any $(x, y) \in \Rn \times \Rp$, the generalized directional derivative of $f$ at $(x,y)$  in the direction $(d_x, d_y) \in \Rn \times \Rp$, denoted by $f^\circ(x,y; d_x, d_y)$, is defined as 
	\begin{equation*}
		f^\circ(x,y; d_x, d_y) := \mathop{\lim\sup}_{(\tilde{x}, \tilde{y})\to (x, y), ~t \downarrow  0}~ \frac{f(\tilde{x} + td_x, \tilde{y} + t d_y) - f(\tilde{x}, \tilde{y})}{t}. 
	\end{equation*}
	Then the generalized gradient or the Clarke subdifferential of $f$ at $(x, y)$, denoted by $\partial f(x, y)$, is defined as 
	\begin{equation*}
		\begin{aligned}
			\partial f(x, y) := &\left\{ (w_x, w_y) \in \Rn\times \Rp : \right. \\
			&\left.\inner{w_x, d_x} + \inner{w_y, d_y} \leq f^\circ (x, y; d_x, d_y), \text{ for all } (d_x, d_y) \in \Rn\times \Rp\right\}.
		\end{aligned}
	\end{equation*}
\end{defin}

\begin{rmk}
	For any locally Lipschitz continuous function $f: \Rn\times \Rp \to \bb{R}$, its Clarke subdifferential is compact and convex for any $(x, y) \in \Rn \times \Rp$. Moreover, the mapping $(x,y) \mapsto \partial f(x, y)$ is outer-semicontinuous over $\Rn \times \Rp$ \cite{clarke1990optimization}. 
\end{rmk}

\begin{defin}
	\label{Defin_Clarke_regular}
	We say that $f$ is (Clarke) regular at $(x, y) \in \Rn \times \Rp$ if for every direction $(d_x, d_y)\in \Rn \times \Rp$, the one-sided directional derivative 
	\begin{equation*}
		f^\star(x,y;d_x, d_y) := \lim_{t \downarrow 0} \frac{f(x + td_x, y+td_y) - f(x, y)}{t} 
	\end{equation*}
	exists and $f^\star(x,y; d_x, d_y) = f^\circ(x,y;d_x, d_y)$.
\end{defin}

	\begin{defin}
		\label{Defin_Eps_Subdifferential}
		For any given locally Lipschitz continuous function $f: \Rn\times \Rp \to \bb{R}$ and any $(x, y) \in \Rn \times \Rp$, the $\delta$-Goldstein subdifferential of $f$ at $(x, y)$  is defined as 
		\begin{equation*}
			\partial_{\delta} f(x, y) = \cl \conv\left(\cup_{(\tilde{x}, \tilde{y}) \in \bb{B}_{\delta}(x, y)} \partial f(\tilde{x}, \tilde{y})\right). 
		\end{equation*}
		
	\end{defin}
	
	The following proposition present some basic properties of $\delta$-Goldstein subdifferential, which are mainly from the upper-semicontinuity of $\partial f$, as illustrated in \cite[Theorem 3.1]{burke2002approximating} and \cite[Lemma 7]{zhang2020complexity}. 
	
	\begin{prop}
		\label{Prop_Eps_Subdifferential}
		For any given locally Lipschitz continuous function $f: \Rn\times \Rp \to \bb{R}$ and any $(x, y) \in \Rn \times \Rp$, it holds that 
		\begin{equation*}
			\lim_{\delta \to 0} \partial_{\delta} f(x, y) = \partial f(x, y). 
		\end{equation*}
		
	\end{prop}

\subsection{Conservative field}
\label{Subsection_conservative_field}
In this subsection, we introduce the concept of conservative field, which generalizes Clarke subdifferential for a broad class of nonsmooth functions. For simplicity, we provide a self-contained description and highlight some essential ingredients for our theoretical analysis. Interested readers can refer to several recent papers \cite{bolte2021conservative,castera2021inertial} for more details.

\begin{defin}
	A set-valued mapping $\ca{D}: \bb{R}^m \rightrightarrows \bb{R}^s$ is a mapping from $\bb{R}^m$ to a collection of subsets of $\bb{R}^s$. $\D$ is said to have closed graph if the graph of $\ca{D}$, defined by
	\begin{equation*}
		\mathrm{graph}(\D) := \left\{ (w,z) \in \bb{R}^m \times \bb{R}^s: w \in \bb{R}^m, z \in \D(w) \right\},
	\end{equation*}
	is a closed set.  
\end{defin}

\begin{defin}
	An absolutely continuous curve is a continuous mapping $\gamma: \bb{R} \to \Rn\times \Rp $ whose derivative $\gamma'$ exists almost everywhere in $\bb{R}$ and $\gamma(t) - \gamma(0)$ equals to the Lebesgue integral of $\gamma'$ between $0$ and $t$ for all $t \in \bb{R}_+$, i.e.,
	\begin{equation*}
		\gamma(t) = \gamma(0) + \int_{0}^t \gamma'(\tau) \mathrm{d} \tau, \qquad \text{for all $t \in \bb{R}_+$}.
	\end{equation*}
\end{defin}

With the concept of absolutely continuous curve, we can present the definition of a conservative set-valued field. 
\begin{defin}
	\label{Defin_conservative_field}
	Let $\ca{D}$ be a set-valued mapping from $\Rn\times \Rp  $ to subsets of $\Rn\times \Rp  $. Then we call $\ca{D}$ as a conservative field whenever it has closed graph, nonempty compact values, and for any absolutely continuous curve $\gamma: [0,1] \to \Rn\times \Rp $ satisfying $\gamma(0) = \gamma(1)$, we have
	\begin{equation}
		\label{Eq_Defin_Conservative_mappping}
		\int_{0}^1 \max_{v \in \ca{D}(\gamma(t)) } \inner{\gamma'(t), v} \mathrm{d}t = 0, 
	\end{equation}
	where the integral is understood in the Lebesgue sense. 
\end{defin}

\begin{rmk}
	When the set-valued mapping $\ca{D}$ has compact values and closed graph, then the  mapping $t \mapsto \max_{v \in \ca{D}(\gamma(t)) } \inner{\gamma'(t), v}$ 
	is Lebesgue measurable \cite[Lemma 1]{bolte2021conservative}. Therefore, the path integral in \eqref{Eq_Defin_Conservative_mappping} is well-defined. Furthermore, the equation \eqref{Eq_Defin_Conservative_mappping}   can be replaced by $\int_{0}^1 \min_{v \in \ca{D}(\gamma(t)) } \inner{\gamma'(t), v} \mathrm{d}t = 0$. 
\end{rmk}

\begin{defin}
	\label{Defin_conservative_field_path_int}
	Let $\ca{D}$ be a conservative field in $\Rn\times \Rp$. Then with any given $(x_0, y_0) \in \Rn\times \Rp$, we can define a function through
	\begin{equation}
		\label{Eq_Defin_CF}
		\begin{aligned}
			f(x, y) = &{} f(x_0, y_0) + \int_{0}^1 \max_{v \in \ca{D}(\gamma(t)) } \inner{\gamma'(t), v} \mathrm{d}t
			= f(x_0, y_0) + \int_{0}^1 \min_{v \in \ca{D}(\gamma(t)) } \inner{\gamma'(t), v} \mathrm{d}t
		\end{aligned}
	\end{equation} 
	for any absolutely continuous curve $\gamma$ that satisfies $\gamma(0) = (x_0, y_0)$ and $\gamma(1) = (x, y)$.  Then $f$ is called a potential function for $\ca{D}$, and we also say $\ca{D}$ admits $f$ as its potential function, or that $\ca{D}$ is a conservative field for $f$. 
\end{defin}

It is worth mentioning that any conservative field defines a unique potential function up to a constant, since the value of the integral does not depend on the selection of the path in \eqref{Eq_Defin_CF}.  Moreover, for any $f$ that is a potential function for some conservative field $\ca{D}$, $\partial f$ is a conservative field that admits $f$ as its potential function, and $\partial f(x, y) \subseteq \mathrm{conv}(\ca{D}(x, y))$ holds for any $(x, y) \in \Rn\times \Rp$ \cite[Corollary 1]{bolte2021conservative}.

As a result, for any $(x, y) \in \Rn\times \Rp$ that is a first-order stationary point of $f$, then it holds that $0 \in \partial f(x, y) \subseteq \conv(\ca{D}(x, y))$. Thus the stationarity of the potential function $f$ can be characterized by its corresponding conservative field $\ca{D}$ as illustrated in the following definition. 
\begin{defin}
	\label{Defin_D_critical}
	Given a fixed conservative field $\ca{D}: \Rn \times \Rp \rightrightarrows \Rn \times \Rp$ that  admits $f$ as a potential function, then we say $(x, y)$ is a $\ca{D}$-stationary point for $f$ if $0 \in \ca{D}(x, y) $. 
\end{defin}

%	
%	\begin{defin}
%		We say $f$ is path differentiable if $f$ is the potential of a conservative field on $\Rn \times \Rp$. 
%	\end{defin}
%	
%	
%	\begin{prop}[Corollary 2 in \cite{bolte2021conservative}]
%		Let $f: \Rn\times \Rp \to \bb{R}$ be any locally Lipschitz continuous function, then the following assertions are equivalent
%		\begin{enumerate}
%			\item $f$ is path differentiable.
%			\item $\partial f$ is a conservative field. 
%		\end{enumerate}
%	\end{prop}

Similar to the definition on conservative field, we present the definition on conservative mapping as follows. 
\begin{defin}
	\label{Defin_conservative_mapping}
	Let $F: \bb{R}^d \to \bb{R}^m$ be a locally Lipschitz function. $J_F : \bb{R}^d \rightrightarrows \bb{R}^{m\times d}$ is called a conservative mapping for $F$, if for any absolutely continuous curve $\gamma: [0,1] \to \bb{R}^d$, the function $t\mapsto F(\gamma(t))$ satisfies
	\begin{equation*}
		\frac{\mathrm{d} (F\circ\gamma)}{\mathrm{d}t} (t) = V\gamma'(t), \quad \text{for all } V \in J_F(\gamma(t)) \text{ and a.e. $t \in [0,1]$}.
	\end{equation*}
\end{defin}
When we choose $m = 1$ in Definition \ref{Defin_conservative_mapping}, the definition on conservative mapping is equivalent to the definition on conservative field in Definition \ref{Defin_conservative_field}, as illustrated in  \cite[Remark 7]{bolte2021conservative}. The following propositions illustrate that the chain rule and sum rule hold for conservative fields.

\begin{prop}[Lemma 7 in \cite{bolte2021conservative}]
	\label{Prop_Conservative_Chain_rule}
	Let $F_1: \bb{R}^d \to \bb{R}^m$ and $F_2 : \bb{R}^m \to \bb{R}^s$ be locally Lipschitz continuous mappings,  $J_{F_1}: \bb{R}^d \to \bb{R}^{m\times d}$ and $J_{F_2}: \bb{R}^d \to \bb{R}^{s\times m}$ be their associated conservative mappings. Then the mapping $x\mapsto J_{F_2}(F_1(x)) J_{F_1}(x)$ is a conservative mapping for $F_2 \circ F_1$. 
\end{prop}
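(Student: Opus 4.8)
The plan is to derive this chain rule by applying Definition \ref{Defin_conservative_mapping} twice in succession, once for $F_1$ along the given curve and once for $F_2$ along its image curve; the only point requiring care is that composing with a locally Lipschitz map preserves absolute continuity of curves, which is what lets the second application go through.

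First I would fix an arbitrary absolutely continuous curve $\gamma: [0,1] \to \bb{R}^d$ and set $\sigma := F_1 \circ \gamma$. Since $\gamma([0,1])$ is compact and $F_1$ is locally Lipschitz, $F_1$ is Lipschitz on a neighborhood of $\gamma([0,1])$ with some constant $L$; combined with the absolute continuity of $\gamma$ this yields that $\sigma$ is absolutely continuous on $[0,1]$ (for any finite family of disjoint subintervals, $\sum \norm{\sigma(b_i)-\sigma(a_i)} \le L \sum \norm{\gamma(b_i)-\gamma(a_i)}$, and the right side is controlled by the modulus of absolute continuity of $\gamma$). By the same token $F_2 \circ F_1 \circ \gamma = F_2 \circ \sigma$ is absolutely continuous, hence differentiable almost everywhere, so the left-hand side $\frac{\mathrm{d}(F_2\circ F_1\circ\gamma)}{\mathrm{d}t}(t)$ in the target identity is well-defined a.e.

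Next I would invoke conservativity of $J_{F_1}$ for the curve $\gamma$: there is a null set $N_1 \subseteq [0,1]$ such that for every $t \notin N_1$ the derivatives $\gamma'(t)$ and $\sigma'(t)$ exist and $\sigma'(t) = W\gamma'(t)$ for all $W \in J_{F_1}(\gamma(t))$. Then I would invoke conservativity of $J_{F_2}$ for the absolutely continuous curve $\sigma$: there is a null set $N_2 \subseteq [0,1]$ such that for every $t \notin N_2$, $(F_2\circ\sigma)'(t)$ exists and equals $U\sigma'(t)$ for all $U \in J_{F_2}(\sigma(t)) = J_{F_2}(F_1(\gamma(t)))$. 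On the full-measure set $[0,1]\setminus(N_1\cup N_2)$ both relations hold simultaneously, hence for every $U \in J_{F_2}(F_1(\gamma(t)))$ and every $W \in J_{F_1}(\gamma(t))$ we obtain $\frac{\mathrm{d}(F_2\circ F_1\circ\gamma)}{\mathrm{d}t}(t) = U\sigma'(t) = (UW)\gamma'(t)$. Since an arbitrary element $V$ of $J_{F_2}(F_1(\gamma(t)))\,J_{F_1}(\gamma(t))$ is precisely of the form $V = UW$ with $U \in J_{F_2}(F_1(\gamma(t)))$ and $W \in J_{F_1}(\gamma(t))$, this is exactly the defining identity of Definition \ref{Defin_conservative_mapping} for the mapping $x\mapsto J_{F_2}(F_1(x))\,J_{F_1}(x)$ relative to $F_2\circ F_1$. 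As a side remark one checks that this product map has nonempty values, is locally bounded, and has closed graph, inheriting these from the two factors, so it is a legitimate set-valued mapping of the required type.

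The main obstacle — and it is a mild one — is the absolute continuity of $\sigma = F_1\circ\gamma$: one must invoke local Lipschitz continuity of $F_1$ together with compactness of $\gamma([0,1])$, rather than assuming $F_1$ globally Lipschitz. The remaining content is bookkeeping with the two null sets $N_1, N_2$, and the key algebraic observation is simply that the matrix product $J_{F_2}(F_1(x))\,J_{F_1}(x)$ realizes exactly the composed linear action, matching the chain-rule form of $\frac{\mathrm{d}}{\mathrm{d}t}(F_2\circ F_1\circ\gamma)$.
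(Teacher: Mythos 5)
Your proposal is correct. Note that the paper does not prove this proposition at all---it is imported verbatim as Lemma~6 of \cite{bolte2021conservative}---and your argument is essentially the standard proof given in that reference: show that $\sigma=F_1\circ\gamma$ inherits absolute continuity from $\gamma$ via the local Lipschitz property of $F_1$ on the compact image $\gamma([0,1])$, apply Definition~\ref{Defin_conservative_mapping} once for $J_{F_1}$ along $\gamma$ and once for $J_{F_2}$ along $\sigma$, and intersect the two full-measure sets to get $\frac{\mathrm{d}(F_2\circ F_1\circ\gamma)}{\mathrm{d}t}(t)=(UW)\gamma'(t)$ for every product $UW$. Your closing remark on nonempty values, local boundedness and closed graph of the product map is the right thing to check against the full definition of a conservative mapping in the original reference (the paper's abridged Definition~\ref{Defin_conservative_mapping} omits these requirements), and it does follow from the corresponding properties of the two factors together with continuity of $F_1$.
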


\begin{prop}[Corollary 4 in \cite{bolte2021conservative}]
	\label{Prop_Conservative_Sum_rule}
	Let $f_1, ..., f_n$ be locally Lipschitz continuous functions for the conservative fields $\ca{D}_{f_1}, ..., \ca{D}_{f_n}$, respectively. Then $f = \sum_{i= 1}^n f_i$ is a potential function for $\ca{D}_f = \sum_{i= 1}^n \ca{D}_{f_i}$. 
\end{prop}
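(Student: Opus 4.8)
The plan is to verify directly that the pointwise Minkowski sum $\ca{D}_f := \sum_{i=1}^{n}\ca{D}_{f_i}$ meets the three requirements of Definition~\ref{Defin_conservative_field} and that its (unique) potential is exactly $f=\sum_{i=1}^{n} f_i$. Two of the required properties are immediate: $f$ is locally Lipschitz as a finite sum of locally Lipschitz functions, and for each $(x,y)$ the value $\ca{D}_f(x,y)=\sum_i\ca{D}_{f_i}(x,y)$ is a nonempty compact set, being a finite Minkowski sum of nonempty compact sets.

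The core of the argument is the conservativity/potential property, and I would route it through the conservative-mapping characterization, i.e.\ the $m=1$ case of Definition~\ref{Defin_conservative_mapping}, which by \cite[Corollary~2]{bolte2021conservative} is equivalent to Definition~\ref{Defin_conservative_field}. Since $f_i$ is a potential for $\ca{D}_{f_i}$, the field $\ca{D}_{f_i}$ is a conservative mapping for $f_i$; hence for every absolutely continuous curve $\gamma:[0,1]\to\Rn\times\Rp$ there is a full-measure subset of $[0,1]$ on which $\tfrac{\mathrm{d}}{\mathrm{d}t}(f_i\circ\gamma)(t)=\inner{\gamma'(t),v_i}$ for all $v_i\in\ca{D}_{f_i}(\gamma(t))$. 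Intersecting these $n$ full-measure sets and summing over $i$, the curve $f\circ\gamma=\sum_i f_i\circ\gamma$ is absolutely continuous with $\tfrac{\mathrm{d}}{\mathrm{d}t}(f\circ\gamma)(t)=\sum_i\inner{\gamma'(t),v_i}$ for every choice $v_i\in\ca{D}_{f_i}(\gamma(t))$, a.e.\ $t$. Because an arbitrary $v\in\ca{D}_f(\gamma(t))$ decomposes as $v=\sum_i v_i$ with $v_i\in\ca{D}_{f_i}(\gamma(t))$, this says precisely $\tfrac{\mathrm{d}}{\mathrm{d}t}(f\circ\gamma)(t)=\inner{\gamma'(t),v}$ for all $v\in\ca{D}_f(\gamma(t))$, a.e.\ $t$; that is, $\ca{D}_f$ is a conservative mapping for $f$, equivalently a conservative field admitting $f$ as its potential.

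There are two variants worth noting. One can instead verify the loop integral of \eqref{Eq_Defin_Conservative_mappping} by hand, using additivity of support functions, $\max_{v\in\sum_i A_i}\inner{d,v}=\sum_i\max_{v_i\in A_i}\inner{d,v_i}$ for compact $A_i$, so that the loop integral for $\ca{D}_f$ splits as the sum of those for the $\ca{D}_{f_i}$, each of which vanishes; the agreement of the potential with $f$ at a basepoint then follows from path-independence. Alternatively, and more slickly, write $f=H\circ G$ with $G(x,y)=((x,y),\dots,(x,y))$ smooth and $H(z_1,\dots,z_n)=\sum_i f_i(z_i)$ carrying the product conservative mapping $\ca{D}_{f_1}\times\cdots\times\ca{D}_{f_n}$, and apply the chain rule Proposition~\ref{Prop_Conservative_Chain_rule}; the composite mapping $J_H(G(x,y))\nabla G(x,y)$ collapses exactly to $\sum_i\ca{D}_{f_i}(x,y)=\ca{D}_f(x,y)$.

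I expect the only point that is not pure bookkeeping of derivatives (and of support functions) along curves to be ensuring that $\ca{D}_f$ genuinely qualifies as a \emph{conservative field} — i.e.\ has closed graph — rather than merely a conservative mapping. This rests on the local boundedness of each $\ca{D}_{f_i}$ (a consequence of the local Lipschitz continuity of its potential $f_i$; cf.~\cite{bolte2021conservative}): given $(w_k,z_k)\to(w,z)$ with $z_k=\sum_i z_k^i$, $z_k^i\in\ca{D}_{f_i}(w_k)$, local boundedness makes each $(z_k^i)_k$ bounded, so along a common subsequence $z_k^i\to z^i\in\ca{D}_{f_i}(w)$ by the closed graph of $\ca{D}_{f_i}$, whence $z=\sum_i z^i\in\ca{D}_f(w)$. (This step can also simply be read off from the conservative-mapping characterization used above.)
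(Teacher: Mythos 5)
The paper itself gives no proof of this proposition: it is imported verbatim from \cite{bolte2021conservative}, so the only comparison available is with the cited source, and your main line of argument is essentially the standard one used there. Routing the claim through the conservative-mapping characterization (intersect the finitely many full-measure sets on which each $\ca{D}_{f_i}$ realizes the derivative of $f_i\circ\gamma$, then add), or equivalently your ``slicker'' variant via the chain rule of Proposition \ref{Prop_Conservative_Chain_rule} applied to the diagonal map composed with $(z_1,\dots,z_n)\mapsto\sum_i f_i(z_i)$, correctly yields $\frac{\mathrm{d}}{\mathrm{d}t}(f\circ\gamma)(t)=\inner{\gamma'(t),v}$ for every $v\in\ca{D}_f(\gamma(t))$ and a.e.\ $t$; this gives the vanishing loop integrals and the path-integral identity for the potential, and nonemptiness and compactness of the values are immediate. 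Up to that point the proposal is correct.

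The step you yourself flagged --- closed graph of the Minkowski sum, which is genuinely part of the claim since ``potential function for $\ca{D}_f$'' in Definition \ref{Defin_conservative_field_path_int} presupposes that $\ca{D}_f$ is a conservative field in the sense of Definition \ref{Defin_conservative_field} --- is the one place where your justification fails: local boundedness of a conservative field is \emph{not} a consequence of the local Lipschitz continuity of its potential. A conservative field may blow up along a null set without affecting any loop integral: on $\bb{R}$ take $f_1\equiv 0$, $\ca{D}_{f_1}(x)=\{0\}$ for $x\notin\{1/k\}$ and $\ca{D}_{f_1}(1/k)=\{0,\,k+1\}$; this has compact values and closed graph, and its loop integrals vanish because $\gamma'=0$ a.e.\ on the preimage of the null set $\{1/k\}$, so it is a conservative field for the Lipschitz function $0$ yet is unbounded near the origin. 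Pairing it with $\ca{D}_{f_2}(1/k)=\{0,\,-k\}$ (also conservative for $0$) gives $1\in(\ca{D}_{f_1}+\ca{D}_{f_2})(1/k)$ for every $k$ while $(\ca{D}_{f_1}+\ca{D}_{f_2})(0)=\{0\}$, so the sum does not have closed graph: exactly the cancellation your boundedness argument was meant to exclude. Your parenthetical fallback does not help either, since Definition \ref{Defin_conservative_mapping} imposes no topological requirement on the mapping, so the conservative-mapping property cannot deliver graph closedness. The repair is cheap: assume each $\ca{D}_{f_i}$ is locally bounded (automatic for Clarke subdifferentials of Lipschitz functions and for every field used in this paper, e.g.\ under Assumption \ref{Assumption_2}), after which your subsequence extraction is valid; without such a hypothesis the closed-graph part of the statement should simply be attributed to, or qualified as in, the cited reference rather than derived from Lipschitzness of the potentials.
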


\subsection{Additional assumption and stationarity}

In this subsection, we present the basic assumptions on \ref{Prob_Ori} as well as the definition of its stationarity. In the rest of this paper, we assume the objective function $f$ to be a potential function for a certain conservative field.

\begin{assumpt}
	\label{Assumption_2}
	$f$ is a potential function of a conservative set-valued field $\ca{D}_f: \Rn\times \Rp  \rightrightarrows \Rn\times \Rp $, which has convex values, and satisfies
	$$\sup\limits_{x \in \Rn,~ y\in \Rp, ~\xi \in \ca{D}_f(x, y)}~ \norm{\xi} \leq M_f,$$
	for some constant $M_f >0$.
\end{assumpt}

The following remark illustrates that Assumption \ref{Assumption_2} is general enough to cover most applications of \ref{Prob_Ori}. 

\begin{rmk}
	\label{Rmk_stratifiable_functions}
	It is worth mentioning that any Clarke regular function is a potential function for some conservative fields \cite{davis2020stochastic}. However, the Clarke regularity is too restrictive in practice, which excludes some important applications of \ref{Prob_Ori}, in particular, training the neural network built from nonsmooth activation functions. 
	
	To this end, \cite{davis2020stochastic} reviews the concept of Whitney stratifiable functions, and prove that any locally Lipschitz function $f:\Rn \times \Rp \to \bb{R}$ that is Whitney $C^1$-stratifiable is a potential function for $\partial f$ in \cite[Theorem 5.8]{davis2020stochastic}. 
	Whitney stratifiable functions are general enough to cover several important classes of functions, including semi-algebraic functions, and semi-analytic functions.
	
	Additionally, several recent works \cite{bolte2021conservative,castera2021inertial,bolte2021nonsmooth} focus on the optimization of definable functions (i.e.,  functions that are definable in an $o$-minimal structure \cite{van1996geometric,davis2020stochastic}), which are all Whitney $C^s$-stratifiable functions for any $s \geq 1$ \cite{van1996geometric}. The finite summation and composition of definable functions are also definable, hence various nonsmooth functions can be easily recognized as definable functions. As shown in \cite{wilkie1996model,davis2020stochastic,bolte2021conservative}, the finite composition among semi-algebraic functions, $\exp$ and $\log$ is definable. Therefore, most common activation functions and loss functions, including sigmoid, hyperbolic tangent, softplus, ReLU \cite{agarap2018deep}, Leaky-ReLU \cite{maas2013rectifier}, piecewise polynomial activations, $\ell_1$-loss, MSE loss, hinge loss, logistic loss and cross-entropy loss are all definable in some o-minimal structures. Furthermore, for any nonsmooth deep neural network built from definable loss functions and activation functions, its objective function is also definable, hence is a potential function for a certain conservative field (e.g., its Clarke subdifferential). 
\end{rmk}

	\begin{rmk}
		Assumption \ref{Assumption_2} implies that $f$ is $M_f$-Lipschitz continuous over $\Rn\times \Rp$, as illustrated in \cite[Remark 3(d)]{bolte2021nonsmooth}. 
	\end{rmk}

Based on Assumption \ref{Assumption_1} and Assumption \ref{Assumption_2}, we make the following definitions on the stationarity of \ref{Prob_Ori} and \ref{Prob_Pen}. 

\begin{defin}[\cite{clarke1990optimization}]
	\label{Defin_FOSP_Ori}
	For any given $(x, y) \in \Rn\times \Rp$, we say $(x, y)$ is a first-order stationary point of \ref{Prob_Ori} if there exists $(d_x, d_y) \in \partial f(x, y)$ such that 
	\begin{equation*}
		\left\{\begin{aligned}
			&0 = d_x - \nabla_{xy}^2 g(x, y) \nabla_{yy}^2 g(x, y)^{-1} d_y , \\
			&0 = \nabla_y g(x, y). 
		\end{aligned}\right.
	\end{equation*}
\end{defin}

Similarly, the stationarity of \ref{Prob_Pen} can be stated in the following definition.
\begin{defin}
	\label{Defin_FOSP_Pen}
	For any given $(x, y) \in \Rn\times \Rp$, we say that $(x, y)$ is a first-order stationary point of \ref{Prob_Pen} if $0 \in \partial h(x, y)$.
\end{defin}

On the other hand, we can characterize the stationarity of \ref{Prob_Ori} from the perspective of conservative field. 

\begin{defin}
	\label{Defin_FOSP_Ori_field}
	Suppose $f$ is a potential function admitted by a convex-valued conservative field $\ca{D}_f$. Then for any given $(x, y) \in \Rn \times \Rp$, we say that $(x, y)$ is a $\ca{D}_f$-stationary point of \ref{Prob_Ori} if there exists $(d_x, d_y) \in \ca{D}_f(x, y)$ such that 
	\begin{equation*}
		\left\{\begin{aligned}
			&0 = d_x - \nabla_{xy}^2 g(x, y) \nabla_{yy}^2 g(x, y)^{-1} d_y, \\
			&0 = \nabla_y g(x, y). 
		\end{aligned}\right.
	\end{equation*}
\end{defin}

It directly follows from Definition \ref{Defin_FOSP_Ori} and Definition \ref{Defin_FOSP_Ori_field} that all the $\partial f$-stationary points of \ref{Prob_Ori} are its first-order stationary points.

\begin{defin}
	\label{Defin_FOSP_Pen_field}
	Suppose $h$ in \ref{Prob_Pen} is a potential function admitted by a convex-valued conservative field $\Dh$,  we say  that $(x, y)\in \Rn \times \Rp$ is a $\Dh$-stationary point of \ref{Prob_Pen} if 
	$0 \in \ca{D}_h(x, y)$.
\end{defin}

	\begin{rmk}
		\label{Rmk_cf_subdifferential}
		$\D_f$ is a generalization of the Clarke subdifferential of $f$, whose expression depends on how to achieve the ``subdifferential'' of $f(x,y)$ \cite{bolte2021conservative,bolte2021nonsmooth}.  As illustrated in \cite{bolte2021nonsmooth}, the conservative field $\D_f$ is not unique and may differs from $\partial f$ in a dense set, hence may lead to infinitely many spurious stationary points for \ref{Prob_Ori}. However, we should keep in mind that the most important case for us is $\ca{D}_f = \partial f$. In the following remark, we discuss how to approximately evaluate $\partial f$ in practice. 
		
		Although in some cases, directly computing one element from $\partial f(x, y)$ may be intractable \cite{bolte2021nonsmooth}, there are already several randomized approaches \cite{burke2002approximating,burke2005a,duchi2012randomized,yousefian2012stochastic,nesterov2017random,burke2020gradient,lin2022gradient} developed for approximately evaluating one element for $\partial f(x, y)$ in practice.  
		
		Some existing approaches approximate $\partial f(x, y)$ by random sampling of gradients \cite{burke2002approximating,burke2005a,burke2020gradient}. In these approaches, with a given radius $\delta$, we randomly sample $\{(x_1, y_1), ...,(x_s, y_s)\} \subset \bb{B}_{\delta}(x, y)$. Since $f$ is differentiable at $\{(x_1, y_1), ...,(x_s, y_s)\}$ almost surely, let $C_{\delta} = \conv \{ \nabla f(x_i, y_i): 1\leq i\leq s \} \subset \partial_{\delta} f(x, y)$, and it holds from Proposition \ref{Prop_Eps_Subdifferential} and \cite[Theorem 3.1]{burke2002approximating}
		that   $ \lim_{\delta \to 0}\mathrm{dist}(\partial f(x, y), C_{\delta}) = 0$. By choosing one element from $C_{\delta}$, we get an approximated evaluation for an element in $\partial f(x, y)$. 
		
		Furthermore, some recent works approximate $\partial f(x, y)$ by the randomized smoothing approaches \cite{duchi2012randomized,yousefian2012stochastic,nesterov2017random,lin2022gradient}. In these approaches, we first uniformly sample $(\zeta_x, \zeta_y) \in \bb{B}_{ {\delta} }(0)$, and approximate $\partial f(x, y)$ by 
		\begin{equation*}
			\tilde{\partial}_{\delta} f(x, y; \zeta_x, \zeta_y) := \frac{n+p}{2 \delta} (f(x + \delta \zeta_x, y + \delta\zeta_y) - f(x - \delta \zeta_x, y - \delta \zeta_y)) \cdot \left[\begin{smallmatrix}
				&\zeta_x \\
				&\zeta_y \\
			\end{smallmatrix}\right]. 
		\end{equation*}
		From \cite[Theorem 3.1, Lemma D.1]{lin2022gradient}, it holds that 
		\begin{equation*}
			\bb{E}_{\zeta_x, \zeta_y} [\tilde{\partial}_{\delta} f(x, y; \zeta_x, \zeta_y)] \in  \partial_{\delta} (x, y), ~ \text{ and }~ \bb{E}_{\zeta_x, \zeta_y} [\norm{\tilde{\partial}_{\delta} f(x, y; \zeta_x, \zeta_y)}^2] \leq 16\sqrt{2\pi}(n+p)M_f^2. 
		\end{equation*}
		Then Proposition \ref{Prop_Eps_Subdifferential} illustrates that $\lim\limits_{\delta \to 0}\mathrm{dist}\left(\partial f(x, y), \bb{E}_{\zeta_x, \zeta_y} [\tilde{\partial}_{\delta} f(x, y; \zeta_x, \zeta_y)] \right) = 0$. Therefore, $\tilde{\partial}_{\delta} f(x, y; \zeta_x, \zeta_y)$ approximates $\partial f(x, y)$ with noises. 
		
	\end{rmk}

\section{Theoretical properties}

\subsection{Equivalence: Clarke subdifferential}
\label{Subsection_equivalence}
In this subsection, we study the equivalence between \ref{Prob_Ori} and \ref{Prob_Pen} based on the Clarke subdifferential. 
We first define 
\begin{align}
	\JAx(x, y) &:= -\nabla_{xy}^2 g(x, y) \nabla_{yy}^2g(x, y)^{-1} + \nabla_{xyy}^3 g(x, y)[\nabla_{yy}^2g(x, y)^{-1}\nabla_y g(x, y)] \nabla_{yy}^2g(x, y)^{-1}, \label{Eq_JAX}\\
	\JAy(x, y) &:=  \nabla_{yyy}^3 g(x, y)[\nabla_{yy}^2g(x, y)^{-1}\nabla_y g(x, y)] \nabla_{yy}^2g(x, y)^{-1} \label{Eq_JAY}.
\end{align}
Then the following proposition characterizes the expression of $\partial h(x, y)$ for any $(x, y) \in \Rn\times \Rp$. 

\begin{prop}
	\label{Prop_gradient_expression}
	For any $x \in \Rn$ and $y \in \Rp$, it holds that 
	\begin{equation*}
		\partial h(x,y) \subseteq \left\{   \left[\begin{smallmatrix}
			&d_x + \JAx(x,y) d_y + \beta \nabla_{xy}^2 g(x, y) \nabla_y g(x, y)\\
			&\JAy(x,y)d_y + \beta \nabla_{yy}^2 g(x, y) \nabla_y g(x, y) \\
		\end{smallmatrix}\right] :   \left[\begin{smallmatrix}
			&d_x \\
			&d_y \\
		\end{smallmatrix}\right] \in \partial f(x, \A(x, y)) \right\}.
	\end{equation*}
	Here the equality holds when $f$ is Clarke regular. 
\end{prop}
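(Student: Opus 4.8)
The plan is to exploit the additive structure $h = h_1 + h_2$ with $h_1(x,y) := f(x,\A(x,y))$ and $h_2(x,y) := \tfrac{\beta}{2}\norm{\nabla_y g(x,y)}^2$, and to treat the two summands separately through the Clarke calculus. Since $g$ is $\mu$-strongly convex in $y$ (Assumption \ref{Assumption_1}(2)), the matrix $\nabla_{yy}^2 g(x,y)$ is invertible everywhere, so $\A$ — and hence the mapping $\Psi(x,y) := (x,\A(x,y))$ — is well defined on $\Rn\times\Rp$. Moreover, $\nabla_y g$ is $C^1$ by Assumption \ref{Assumption_1}(3)--(4), $\nabla_{yy}^2 g$ is $C^1$ by Assumption \ref{Assumption_1}(5), and matrix inversion is smooth on the set of invertible matrices; therefore $\A$, and thus $\Psi$, is continuously differentiable on $\Rn\times\Rp$.

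First I would handle the smooth summand $h_2$ directly: it is continuously differentiable under Assumption \ref{Assumption_1}, with $\nabla_x h_2(x,y) = \beta\,\nabla_{xy}^2 g(x,y)\nabla_y g(x,y)$ and $\nabla_y h_2(x,y) = \beta\,\nabla_{yy}^2 g(x,y)\nabla_y g(x,y)$ by the ordinary chain rule. Next, for the nonsmooth summand $h_1 = f\circ\Psi$, I would invoke the Clarke chain rule for the composition of a locally Lipschitz function with a continuously (hence strictly) differentiable map \cite{clarke1990optimization}, which gives $\partial h_1(x,y) \subseteq \{\, J\Psi(x,y)^\top w : w\in\partial f(x,\A(x,y)) \,\}$, with equality whenever $f$ is Clarke regular at $(x,\A(x,y))$ (in which case $h_1$ is itself regular there). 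Finally, the Clarke sum rule — which holds with equality because $h_2$ is continuously differentiable, and the sum of a regular function with a $C^1$ function is again regular — yields $\partial h(x,y) \subseteq \partial h_1(x,y) + \nabla h_2(x,y)$, with equality in the regular case.

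The remaining, and main, task is to compute the Jacobian $J\Psi(x,y)$ and verify that $J\Psi(x,y)^\top w$ equals $\big( d_x + \JAx(x,y) d_y,\ \JAy(x,y) d_y \big)$ for $w=(d_x,d_y)$. Writing $M := \nabla_{yy}^2 g(x,y)$ and $r := \nabla_y g(x,y)$, so that $\A(x,y) = y - M^{-1} r$, one differentiates using $\partial_{x_i} M^{-1} = -M^{-1}(\partial_{x_i} M) M^{-1}$ and $\partial_{y_j} M^{-1} = -M^{-1}(\partial_{y_j} M) M^{-1}$, observing that the term coming from $\partial_{y_j} r = M e_j$ produces exactly the identity block that cancels the leading $I_p$ of $\partial_y\A$. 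The block structure $J\Psi = \left(\begin{smallmatrix} I_n & 0 \\ J_x\A & J_y\A \end{smallmatrix}\right)$ then yields the stated $x$- and $y$-components after transposition, and adding $\nabla h_2$ completes the expression.

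The one point requiring care is the bookkeeping with the third-order derivatives. One must use the symmetry of the trilinear forms $\nabla_{yyy}^3 g$ and $\nabla_{xyy}^3 g$ (equivalently $\nabla_{yyx}^3 g$, via equality of mixed partials) to move the direction $M^{-1} r$ inside the bracket, and to identify $(J_y\A)^\top$ with $\JAy(x,y)$ and $(J_x\A)^\top$ with $\JAx(x,y)$; in particular one must track on which side the factor $M^{-1} = \nabla_{yy}^2 g(x,y)^{-1}$ lands, since although $M$ and the forms $\nabla^3 g[\cdot]$ are symmetric, the composition order matters before transposing. Once these identifications are carried out, the proposition follows by assembling the chain-rule inclusion, the smooth gradient of $h_2$, and the sum rule, with the equality clause inherited from the regularity of $f$.
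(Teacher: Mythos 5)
Your proof is correct, and it is exactly the ``direct calculation'' the paper alludes to but omits: split $h$ into $f\circ\Psi$ with $\Psi(x,y)=(x,\A(x,y))$ plus the smooth penalty, apply Clarke's chain rule for a locally Lipschitz outer function composed with a $C^1$ (strictly differentiable) inner map and the sum rule with a $C^1$ summand (both with equality under Clarke regularity of $f$), and compute $J\Psi$, where your identifications $(J_y\A)^\top d_y=\JAy(x,y)d_y$ and $(J_x\A)^\top d_y=\JAx(x,y)d_y$ indeed check out via $\partial M^{-1}=-M^{-1}(\partial M)M^{-1}$, the cancellation of $I_p$ by $M^{-1}\nabla^2_{yy}g$, and the symmetry of the third-order trilinear forms. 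No gaps; this matches the intended argument.
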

Proposition \ref{Prop_gradient_expression} can be verified through direct calculation, hence we omit its proof for simplicity.

\begin{prop}
	\label{Prop_stationary_feasible}
	For any $(x, y) \in \Rn\times \Rp$, suppose $(x, y) \in \M$ is a first-order stationary point of \ref{Prob_Pen}, then $(x,y)$ is a first-order stationary point of \ref{Prob_Ori}.
	
	Furthermore, when $f$ is Clarke regular, then for any given $(x, y) \in \M$, $(x,y)$ is a first-order stationary point of \ref{Prob_Ori} if and only if it is a first-order stationary point of \ref{Prob_Pen}. 
\end{prop}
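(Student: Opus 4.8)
The plan is to specialize the inclusion in Proposition~\ref{Prop_gradient_expression} to points of $\M$ and then simply compare with the stationarity definitions. First I would fix $(x,y)\in\M$, so that $\nabla_y g(x,y)=0$ by the definition of $\M$. This immediately gives $\A(x,y)=y$, hence $\partial f(x,\A(x,y))=\partial f(x,y)$. Next I would substitute $\nabla_y g(x,y)=0$ into the formulas \eqref{Eq_JAX} and \eqref{Eq_JAY}: each surviving term there carries the factor $\nabla_{yy}^2 g(x,y)^{-1}\nabla_y g(x,y)$ inside the third-order derivatives $\nabla^3_{xyy}g$ and $\nabla^3_{yyy}g$, so those terms vanish and one is left with $\JAx(x,y)=-\nabla_{xy}^2 g(x,y)\nabla_{yy}^2 g(x,y)^{-1}$ and $\JAy(x,y)=0$. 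The penalty contributions $\beta\nabla_{xy}^2 g(x,y)\nabla_y g(x,y)$ and $\beta\nabla_{yy}^2 g(x,y)\nabla_y g(x,y)$ likewise vanish on $\M$. Feeding all of this into Proposition~\ref{Prop_gradient_expression} collapses the inclusion at such a point to
\begin{equation*}
	\partial h(x,y)\subseteq\left\{\left[\begin{smallmatrix} d_x-\nabla_{xy}^2 g(x,y)\,\nabla_{yy}^2 g(x,y)^{-1}d_y\\ 0\end{smallmatrix}\right]\ :\ \left[\begin{smallmatrix}d_x\\ d_y\end{smallmatrix}\right]\in\partial f(x,y)\right\},
\end{equation*}
with equality whenever $f$ is Clarke regular.

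For the first assertion I would assume $(x,y)\in\M$ with $0\in\partial h(x,y)$; the displayed inclusion then produces some $(d_x,d_y)\in\partial f(x,y)$ with $d_x-\nabla_{xy}^2 g(x,y)\nabla_{yy}^2 g(x,y)^{-1}d_y=0$, and since $\nabla_y g(x,y)=0$ holds on $\M$, this is exactly the system in Definition~\ref{Defin_FOSP_Ori}, so $(x,y)$ is a first-order stationary point of \ref{Prob_Ori}. For the equivalence under Clarke regularity I would run the same computation in reverse: given $(d_x,d_y)\in\partial f(x,y)$ solving the two equations of Definition~\ref{Defin_FOSP_Ori} at a point of $\M$, the matching element of the now-exact set above equals $\left[d_x-\nabla_{xy}^2 g(x,y)\nabla_{yy}^2 g(x,y)^{-1}d_y;\,0\right]=0$, so $0\in\partial h(x,y)$, i.e.\ $(x,y)$ is a first-order stationary point of \ref{Prob_Pen}; combined with the first assertion this yields the stated ``if and only if''.

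I do not expect a genuine obstacle here: the whole argument is the substitution $\nabla_y g(x,y)=0$ into Proposition~\ref{Prop_gradient_expression}, which may be assumed. The one step that deserves a little care is the simplification of $\JAx$ and $\JAy$ on $\M$ --- one must verify that feasibility annihilates precisely the third-derivative terms in \eqref{Eq_JAX}--\eqref{Eq_JAY}, and that $\A$ restricts to the identity on $\M$ so that the relevant subdifferential of $f$ is taken at $(x,y)$ itself rather than at $\A(x,y)$. After that, both directions are immediate comparisons with Definitions~\ref{Defin_FOSP_Ori} and~\ref{Defin_FOSP_Pen}, and the asymmetry (one implication in general, equivalence under Clarke regularity) simply mirrors the inclusion-versus-equality dichotomy already present in Proposition~\ref{Prop_gradient_expression}.
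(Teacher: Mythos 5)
Your proposal is correct and follows essentially the same route as the paper's own proof: both specialize Proposition~\ref{Prop_gradient_expression} to points of $\M$ (where $\nabla_y g(x,y)=0$, $\A(x,y)=y$, and the third-order and penalty terms vanish) and then read off the two implications from the inclusion in general and the equality under Clarke regularity. Your write-up merely makes explicit the simplification of $\JAx$, $\JAy$ on $\M$ that the paper leaves implicit.
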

\begin{proof}
	Since $(x,y) \in \M$ is a first-order stationary point of \ref{Prob_Pen},  it follows from the optimality conditions of \ref{Prob_Pen}  that $0 \in \partial h(x, y)$. Together with the fact that $0 = \nabla_{y} g(x, y)$ and Proposition \ref{Prop_gradient_expression},  there exists $(d_x, d_y) \in \partial f(x, y)$ such that $
	0 = d_x - \nabla_{xy}^2 g(x, y) \nabla_{yy}^2 g(x, y)^{-1} d_y$, 
	which coincides with the optimality conditions of \ref{Prob_Ori}. Therefore, we obtain that $(x,y)$ is a first-order stationary point of \ref{Prob_Ori}.

	Furthermore, when $f$ is assumed to be Clarke regular, and  $(x, y) \in \M$ is a first-order stationary point of \ref{Prob_Ori}, Proposition \ref{Prop_gradient_expression} illustrates that there exists $(d_x, d_y) \in \partial f(x, y)$ such that $
	0 = d_x - \nabla_{xy}^2 g(x, y) \nabla_{yy}^2 g(x, y)^{-1} d_y \in \partial h(x, y)$. 
	Therefore, $(x, y)$ is a first-order stationary point of \ref{Prob_Pen}. 
	This completes the proof. 
\end{proof}

Proposition \ref{Prop_stationary_feasible} illustrates that any first-order stationary point of \ref{Prob_Pen} on $\M$ is also a first-order stationary point of \ref{Prob_Ori}. In the rest of this subsection, we aim to show that with a sufficiently large penalty parameter $\beta$, any first-order stationary point of \ref{Prob_Pen} lies on $\M$. 

\begin{lem}
	\label{Le_lip_constant}
	The Lipschitz constant for $\nabla_{yy}^2 g(x, y)^{-1}$ is no greater than $\frac{Q_g}{\mu^2}$. 
\end{lem}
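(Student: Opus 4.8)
The plan is to reduce the statement to the standard resolvent (Woodbury-type) identity for matrix inverses, together with the two bounds already available from Assumption~\ref{Assumption_1}. First I would record that for every $(x,y) \in \Rn \times \Rp$, Assumption~\ref{Assumption_1}(2) gives $\nabla_{yy}^2 g(x,y) \succeq \mu I_p$, so this Hessian block is symmetric positive definite, hence invertible, and its inverse satisfies $\norm{\nabla_{yy}^2 g(x,y)^{-1}} \leq \frac{1}{\mu}$ (the largest eigenvalue of the inverse is the reciprocal of the smallest eigenvalue of $\nabla_{yy}^2 g(x,y)$, which is at least $\mu$). This makes the map $(x,y) \mapsto \nabla_{yy}^2 g(x,y)^{-1}$ well-defined on all of $\Rn \times \Rp$, so asking for its Lipschitz constant makes sense.

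Next I would fix two points $(x_1,y_1), (x_2,y_2) \in \Rn \times \Rp$, abbreviate $A := \nabla_{yy}^2 g(x_1,y_1)$ and $B := \nabla_{yy}^2 g(x_2,y_2)$, and apply the elementary identity $A^{-1} - B^{-1} = A^{-1}(B - A)B^{-1}$, valid for any invertible $A,B$. Taking operator norms and using submultiplicativity yields
\begin{equation*}
	\norm{A^{-1} - B^{-1}} \leq \norm{A^{-1}}\,\norm{B - A}\,\norm{B^{-1}} \leq \frac{1}{\mu}\cdot \norm{B-A} \cdot \frac{1}{\mu}.
\end{equation*}
Then I would invoke Assumption~\ref{Assumption_1}(4), the $Q_g$-Lipschitz continuity of $\nabla_{yy}^2 g$, to bound $\norm{B - A} = \norm{\nabla_{yy}^2 g(x_2,y_2) - \nabla_{yy}^2 g(x_1,y_1)} \leq Q_g \norm{(x_2,y_2) - (x_1,y_1)}$, and combine the two displays to conclude $\norm{\nabla_{yy}^2 g(x_1,y_1)^{-1} - \nabla_{yy}^2 g(x_2,y_2)^{-1}} \leq \frac{Q_g}{\mu^2}\norm{(x_1,y_1)-(x_2,y_2)}$, which is exactly the claimed bound.

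There is no real obstacle here: the only points requiring a line of justification are the positive-definiteness bound $\norm{\nabla_{yy}^2 g(x,y)^{-1}} \leq 1/\mu$ (immediate from the eigenvalue characterization) and the resolvent identity itself (verified by multiplying out $A(A^{-1} - B^{-1})B = B - A$). Everything else is a one-step norm estimate, so the proof is short and self-contained.
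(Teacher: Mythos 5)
Your proof is correct, and it takes a genuinely different route from the paper's. The paper argues locally and differentially: it uses the first-order expansion $\norm{(A+tE)^{-1} - (A^{-1} - tA^{-1}EA^{-1})} = \ca{O}(t^2)$ to compute the directional derivative of $(x,y)\mapsto \nabla_{yy}^2 g(x,y)^{-1}$ separately along $x$- and $y$-directions, and then bounds these derivatives by $\frac{Q_g}{\mu^2}\norm{d_x}$ and $\frac{Q_g}{\mu^2}\norm{d_y}$ using the third-order derivative bounds $\norm{\nabla_{yyx}^3 g(x,y)[d_x]}\leq Q_g\norm{d_x}$ and $\norm{\nabla_{yyy}^3 g(x,y)[d_y]}\leq Q_g\norm{d_y}$, i.e.\ it leans on the differentiability of $\nabla_{yy}^2 g$ (Assumption \ref{Assumption_1}(5)) and then reads off the Lipschitz constant from the derivative bound. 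You instead argue globally: the resolvent identity $A^{-1}-B^{-1}=A^{-1}(B-A)B^{-1}$, the eigenvalue bound $\norm{\nabla_{yy}^2 g(x,y)^{-1}}\leq 1/\mu$ from strong convexity, and the $Q_g$-Lipschitz continuity of $\nabla_{yy}^2 g$ (Assumption \ref{Assumption_1}(4)) immediately give the bound between any two points $(x_1,y_1)$ and $(x_2,y_2)$, with joint perturbations in $(x,y)$ handled in one step. Your approach is more elementary and slightly stronger in hypotheses used: it needs no third-order derivatives, no asymptotic expansion, and no smoothness of the Hessian beyond Lipschitz continuity, and it avoids the mild bookkeeping the paper's version leaves implicit (passing from separate $x$- and $y$-directional bounds to a Lipschitz estimate in the joint variable). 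Both arguments produce the same constant $\frac{Q_g}{\mu^2}$.
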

\begin{proof}
	Firstly, notice that $\norm{(A+t E)^{-1} - \left(A^{-1} - t A^{-1}EA^{-1}\right)} = \ca{O}(t^2)$ holds for any symmetric nonsingular matrix $A$ and any square symmetric matrix $E$.  Therefore, the following inequality holds for any $d_x \in \Rn$
	\begin{equation*}
		\begin{aligned}
			&\norm{\nabla_{yy}^2 g(x+ t d_x, y)^{-1}  - \nabla_{yy}^2 g(x, y)^{-1}}\\
			={}  &t \norm{\nabla_{yy}^2 g(x, y)^{-1} \nabla_{yyx}^3 g(x, y) [d_x] \nabla_{yy}^2 g(x, y)^{-1}} + \ca{O}(t^2) \leq \frac{Q_g}{\mu^2} t\norm{d_x} + \ca{O}(t^2).
		\end{aligned}
	\end{equation*}
	Similarly, for any $d_y \in \Rp$, it holds that 
	\begin{equation*}
		\begin{aligned}
			&\norm{\nabla_{yy}^2 g(x, y+ t d_y)^{-1}  - \nabla_{yy}^2 g(x, y)^{-1}}\\
			={}  &t \norm{\nabla_{yy}^2 g(x, y)^{-1} \nabla_{yyy}^3 g(x, y) [d_y] \nabla_{yy}^2 g(x, y)^{-1}} + \ca{O}(t^2) \leq  \frac{Q_g}{\mu^2} t\norm{d_y} + \ca{O}(t^2).
		\end{aligned}
	\end{equation*}
	Therefore, we can conclude that the Lipschitz constant for $\nabla_{yy}^2 g(x, y)^{-1}$ is no greater than $\frac{Q_g}{\mu^2}$. 
\end{proof}

\begin{lem}
	\label{Le_secondorder_descrease}
	For any given $(x, y) \in \Rn\times \Rp$, it holds that 
	\begin{equation*}
		\norm{\ysol(x) - \A(x, y)} \leq \frac{Q_g}{2\mu^3} \norm{\nabla_y g(x, y)}^2. 
	\end{equation*}
\end{lem}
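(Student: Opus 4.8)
The plan is to recognize that $\A(x,y)$ is exactly one Newton step for the lower-level problem $\min_y g(x,y)$ started from $y$, and then to exploit the standard quadratic-convergence estimate for Newton's method under $\mu$-strong convexity and $Q_g$-Lipschitz Hessian. Concretely, fix $x$ and write $h_x(y) := \nabla_y g(x,y)$, so that $\ysol(x)$ is the unique root of $h_x$ and $\A(x,y) = y - (\nabla_{yy}^2 g(x,y))^{-1} h_x(y)$. The key quantity to control is $h_x(\A(x,y))$: by the fundamental theorem of calculus along the segment from $y$ to $\A(x,y)$ one gets
\begin{equation*}
	h_x(\A(x,y)) = h_x(y) + \nabla_{yy}^2 g(x,y)\,(\A(x,y) - y) + \int_0^1 \left( \nabla_{yy}^2 g(x, y + s(\A(x,y)-y)) - \nabla_{yy}^2 g(x,y) \right)(\A(x,y)-y)\, \mathrm{d}s,
\end{equation*}
and the first two terms cancel by the definition of $\A(x,y)$. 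Using Assumption \ref{Assumption_1}(4) (the $Q_g$-Lipschitz continuity of $\nabla_{yy}^2 g$) to bound the integrand, this leaves $\norm{h_x(\A(x,y))} \le \frac{Q_g}{2}\norm{\A(x,y)-y}^2$. Since $\A(x,y)-y = -(\nabla_{yy}^2 g(x,y))^{-1}\nabla_y g(x,y)$ and $\nabla_{yy}^2 g(x,y) \succeq \mu I_p$, we have $\norm{\A(x,y)-y} \le \frac{1}{\mu}\norm{\nabla_y g(x,y)}$, hence $\norm{\nabla_y g(x,\A(x,y))} \le \frac{Q_g}{2\mu^2}\norm{\nabla_y g(x,y)}^2$.

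Next I would convert this bound on $\nabla_y g(x,\A(x,y))$ into the desired bound on $\norm{\ysol(x) - \A(x,y)}$. Since $g(x,\cdot)$ is $\mu$-strongly convex, its gradient $h_x$ is strongly monotone: $\inner{h_x(u) - h_x(v), u - v} \ge \mu\norm{u-v}^2$ for all $u,v$. Taking $u = \A(x,y)$ and $v = \ysol(x)$, and using $h_x(\ysol(x)) = 0$ together with Cauchy–Schwarz, gives $\mu\norm{\A(x,y) - \ysol(x)}^2 \le \inner{h_x(\A(x,y)), \A(x,y) - \ysol(x)} \le \norm{h_x(\A(x,y))}\,\norm{\A(x,y)-\ysol(x)}$, so $\norm{\A(x,y) - \ysol(x)} \le \frac{1}{\mu}\norm{h_x(\A(x,y))}$. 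Chaining this with the previous paragraph yields $\norm{\ysol(x) - \A(x,y)} \le \frac{1}{\mu}\cdot\frac{Q_g}{2\mu^2}\norm{\nabla_y g(x,y)}^2 = \frac{Q_g}{2\mu^3}\norm{\nabla_y g(x,y)}^2$, which is exactly the claim.

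I do not expect a genuine obstacle here — this is a textbook Newton-step estimate — but the step requiring the most care is the integral remainder bound: one must be slightly careful that $\nabla_{yy}^2 g$ is evaluated at points $y + s(\A(x,y)-y)$ with the same first argument $x$, so that only Assumption \ref{Assumption_1}(4)'s Lipschitz bound in the $y$-argument is invoked, and that the Lipschitz constant $Q_g$ indeed controls $\norm{\nabla_{yy}^2 g(x,y_1) - \nabla_{yy}^2 g(x,y_2)} \le Q_g\norm{y_1 - y_2}$ (which follows from Assumption \ref{Assumption_1}(4), or alternatively from the bound $\norm{\nabla_{yyy}^3 g(x,y)[d_y]} \le Q_g\norm{d_y}$ recorded in the preliminaries). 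Everything else is routine strong-convexity and operator-norm bookkeeping. Note that Lemma \ref{Le_lip_constant} is not actually needed for this particular estimate, though it is thematically adjacent; the proof rests only on Assumption \ref{Assumption_1}(2) and (4).
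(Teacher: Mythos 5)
Your proposal is correct and follows essentially the same route as the paper: both first establish $\norm{\nabla_y g(x,\A(x,y))} \leq \frac{Q_g}{2\mu^2}\norm{\nabla_y g(x,y)}^2$ via a second-order expansion in which the zeroth- and first-order terms cancel by the definition of $\A$, and then convert this to the claimed bound through $\mu$-strong convexity of $g(x,\cdot)$, picking up the extra factor $1/\mu$. The only (cosmetic) difference is that you use the integral-remainder form of Taylor's theorem with the $Q_g$-Lipschitz Hessian, whereas the paper applies the mean-value theorem to $v\tp\nabla_y g(x,\cdot)$ and bounds the resulting $\nabla_{yyy}^3 g(x,\xi)$ term by $Q_g$.
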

\begin{proof}
	For any $v \in \bb{R}^p$, it follows from the mean-value theorem that there exists $\xi \in \Rp$ such that 
	\begin{equation*}
		\begin{aligned}
			&v\tp \nabla_y g(x, \A(x, y)) = v\tp \nabla_y g(x, y - \nabla_{yy}^2g(x, y)^{-1}  \nabla_y g(x, y))\\
			={}&  v\tp \nabla_y g(x, y) - v\tp \nabla_{yy}^2g(x, y) \nabla_{yy}^2g(x, y)^{-1}  \nabla_y g(x, y) \\
			&+ \frac{1}{2}v\tp \nabla_{yyy}^3g(x, \xi)[\nabla_{yy}^2g(x, y)^{-1}  \nabla_y g(x, y)  ]\nabla_{yy}^2g(x, y)^{-1}  \nabla_y g(x, y) \\
			\leq{}& \frac{Q_g}{2} \norm{v}\norm{\nabla_{yy}^2g(x, y)^{-1}  \nabla_y g(x, y) }^2
			\leq \frac{Q_g}{2\mu^2}\norm{v} \norm{\nabla_y g(x, y)}^2. 
		\end{aligned}
	\end{equation*}
	
	As a result,  it holds that $ \norm{\nabla_y g(x, \A(x, y))} \leq \frac{Q_g}{2\mu^2}\norm{\nabla_y g(x, y)}^2$.
	Then from the fact that $g(x, y)$ is $\mu$-strongly convex with respect to $y$, we obtain that 
	\begin{equation*}
		\norm{\A(x, y) - \ysol(x)} 
		\leq \frac{1}{\mu}\norm{\nabla_y g(x, \A(x, y))-\nabla_y g(x, \ysol(x))}
		\leq \frac{Q_g}{2\mu^3}\norm{\nabla_y g(x, y))}^2.
	\end{equation*}
	This completes the proof. 
\end{proof}

\begin{rmk}
		Lemma \ref{Le_secondorder_descrease} illustrates that for any $(x, y) \in \Rn \times \Rp$, it holds that $\norm{\nabla_{y}g(x, \A(x, y))} = \ca{O}(\norm{\nabla_y g(x, y)}^2)$. As a result, the mapping $(x, y) \mapsto (x, \A(x, y))$ satisfies the Assumption \ref{Assumption_constraint_dissolving}, and hence it is a constraint dissolving mapping for \ref{Prob_Ori}. 
\end{rmk}

\begin{prop}
	Suppose $\beta \geq \frac{M_f Q_g}{\mu^3}$ and $\Phi(x)$ is bounded below in $\bb{R}^n$. Then $h(x, y)$ is bounded below.  
\end{prop}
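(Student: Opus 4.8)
The plan is to compare $h(x,y)$ with $\Phi(x)$ pointwise, using the fact that $\A(x,y)$ is a good approximation of $\ysol(x)$ whenever $\norm{\nabla_y g(x,y)}$ is small, and then absorb the error into the quadratic penalty term. First I would write, for an arbitrary $(x,y)\in\Rn\times\Rp$,
\begin{equation*}
	h(x,y) = f(x,\A(x,y)) + \frac{\beta}{2}\norm{\nabla_y g(x,y)}^2 = \Phi(x) + \Big(f(x,\A(x,y)) - f(x,\ysol(x))\Big) + \frac{\beta}{2}\norm{\nabla_y g(x,y)}^2,
\end{equation*}
using the definition $\Phi(x) = f(x,\ysol(x))$.

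Next I would control the middle term. Since $f$ is $M_f$-Lipschitz continuous over $\Rn\times\Rp$ (Assumption \ref{Assumption_1}(1), also implied by Assumption \ref{Assumption_2}), in particular $y\mapsto f(x,y)$ is $M_f$-Lipschitz for each fixed $x$, so
\begin{equation*}
	f(x,\A(x,y)) - f(x,\ysol(x)) \geq -M_f\,\norm{\A(x,y) - \ysol(x)} \geq -\frac{M_f Q_g}{2\mu^3}\norm{\nabla_y g(x,y)}^2,
\end{equation*}
where the last inequality is exactly Lemma \ref{Le_secondorder_descrease}. Substituting this bound back gives
\begin{equation*}
	h(x,y) \geq \Phi(x) + \frac{1}{2}\Big(\beta - \frac{M_f Q_g}{\mu^3}\Big)\norm{\nabla_y g(x,y)}^2.
\end{equation*}

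Finally, the hypothesis $\beta \geq \frac{M_f Q_g}{\mu^3}$ makes the coefficient of $\norm{\nabla_y g(x,y)}^2$ nonnegative, so $h(x,y)\geq \Phi(x)\geq \inf_{x\in\Rn}\Phi(x) > -\infty$, which is the desired conclusion. Honestly there is no serious obstacle here: the only thing to be careful about is invoking the correct Lipschitz modulus for $f$ in the $y$-argument and citing Lemma \ref{Le_secondorder_descrease} with the right constant; everything else is immediate from the decomposition above.
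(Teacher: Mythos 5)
Your proposal is correct and follows essentially the same route as the paper: both decompose $h(x,y)$ against $\Phi(x)=f(x,\ysol(x))$, bound $f(x,\A(x,y))-f(x,\ysol(x))$ from below by $-M_f\norm{\A(x,y)-\ysol(x)}$ via the $M_f$-Lipschitz continuity of $f$, invoke Lemma \ref{Le_secondorder_descrease}, and use $\beta \geq \frac{M_fQ_g}{\mu^3}$ to absorb the error into the penalty term, yielding $h(x,y)\geq \Phi(x)\geq \inf_{x\in\Rn}\Phi(x) > -\infty$. No issues.
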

\begin{proof}
	We conclude from Lemma \ref{Le_secondorder_descrease} that  
	\begin{equation*}
		\begin{aligned}
			&h(x, y) - f(x, \ysol(x)) = f(x, \A(x, y)) + \frac{\beta}{2} \norm{\nabla_y g(x, y)}^2 - f(x, \ysol(x)) \\
			\geq{}& -M_f \norm{\A(x, y) - \ysol(x)} + \frac{\beta}{2} \norm{\nabla_y g(x, y)}^2 \geq   0,
		\end{aligned}
	\end{equation*}
	which implies that
	\begin{equation*}
		\inf_{(x, y) \in \Rn\times \Rp} h(x, y) \geq \inf_{x \in \M} f(x, \ysol(x)) = \inf_{x \in \Rn} \Phi(x) > -\infty,
	\end{equation*}
	hence completes the proof. 
\end{proof}

\begin{lem}
	\label{Le_Lipschitz_continuous_fA}
	For any given $(x, y) \in \Rn \times \Rp$, and any  $d \in \bb{R}^p$, it holds that 
	\begin{equation*}
		\mathop{ \lim\sup }_{t \to 0} \left|\frac{ f(x, \A(x, y+td)) - f(x, \A(x, y)) }{t}\right| \leq \frac{M_fQ_g}{\mu^2} \norm{\nabla_y g(x, y)}\norm{d}. 
	\end{equation*}
\end{lem}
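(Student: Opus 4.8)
The plan is to first peel off the dependence on $f$ using its $M_f$-Lipschitz continuity (Assumption \ref{Assumption_2}): since
\begin{equation*}
	\left| f(x, \A(x, y+td)) - f(x, \A(x, y)) \right| \le M_f \norm{\A(x, y+td) - \A(x, y)},
\end{equation*}
dividing by $|t|$ shows that it suffices to establish $\limsup_{t\to 0} \frac{1}{|t|}\norm{\A(x, y+td) - \A(x, y)} \le \frac{Q_g}{\mu^2}\norm{\nabla_y g(x, y)}\norm{d}$, and then multiply through by $M_f$.

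For that estimate, abbreviate $M(v) := \nabla_{yy}^2 g(x, v)$ and split
\begin{equation*}
	\A(x, y+td) - \A(x, y) = td - M(y+td)^{-1}\big( \nabla_y g(x, y+td) - \nabla_y g(x, y) \big) - \big( M(y+td)^{-1} - M(y)^{-1} \big)\nabla_y g(x, y).
\end{equation*}
Writing $\nabla_y g(x, y+td) - \nabla_y g(x, y) = \int_0^1 M(y + std)(td)\,\mathrm{d}s$ and using $d = M(y+td)^{-1}\int_0^1 M(y+td) d\,\mathrm{d}s$, the first two terms collapse to $t\, M(y+td)^{-1}\int_0^1 \big( M(y+td) - M(y+std) \big) d\,\mathrm{d}s$; since $\nabla_{yy}^2 g$ is $Q_g$-Lipschitz (Assumption \ref{Assumption_1}(4)) and $\norm{M(y+td)^{-1}} \le 1/\mu$ by $\mu$-strong convexity, this has norm $\ca{O}(t^2)$ and drops out of the limit. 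For the last term, Lemma \ref{Le_lip_constant} gives $\norm{M(y+td)^{-1} - M(y)^{-1}} \le \frac{Q_g}{\mu^2}|t|\,\norm{d}$, so its contribution to $\frac{1}{|t|}\norm{\A(x, y+td) - \A(x, y)}$ is at most $\frac{Q_g}{\mu^2}\norm{d}\,\norm{\nabla_y g(x, y)}$. Adding the two pieces and letting $t \to 0$ yields the claim.

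(Equivalently, Assumption \ref{Assumption_1}(5) makes $\A(x, \cdot)$ a $C^1$ map, and differentiating $\A(x, y) = y - M(y)^{-1}\nabla_y g(x, y)$ in the direction $d$ gives, after the cancellation of $d$ with $M(y)^{-1}\big(\nabla_{yy}^2 g(x,y) d\big) = d$, the identity $\partial_y \A(x, y)[d] = M(y)^{-1}\big(\nabla_{yyy}^3 g(x, y)[d]\big)M(y)^{-1}\nabla_y g(x, y)$; bounding its norm by $\frac{1}{\mu}\cdot Q_g\norm{d}\cdot\frac{1}{\mu}\norm{\nabla_y g(x, y)}$ using $\norm{\nabla_{yyy}^3 g(x, y)[d]} \le Q_g\norm{d}$ from the Preliminaries gives the same constant.)

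The step I expect to require the most care is the bookkeeping in the splitting: one must recognize that the only first-order-in-$t$ contribution is the variation of $M^{-1}$ tested against the \emph{fixed} vector $\nabla_y g(x, y)$ — which is precisely why the bound carries the factor $\norm{\nabla_y g(x, y)}$ and why it vanishes on $\M$ — whereas the Newton-correction part is quadratic in $t$ because $\A$ uses $M(y)^{-1}$ rather than $M(y+td)^{-1}$. Everything else is a direct application of the Lipschitz estimates already in hand.
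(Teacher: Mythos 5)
Your proposal is correct and follows essentially the same route as the paper: both exploit the cancellation of the $d$-direction in the Newton map so that the only first-order contribution comes from the variation of $\nabla_{yy}^2 g(x,\cdot)^{-1}$ (bounded via Lemma \ref{Le_lip_constant}) tested against $\nabla_y g(x,y)$, with the remaining terms being $\ca{O}(t^2)$. The only cosmetic difference is that you peel off the $M_f$-Lipschitzness of $f$ first and test the inverse-Hessian increment against $\nabla_y g(x,y)$ rather than $\nabla_y g(x,y+td)$, which spares you the paper's extra $L_g$-remainder term but yields the identical constant $\frac{M_f Q_g}{\mu^2}$.
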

\begin{proof}
	Let $z_t := y+td$. Then it follows from the expression of $\A$ that 
	\begin{equation*}
		\begin{aligned}
			&\left|f(x, \A(x, z_t)) - f(x, \A(x, y))\right|\\
			={}& \left|f(x, z_t - \nabla_{yy}^2 g(x, z_t)^{-1} \nabla_y g(x, z_t) ) - f(x, y - \nabla_{yy}^2g(x, y)^{-1} \nabla_y g(x, y))\right|\\
			\leq{}& \left|f(x, z_t - \nabla_{yy}^2 g(x, y)^{-1} \nabla_y g(x, z_t) ) - f(x, y - \nabla_{yy}^2 g(x, y)^{-1} \nabla_y g(x, y))\right| \\
			+& \left|f(x, z_t - \nabla_{yy}^2 g(x, z_t)^{-1} \nabla_y g(x, z_t) )  -  f(x, z_t - \nabla_{yy}^2 g(x, y)^{-1} \nabla_y g(x, z_t))\right|.
		\end{aligned}
	\end{equation*}
	Notice that 
	\begin{equation*}
		\norm{td - \nabla_{yy}^2 g(x, y)^{-1} \nabla_y g(x, z_t) + \nabla_{yy}^2g(x, y)^{-1} \nabla_y g(x, y)} \leq t^2\frac{Q_g}{\mu} \norm{d}^2,
	\end{equation*}
	hence we achieve the following inequality, 
	\begin{equation*}
		\begin{aligned}
			&\left|f(x, z_t - \nabla_{yy}^2 g(x, y)^{-1} \nabla_y g(x, z_t) ) - f(x, y - \nabla_{yy}^2g(x, y)^{-1} \nabla_y g(x, y))\right| \\
			\leq{}& M_f \norm{\left(z_t - \nabla_{yy}^2 g(x, y)^{-1} \nabla_y g(x, z_t)\right) - \left(y - \nabla_{yy}^2g(x, y)^{-1} \nabla_y g(x, y)\right)  }\\
			\leq{}& t^2\frac{M_fQ_g}{\mu} \norm{d}^2.
		\end{aligned}
	\end{equation*}
	On the other hand, 
	\begin{equation*}
		\begin{aligned}
			&\left|f(x, z_t - \nabla_{yy}^2 g(x, z_t)^{-1} \nabla_y g(x, z_t) ) - f(x, z_t - \nabla_{yy}^2g(x, y)^{-1} \nabla_y g(x, z_t))\right|\\
			&\leq M_f \norm{\nabla_{yy}^2 g(x, z_t)^{-1} \nabla_y g(x, z_t) - \nabla_{yy}^2 g(x, y)^{-1} \nabla_y g(x, z_t)} \\
			&\leq M_f \norm{\nabla_{yy}^2 g(x, z_t)^{-1} - \nabla_{yy}^2 g(x, y)^{-1}} \norm{\nabla_y g(x, z_t)}\\
			&\leq t\frac{Q_gM_f}{\mu^2}  \norm{\nabla_y g(x, z_t)}\norm{d}
			\leq  t\frac{Q_gM_f}{\mu^2}  \norm{\nabla_y g(x, y)}\norm{d} + t^2 \frac{Q_gM_f L_g}{\mu^2} \norm{d}^2. 
		\end{aligned}
	\end{equation*}
	Therefore, we obtain that
	\begin{equation*}
		\mathop{ \lim\sup }_{t \to 0} \left|\frac{ f(x, \A(x, y+td)) - f(x, \A(x, y)) }{t}\right|
		\leq  \frac{Q_gM_f}{\mu^2}  \norm{\nabla_y g(x, y)}\norm{d},
	\end{equation*}
	and the proof is completed. 
\end{proof}

\begin{theo}
	\label{Theo_subgradient_equivalence}
	Suppose $\beta \geq \frac{2Q_gM_f}{\mu^3}$. If $(x,y)\in \bb{R}^n\times \bb{R}^p$ is a first-order stationary point of \ref{Prob_Pen}, then $(x,y) \in \M$ and hence is a first-order stationary point of \ref{Prob_Ori}. 
\end{theo}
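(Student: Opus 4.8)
The plan is to argue by contradiction, exploiting the explicit over-approximation of $\partial h$ furnished by Proposition~\ref{Prop_gradient_expression}. Suppose $(x,y)$ is a first-order stationary point of \ref{Prob_Pen} but $\nabla_y g(x,y) \neq 0$. Since $0 \in \partial h(x,y)$ and $\partial h(x,y)$ is contained in the set displayed in Proposition~\ref{Prop_gradient_expression}, there exists $(d_x, d_y) \in \partial f(x, \A(x,y))$ with
\begin{equation*}
	\JAy(x,y)\, d_y + \beta\, \nabla_{yy}^2 g(x,y)\, \nabla_y g(x,y) = 0 .
\end{equation*}
Only this second block of the optimality system is needed; the first block plays no role. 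The idea is that the penalty term forces $\nabla_{yy}^2 g(x,y)\nabla_y g(x,y)$ to be at least $\mu\norm{\nabla_y g(x,y)}$ in norm, whereas $\JAy(x,y)d_y$ is controlled by a fixed multiple of $\norm{\nabla_y g(x,y)}$ independent of $\beta$, which is incompatible once $\beta$ is large.

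Concretely, I would estimate the two terms of the above identity separately. For the penalty term, $\mu$-strong convexity ($\nabla_{yy}^2 g(x,y) \succeq \mu I_p$) gives $\norm{\nabla_{yy}^2 g(x,y)\nabla_y g(x,y)} \geq \mu\norm{\nabla_y g(x,y)}$ and also $\norm{\nabla_{yy}^2 g(x,y)^{-1}} \leq 1/\mu$. For $\JAy(x,y)d_y$, I would bound the operator norm of $\JAy(x,y)$ using its definition~\eqref{Eq_JAY}: writing $w := \nabla_{yy}^2 g(x,y)^{-1}\nabla_y g(x,y)$, we have $\norm{w} \leq \norm{\nabla_y g(x,y)}/\mu$, the preliminary bound $\norm{\nabla_{yyy}^3 g(x,y)[w]} \leq Q_g\norm{w}$, and hence $\norm{\JAy(x,y)} \leq Q_g\norm{w}/\mu \leq \tfrac{Q_g}{\mu^2}\norm{\nabla_y g(x,y)}$; combined with $\norm{d_y} \leq M_f$ (which holds because $f$ is $M_f$-Lipschitz, so every element of $\partial f$ has norm at most $M_f$), this yields $\norm{\JAy(x,y)d_y} \leq \tfrac{Q_g M_f}{\mu^2}\norm{\nabla_y g(x,y)}$.

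Taking norms in the identity and inserting these two estimates gives $\beta\mu\norm{\nabla_y g(x,y)} \leq \tfrac{Q_g M_f}{\mu^2}\norm{\nabla_y g(x,y)}$, i.e. $\big(\beta\mu - \tfrac{Q_g M_f}{\mu^2}\big)\norm{\nabla_y g(x,y)} \leq 0$. Under $\beta \geq \tfrac{2 Q_g M_f}{\mu^3}$ the coefficient is at least $\tfrac{Q_g M_f}{\mu^2} > 0$ (the claim being vacuous if $Q_g M_f = 0$), forcing $\nabla_y g(x,y) = 0$, that is, $(x,y) \in \M$. Finally, since $(x,y) \in \M$ is a first-order stationary point of \ref{Prob_Pen}, Proposition~\ref{Prop_stationary_feasible} shows that $(x,y)$ is a first-order stationary point of \ref{Prob_Ori}, which finishes the argument.

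I expect the only real work to be in the second paragraph — extracting the operator-norm bound on $\JAy(x,y)$ from the composition of the third-order tensor $\nabla_{yyy}^3 g$ with $\nabla_{yy}^2 g(x,y)^{-1}$, and confirming that the resulting threshold on $\beta$ is covered by $\beta \geq \tfrac{2 Q_g M_f}{\mu^3}$ (in fact $\beta > \tfrac{Q_g M_f}{\mu^3}$ already suffices, so the stated bound leaves room). An alternative proof, which is presumably why Lemma~\ref{Le_Lipschitz_continuous_fA} was set up, is to exhibit the Newton-type direction $(0,\, -\nabla_{yy}^2 g(x,y)^{-1}\nabla_y g(x,y))$ and show that the Clarke generalized directional derivative $h^\circ(x,y;\cdot)$ is strictly negative along it, using Lemma~\ref{Le_Lipschitz_continuous_fA} for the $f(x,\A(x,y))$ part and the $C^1$ smoothness of $\tfrac{\beta}{2}\norm{\nabla_y g(x,y)}^2$ for the penalty part; the subtlety there is that Lemma~\ref{Le_Lipschitz_continuous_fA} bounds only the one-sided difference quotient at the fixed base point, so one must first promote it to a locally uniform estimate in order to bound $h^\circ$, whereas the route via Proposition~\ref{Prop_gradient_expression} sidesteps this issue entirely.
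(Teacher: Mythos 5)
Your proof is correct, but it follows a different route from the paper's own argument for this theorem. You work entirely at the level of the subdifferential inclusion: from $0 \in \partial h(x,y)$ and Proposition~\ref{Prop_gradient_expression} you extract the $y$-block identity $\JAy(x,y)d_y + \beta \nabla_{yy}^2 g(x,y)\nabla_y g(x,y) = 0$, bound $\norm{\JAy(x,y)d_y} \leq \tfrac{Q_g M_f}{\mu^2}\norm{\nabla_y g(x,y)}$ and $\norm{\beta\nabla_{yy}^2 g(x,y)\nabla_y g(x,y)} \geq \beta\mu\norm{\nabla_y g(x,y)}$, and let the size of $\beta$ force $\nabla_y g(x,y)=0$; this is legitimate since only the inclusion direction of Proposition~\ref{Prop_gradient_expression} (valid without Clarke regularity) is needed, and your constants check out, with even a little slack as you note. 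The paper instead proves feasibility by evaluating the generalized directional derivative $h^\circ(x,y;0,-\nabla_y g(x,y))$: the smooth penalty contributes $-\mu\beta\norm{\nabla_y g(x,y)}^2$, while Lemma~\ref{Le_Lipschitz_continuous_fA} controls the $f(x,\A(x,y))$ part by $\tfrac{M_f Q_g}{\mu^2}\norm{\nabla_y g(x,y)}^2$, so that $0 \leq h^\circ \leq -\tfrac{\mu\beta}{2}\norm{\nabla_y g(x,y)}^2$. Your route is exactly the one already trodden by the paper and the final appeal to Proposition~\ref{Prop_stationary_feasible} is shared by both, so nothing new is required beyond results the paper states; what it buys is brevity, no use of Lemma~\ref{Le_Lipschitz_continuous_fA}, and it sidesteps the interchange-of-limsup issue you correctly flag in the paper's directional-derivative argument (Lemma~\ref{Le_Lipschitz_continuous_fA} is stated pointwise while $h^\circ$ takes a limsup over nearby base points). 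What the paper's route buys is that the infeasibility argument works directly from Definition~\ref{Defin_Subdifferential} rather than leaning on the (unproved, chain-rule-based) inclusion of Proposition~\ref{Prop_gradient_expression} at infeasible points. One shared caveat: both your argument and the paper's implicitly need $\beta>0$ (and in your final inequality $Q_gM_f>0$, or the separate trivial treatment of $\JAy\equiv 0$ when $Q_g=0$) to conclude $\nabla_y g(x,y)=0$; this is an artifact of the theorem allowing $\beta \geq \tfrac{2Q_gM_f}{\mu^3}=0$ in the degenerate case rather than a defect specific to your proof.
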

\begin{proof}
	Suppose $(x, y)$ is a stationary point of \ref{Prob_Pen}, we have $0 \in \partial h(x, y)$. Therefore,  it follows from Definition \ref{Defin_Subdifferential} that $0 \leq  h^\circ(x, y; 0, -\nabla_y g(x, y))$.
	
	Notice that $\nabla_y g(x, y)$ is differentiable, then it holds that 
	\begin{equation*}
		\begin{aligned}
			&\mathop{\lim}_{t\to 0} \frac{\norm{\nabla_y g(x, y- t\nabla_y g(x, y))   }^2 - \norm{\nabla_y g(x, y)   }^2}{t} \\
			={}& - 2 \nabla_y g(x, y)\tp \nabla_{yy}^2 g(x, y) \nabla_y g(x, y)  \leq -2\mu \norm{\nabla_y g(x, y)}^2. 
		\end{aligned}
	\end{equation*}
	Therefore, it holds from Lemma \ref{Le_Lipschitz_continuous_fA}  that 
	\begin{equation*}
		\small
		\begin{aligned}
			&0 \leq h^\circ(x, y; 0, -\nabla_y g(x, y)) 
			= \mathop{\lim\sup}_{(\tilde{x}, \tilde{y}) \to (x,y), ~t \downarrow 0}~ \frac{ h(\tilde{x}, \tilde{y}-t\nabla_y g(x, y)) - h(\tilde{x}, \tilde{y}) }{t}\\
			={}& \mathop{\lim\sup}_{(\tilde{x}, \tilde{y}) \to (x,y), ~t \downarrow 0}~ \frac{ f(\tilde{x}, \A(\tilde{x},\tilde{y}-t\nabla_y g(x, y))) - f(\tilde{x}, \A(\tilde{x}, \tilde{y})) }{t}\\
			& + \frac{\beta}{2}\lim_{t\to 0} \frac{\norm{\nabla_y g(x, y- t\nabla_y g(x, y))   }^2 - \norm{\nabla_y g(x, y)   }^2}{t} \\
			\leq{}& -\mu\beta \norm{\nabla_y g(x, y)}^2 +   \mathop{\lim\sup}_{(\tilde{x}, \tilde{y}) \to (x,y)}~ \frac{M_fQ_g}{\mu^2}   \norm{\nabla_y g(\tilde{x}, \tilde{y})}\norm{\nabla_y g(x, y)}\\
			\leq{}& -\frac{\mu\beta}{2} \norm{\nabla_y g(x, y)}^2 \leq   0.
		\end{aligned}
	\end{equation*}
	Therefore, we conclude that  $\nabla_y g(x,y) = 0$ and $(x, y) \in \M$. Thus $(x, y)$ is a first-order stationary point of \ref{Prob_Ori} by Proposition \ref{Prop_stationary_feasible}. 
\end{proof}

%	\begin{rmk}
%		The proofs in Theorem \ref{Theo_subgradient_equivalence} do not rely on the differentiability of $\nabla_{yy}^2g(x, y)$. Therefore, if Clarke subdifferential of $f$ is available, the equivalence between \ref{Prob_Ori} and \ref{Prob_Pen} still holds without Assumption \ref{Assumption_1}(6).
%	\end{rmk}

\begin{coro}
	Suppose $f$ is Clarke regular and $\beta \geq \frac{2Q_gM_f}{\mu^3}$. Then \ref{Prob_Ori} and \ref{Prob_Pen} have the same first-order stationary points over $\Rn\times \Rp$. 
\end{coro}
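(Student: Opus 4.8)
The plan is to read off the corollary as an immediate consequence of Theorem~\ref{Theo_subgradient_equivalence} together with the Clarke-regular case of Proposition~\ref{Prop_stationary_feasible}; no new estimates are needed, so I would simply split the asserted equality of stationary-point sets into its two inclusions.

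For the inclusion ``every first-order stationary point of \ref{Prob_Pen} is a first-order stationary point of \ref{Prob_Ori}'': this is precisely the content of Theorem~\ref{Theo_subgradient_equivalence}. Under the hypothesis $\beta \geq \frac{2Q_gM_f}{\mu^3}$, that theorem already forces such a point onto $\M$ and then identifies it as a first-order stationary point of \ref{Prob_Ori}. I would stress that this direction does not use Clarke regularity of $f$ at all; it is valid for the general conservative-field setting.

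For the reverse inclusion: let $(x,y)$ be a first-order stationary point of \ref{Prob_Ori}. By Definition~\ref{Defin_FOSP_Ori}, the equation $0 = \nabla_y g(x,y)$ is part of the stationarity system, so $(x,y) \in \M$ automatically — there is nothing to prove here. Since $f$ is assumed Clarke regular, the second statement of Proposition~\ref{Prop_stationary_feasible} applies and gives that $(x,y)$ is a first-order stationary point of \ref{Prob_Pen}. One small bookkeeping remark worth inserting is that this direction of Proposition~\ref{Prop_stationary_feasible} imposes no lower bound on $\beta$, so the threshold $\beta \geq \frac{2Q_gM_f}{\mu^3}$ enters only through Theorem~\ref{Theo_subgradient_equivalence}.

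Combining the two inclusions yields equality of the first-order stationary-point sets over $\Rn\times\Rp$, which is the claim. There is no genuine obstacle: the substantive work — the descent estimate along the direction $(0,-\nabla_y g(x,y))$ built on Lemma~\ref{Le_Lipschitz_continuous_fA} and the strong-convexity bound $\nabla_y g(x,y)\tp \nabla_{yy}^2 g(x,y)\nabla_y g(x,y)\geq \mu\norm{\nabla_y g(x,y)}^2$ — has already been carried out in the proof of Theorem~\ref{Theo_subgradient_equivalence}. The only thing to keep straight is that the penalty threshold stated in the corollary is at least the one required there, which holds by construction.
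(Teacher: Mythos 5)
Your proposal is correct and matches the paper's own (omitted) argument exactly: the paper states that the corollary follows from Theorem~\ref{Theo_subgradient_equivalence} (for the direction from \ref{Prob_Pen} to \ref{Prob_Ori}, where the bound on $\beta$ is used) and Proposition~\ref{Prop_stationary_feasible} (for the converse, where Clarke regularity is used and membership in $\M$ is automatic from Definition~\ref{Defin_FOSP_Ori}). Your bookkeeping remarks about which hypothesis enters where are accurate; no gap.
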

The proof  straightforwardly follows from Theorem \ref{Theo_subgradient_equivalence} and Proposition \ref{Prop_stationary_feasible}. Hence we omit its details for simplicity. 

\subsection{Equivalence: conservative field}
In this subsection, we study the equivalence between \ref{Prob_Ori} and \ref{Prob_Pen} based on the concept of  conservative field. 
With the set-valued mapping $\Dh(x, y)$ defined by 
\begin{equation}
	\label{Eq_Defin_Dh}
	\ca{D}_h(x, y) := \left\{   \left[\begin{smallmatrix}
		&d_x + \JAx(x,y) d_y + \beta \nabla_{xy}^2 g(x, y) \nabla_y g(x, y)\\
		&\JAy(x,y)d_y + \beta \nabla_{yy}^2 g(x, y) \nabla_y g(x, y) \\
	\end{smallmatrix}\right] :   \left[\begin{smallmatrix}
		&d_x \\
		&d_y \\
	\end{smallmatrix}\right] \in \ca{D}_f(x, \A(x, y)) \right\},
\end{equation}
we have the following proposition characterizing the property of $\ca{D}_h$.
\begin{prop}
	\label{potential}
	$\ca{D}_h(x, y)$ is a convex-valued conservative field that admits $h(x,y)$ as its potential. 
\end{prop}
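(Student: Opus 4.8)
The plan is to realize $\ca{D}_h$ as the conservative field obtained by applying the conservative chain rule (Proposition~\ref{Prop_Conservative_Chain_rule}) and the conservative sum rule (Proposition~\ref{Prop_Conservative_Sum_rule}) to the decomposition $h = h_1 + h_2$, where $h_1(x,y) := f(x,\A(x,y))$ and $h_2(x,y) := \frac{\beta}{2}\norm{\nabla_y g(x,y)}^2$. This is exactly where the conservative-field machinery is needed: the Clarke calculus only delivers the inclusion of Proposition~\ref{Prop_gradient_expression}, whereas the chain and sum rules for conservative fields deliver an \emph{exact} conservative field for $h$.

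For the penalty term, Assumption~\ref{Assumption_1} implies that $\nabla_y g$ is continuously differentiable, so $h_2$ is $C^1$ with $\nabla h_2(x,y) = \beta\big(\nabla_{xy}^2 g(x,y)\nabla_y g(x,y),\, \nabla_{yy}^2 g(x,y)\nabla_y g(x,y)\big)$; hence the singleton-valued map $(x,y)\mapsto\{\nabla h_2(x,y)\}$ is a conservative field admitting $h_2$ as its potential. For $h_1$, write $h_1 = f\circ G$ with $G(x,y) := (x,\A(x,y))$. By Assumption~\ref{Assumption_1}(2) and (5), $\nabla_{yy}^2 g$ is continuously differentiable and everywhere $\succeq\mu I_p$, so $(\nabla_{yy}^2 g)^{-1}$ is $C^1$ (Lemma~\ref{Le_lip_constant} already records the corresponding Lipschitz estimate), and therefore $\A$ and $G$ are $C^1$. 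Differentiating $\A(x,y) = y - (\nabla_{yy}^2 g(x,y))^{-1}\nabla_y g(x,y)$ — where the identity part of the $y$-derivative of $(\nabla_{yy}^2 g)^{-1}\nabla_y g$ cancels the $y$ in the leading term, while the derivative of the inverse Hessian produces the third-order terms — shows, by exactly the computation behind Proposition~\ref{Prop_gradient_expression} together with the symmetry of the third partials of $g$, that the transposed Jacobian of $G$ equals $\left[\begin{smallmatrix} I_n & \JAx(x,y) \\ 0 & \JAy(x,y)\end{smallmatrix}\right]$, with $\JAx,\JAy$ as in \eqref{Eq_JAX} and \eqref{Eq_JAY}. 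Since $G$ is $C^1$, its classical Jacobian, viewed as a singleton, is a conservative mapping for $G$ directly from Definition~\ref{Defin_conservative_mapping}; and $\ca{D}_f$ is a conservative field, equivalently a conservative mapping for the scalar function $f$, by Assumption~\ref{Assumption_2}.

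Applying Proposition~\ref{Prop_Conservative_Chain_rule} with $F_1 = G$ and $F_2 = f$ then shows that $(x,y)\mapsto \ca{D}_f(G(x,y))\,J_G(x,y)$ is a conservative field for $h_1 = f\circ G$; rewriting this set in the column-vector convention used for $\ca{D}_f$ and $\ca{D}_h$, and inserting the transposed Jacobian just computed, it equals $\big\{\,(d_x + \JAx(x,y)d_y,\, \JAy(x,y)d_y) : (d_x,d_y)\in\ca{D}_f(x,\A(x,y))\,\big\}$. Adding the conservative field $\{\nabla h_2\}$ of $h_2$ and invoking the sum rule (Proposition~\ref{Prop_Conservative_Sum_rule}), we conclude that $h = h_1 + h_2$ is a potential function for the sum of these two conservative fields, and that sum is exactly $\ca{D}_h$ as defined in \eqref{Eq_Defin_Dh}. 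The remaining requirements in the definition of a conservative field (nonempty compact values, closed graph) are inherited: $\ca{D}_f$ has nonempty compact convex values by Assumption~\ref{Assumption_2}, $G$ and $J_G$ are continuous, and adding a continuous singleton preserves these properties.

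I expect the only real difficulty to be the identification of the transposed Jacobian of $G$ with $\left[\begin{smallmatrix} I_n & \JAx \\ 0 & \JAy\end{smallmatrix}\right]$: one must differentiate $(\nabla_{yy}^2 g)^{-1}$ with care, verify that the resulting third-derivative terms are precisely those in \eqref{Eq_JAX} and \eqref{Eq_JAY} (which relies on the symmetry of the third partials of $g$, available under Assumption~\ref{Assumption_1}(5)), and keep the gradient-versus-tangent transposition conventions of Definition~\ref{Defin_conservative_mapping} consistent with the column-vector presentation of $\ca{D}_f$. Once this bookkeeping is settled, conservativity of $\ca{D}_h$ and the fact that $h$ is its potential follow immediately from the chain and sum rules, with no path integral of our own to evaluate.
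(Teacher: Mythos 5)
Your proposal is correct and follows essentially the same route as the paper: since $\A$ is continuously differentiable, its Jacobian (as a singleton, identified with $[\JAx,\JAy]$) is a conservative mapping, and the chain rule (Proposition~\ref{Prop_Conservative_Chain_rule}) together with the sum rule (Proposition~\ref{Prop_Conservative_Sum_rule}) then yields that $\Dh$ is a conservative field with potential $h$. The paper merely states this in two sentences, whereas you additionally spell out the Jacobian computation and the closed-graph/compactness bookkeeping, which is fine.
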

\begin{proof}
	Since $\A$ is continuously differentiable, it holds that  $\A$ is a potential mapping for its Jacobian $[J_{A,x}(x, y), J_{A, y}(x, y)]\tp$.  As a result, by the chain rule and sum rule in Proposition \ref{Prop_Conservative_Chain_rule} and Proposition \ref{Prop_Conservative_Sum_rule}, $\Dh$  
	is a conservative field that admits $h(x, y)$ as its potential function. 
\end{proof}

%	\begin{defin}
%		For any given $(x,y) \in \Rn\times \Rp$, we say $(x,y)$ is a $\Dh$-stationary point of \ref{Prob_Pen} if $0 \in \Dh(x,y)$. 
%	\end{defin}

\begin{prop}
	\label{Prop_conservative_feasible}
	For any given $(x, y) \in \M$, $(x, y)$ is a $\ca{D}_f$-stationary point of \ref{Prob_Ori} if and only if $(x, y)$ is a $\Dh$-stationary point of \ref{Prob_Pen}. 
\end{prop}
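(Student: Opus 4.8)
The plan is to exploit the fact that feasibility, $(x,y)\in\M$, collapses the defining formula \eqref{Eq_Defin_Dh} for $\Dh(x,y)$ to something trivial. First I would record that $(x,y)\in\M$ means $\nabla_y g(x,y)=0$, so that $\A(x,y)=y-\nabla_{yy}^2 g(x,y)^{-1}\nabla_y g(x,y)=y$; in particular $\ca{D}_f(x,\A(x,y))=\ca{D}_f(x,y)$. Next I would substitute $\nabla_y g(x,y)=0$ into \eqref{Eq_JAX} and \eqref{Eq_JAY}: the second summand of $\JAx$ contains the factor $\nabla_{yy}^2 g(x,y)^{-1}\nabla_y g(x,y)=0$ inside the bracket and hence vanishes, leaving $\JAx(x,y)=-\nabla_{xy}^2 g(x,y)\nabla_{yy}^2 g(x,y)^{-1}$, and for the same reason $\JAy(x,y)=0$. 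The penalty terms $\beta\,\nabla_{xy}^2 g(x,y)\nabla_y g(x,y)$ and $\beta\,\nabla_{yy}^2 g(x,y)\nabla_y g(x,y)$ likewise vanish.

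With these simplifications, \eqref{Eq_Defin_Dh} reduces to
\[
\Dh(x,y)=\left\{\left[\begin{smallmatrix} d_x-\nabla_{xy}^2 g(x,y)\nabla_{yy}^2 g(x,y)^{-1}d_y \\ 0 \end{smallmatrix}\right] : \left[\begin{smallmatrix} d_x \\ d_y \end{smallmatrix}\right]\in\ca{D}_f(x,y)\right\}.
\]
I would then just unwind the two stationarity definitions. By Definition \ref{Defin_FOSP_Pen_field}, $(x,y)$ is a $\Dh$-stationary point of \ref{Prob_Pen} iff $0\in\Dh(x,y)$; since the second block above is identically zero for every admissible $d_y$, this holds iff there exists $(d_x,d_y)\in\ca{D}_f(x,y)$ with $d_x-\nabla_{xy}^2 g(x,y)\nabla_{yy}^2 g(x,y)^{-1}d_y=0$. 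On the other hand, by Definition \ref{Defin_FOSP_Ori_field}, $(x,y)$ is a $\ca{D}_f$-stationary point of \ref{Prob_Ori} iff there exists $(d_x,d_y)\in\ca{D}_f(x,y)$ with $d_x-\nabla_{xy}^2 g(x,y)\nabla_{yy}^2 g(x,y)^{-1}d_y=0$ and $\nabla_y g(x,y)=0$; but the latter equality is automatic from $(x,y)\in\M$. Hence the two conditions coincide, which yields the equivalence.

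Since every step is an algebraic substitution, there is no serious obstacle here; the only point demanding a little care is verifying that the residual second block $\JAy(x,y)d_y+\beta\,\nabla_{yy}^2 g(x,y)\nabla_y g(x,y)$ really is zero for \emph{all} admissible $d_y$, so that it imposes no extra constraint — this follows from $\JAy(x,y)=0$ and $\nabla_y g(x,y)=0$ noted above. I would also remark that this proposition is the conservative-field analogue of Proposition \ref{Prop_stationary_feasible}, and that, as there, no regularity of $f$ is required, because $\Dh$ is built from $\ca{D}_f$ through the exact chain and sum rules of Proposition \ref{potential} rather than through an inclusion.
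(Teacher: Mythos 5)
Your proposal is correct and is exactly the direct verification the paper has in mind: the paper omits the proof, stating that the result ``directly follows from the expression of $\Dh$,'' and your argument simply spells this out by substituting $\nabla_y g(x,y)=0$ into \eqref{Eq_JAX}, \eqref{Eq_JAY} and \eqref{Eq_Defin_Dh} and matching Definitions \ref{Defin_FOSP_Ori_field} and \ref{Defin_FOSP_Pen_field}. No gaps; the observations that $\A(x,y)=y$ (so $\ca{D}_f(x,\A(x,y))=\ca{D}_f(x,y)$) and that the second block vanishes identically are precisely the points that need checking.
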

From Definition \ref{Defin_FOSP_Pen_field}, any $(x,y)$ satisfying $0 \in \Dh(x,y)$ is called a $\Dh$-stationary point of \ref{Prob_Pen}.  Then Proposition \ref{Prop_conservative_feasible} directly follows from the expression of $\Dh$, and we omit its proof for simplicity.

\begin{theo}
	\label{Theo_conservative_equivalence}
	Suppose $\beta \geq \frac{2Q_gM_f}{\mu^3}$, then $(x, y) \in \Rn\times \Rp$ is a $\ca{D}_f$-stationary point of \ref{Prob_Ori} if and only if $(x, y)$ is a $\Dh$-stationary point of \ref{Prob_Pen}. 
\end{theo}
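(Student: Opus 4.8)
The plan is to follow the same two-step strategy as in Theorem~\ref{Theo_subgradient_equivalence}: first reduce the equivalence on the manifold $\M$ to Proposition~\ref{Prop_conservative_feasible}, and then show that the threshold $\beta\geq\frac{2Q_gM_f}{\mu^3}$ already forces every $\Dh$-stationary point of \ref{Prob_Pen} to lie in $\M$. The only point that genuinely needs work is this last step; the conservative-field bookkeeping is already handled by Proposition~\ref{potential} (which makes $\Dh$-stationarity a meaningful notion for the potential $h$) and by Proposition~\ref{Prop_conservative_feasible} (which gives the on-manifold equivalence).

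For the ``only if'' direction, if $(x,y)$ is a $\ca{D}_f$-stationary point of \ref{Prob_Ori} then Definition~\ref{Defin_FOSP_Ori_field} in particular requires $\nabla_y g(x,y)=0$, i.e.\ $(x,y)\in\M$; Proposition~\ref{Prop_conservative_feasible} then immediately gives that $(x,y)$ is a $\Dh$-stationary point of \ref{Prob_Pen}. For the ``if'' direction I would start from $0\in\Dh(x,y)$ and, using the explicit description \eqref{Eq_Defin_Dh} of $\Dh$, extract a witness $(d_x,d_y)\in\ca{D}_f(x,\A(x,y))$ satisfying the two component equations; the second of these reads
\begin{equation*}
	0 = \JAy(x,y)\,d_y + \beta\,\nabla_{yy}^2 g(x,y)\,\nabla_y g(x,y).
\end{equation*}
Pairing this with $\nabla_y g(x,y)$ and invoking $\nabla_{yy}^2 g(x,y)\succeq\mu I_p$ (Assumption~\ref{Assumption_1}) yields $\beta\mu\norm{\nabla_y g(x,y)}^2 \leq \norm{\JAy(x,y)}\,\norm{d_y}\,\norm{\nabla_y g(x,y)}$. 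Here $\norm{d_y}\leq M_f$ by Assumption~\ref{Assumption_2}, and from the formula \eqref{Eq_JAY} together with the tensor bound $\norm{\nabla_{yyy}^3 g(x,y)[d]}\leq Q_g\norm{d}$ and $\norm{\nabla_{yy}^2 g(x,y)^{-1}}\leq 1/\mu$ one obtains $\norm{\JAy(x,y)}\leq\frac{Q_g}{\mu^2}\norm{\nabla_y g(x,y)}$. Combining the estimates gives $\beta\mu\norm{\nabla_y g(x,y)}^2\leq\frac{Q_gM_f}{\mu^2}\norm{\nabla_y g(x,y)}^2$, and since $\beta\geq\frac{2Q_gM_f}{\mu^3}$ the left-hand coefficient strictly exceeds the right-hand one, forcing $\nabla_y g(x,y)=0$ and hence $(x,y)\in\M$. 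Proposition~\ref{Prop_conservative_feasible} then finishes the argument.

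I expect the only delicate point to be the operator-norm estimate for $\JAy(x,y)$: one has to keep track that $\JAy$ is the product of the matrix $\nabla_{yyy}^3 g(x,y)[\nabla_{yy}^2 g(x,y)^{-1}\nabla_y g(x,y)]$ with an extra factor $\nabla_{yy}^2 g(x,y)^{-1}$, so it contributes exactly one factor of $Q_g$, two factors of $1/\mu$, and one factor of $\norm{\nabla_y g(x,y)}$. This degree-two homogeneity in $\norm{\nabla_y g(x,y)}$ is precisely what makes the final inequality scale-free and lets the chosen threshold on $\beta$ conclude that $\nabla_y g(x,y)=0$. Everything else — extracting the witness $(d_x,d_y)$, the bound $\norm{d_y}\leq M_f$ from the compact-value assumption on $\ca{D}_f$, and the two appeals to Proposition~\ref{Prop_conservative_feasible} — is routine, and notably this conservative-field argument is somewhat cleaner than its Clarke-subdifferential counterpart since it works directly with an element of $\Dh(x,y)$ rather than with a generalized directional derivative.
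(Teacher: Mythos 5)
Your proposal is correct and follows essentially the same route as the paper's proof: extract the witness $(d_x,d_y)\in\ca{D}_f(x,\A(x,y))$ from $0\in\Dh(x,y)$, bound $\norm{\JAy(x,y)d_y}\leq\frac{Q_gM_f}{\mu^2}\norm{\nabla_y g(x,y)}$, use $\beta\geq\frac{2Q_gM_f}{\mu^3}$ to force $\nabla_y g(x,y)=0$, and then invoke Proposition~\ref{Prop_conservative_feasible} on $\M$ for both directions. The only cosmetic difference is that you pair the second component equation with $\nabla_y g(x,y)$ and use $\nabla_{yy}^2 g\succeq\mu I_p$ as a quadratic form, whereas the paper applies the reverse triangle inequality to $\norm{\JAy d_y+\beta\nabla_{yy}^2 g\,\nabla_y g}$; both yield the same conclusion.
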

\begin{proof}
	For any $(x, y) \in \Rn\times \Rp$ and any $(d_x, d_y) \in \ca{D}_f(x, \A(x, y))$,   the inclusion $0 \in \Dh(x, y)$ implies that there exists $(d_x, d_y) \in \ca{D}_f(x, \A(x, y))$ such that 
	\begin{equation*}
		\begin{aligned}
			&0 = d_x + \JAx(x,y) d_y + \beta \nabla_{xy}^2 g(x, y) \nabla_y g(x, y),\\
			&0 = \JAy(x, y) d_y + \beta \nabla_{yy}^2 g(x, y) \nabla_y g(x, y). 
		\end{aligned}
	\end{equation*}
	From \eqref{Eq_JAY} and Assumption \ref{Assumption_2}, it holds that $\norm{\JAy(x, y) d_y} \leq \frac{Q_gM_f}{\mu^2}\norm{\nabla_y g(x, y)}$. 
	Then we obtain that
	\begin{equation*}
		\begin{aligned}
			0 ={}& \norm{\JAy(x, y) d_y + \beta \nabla_{yy}^2 g(x, y) \nabla_y g(x, y)}\\
			\geq{}& \norm{\beta \nabla_{yy}^2 g(x, y) \nabla_y g(x, y)} - \norm{\JAy(x, y) d_y}\\
			\geq{}& \left(\mu\beta -\frac{Q_gM_f}{\mu^2}\right) \norm{\nabla_y g(x, y)}
			\geq \frac{\mu\beta}{2} \norm{\nabla_y g(x, y)},
		\end{aligned}
	\end{equation*}
	which shows that  $\nabla_y g(x, y) = 0$. Hence $(x, y) \in \M$. 
	Therefore, Proposition \ref{Prop_conservative_feasible} illustrates that $(x, y)$ is a $\ca{D}_f$-stationary point of \ref{Prob_Ori}. 
	
	On the other hand, when $(x, y)$ is a $\ca{D}_f$-stationary point of \ref{Prob_Ori},  Proposition \ref{Prop_conservative_feasible} shows that $0 \in \Dh(x, y)$. Hence  $(x, y)$ is a $\Dh$-stationary point of \ref{Prob_Pen} by Definition \ref{Defin_FOSP_Pen_field}. 
\end{proof}

	As illustrated in Remark \ref{Rmk_cf_subdifferential}, the most important example for us is $\ca{D}_f = \partial f$, and it is usually easy to compute an approximation for $\partial f(x, y)$ in practice through some randomized approaches \cite{burke2002approximating,burke2005a,duchi2012randomized,yousefian2012stochastic,nesterov2017random,burke2020gradient,lin2022gradient}. Therefore, we present the following corollary to illustrate the equivalence between \ref{Prob_Ori} and \ref{Prob_Pen} when we choose $\D_f$ as $\partial f$ in \eqref{Eq_Defin_Dh}. 
	\begin{coro}
		\label{Coro_conservative_equivalence}
		Suppose $\beta \geq \frac{2Q_gM_f}{\mu^3}$ and $\D_h$ is chosen by \eqref{Eq_Defin_Dh} with $\D_f = \partial f$. Then $(x, y) \in \Rn\times \Rp$ is a first-order stationary point of \ref{Prob_Ori} if and only if $(x, y)$ is a $\Dh$-stationary point of \ref{Prob_Pen}. 
	\end{coro}
	\begin{proof}
		When $\D_f = \partial f$ in \eqref{Eq_Defin_Dh}, the corresponding conservative field $\D_h$ is set as 
		\begin{equation*}
			\D_h(x, y) = \left\{   \left[\begin{smallmatrix}
				&d_x + \JAx(x,y) d_y + \beta \nabla_{xy}^2 g(x, y) \nabla_y g(x, y)\\
				&\JAy(x,y)d_y + \beta \nabla_{yy}^2 g(x, y) \nabla_y g(x, y) \\
			\end{smallmatrix}\right] :   \left[\begin{smallmatrix}
				&d_x \\
				&d_y \\
			\end{smallmatrix}\right] \in \partial_f(x, \A(x, y)) \right\}. 
		\end{equation*} 
		Then from Theorem \ref{Theo_conservative_equivalence} and Definition \ref{Defin_FOSP_Ori}, whenever $(x, y)$ is a $\Dh$-stationary point of \ref{Prob_Pen} with $\D_f = \partial f$ in \eqref{Eq_Defin_Dh}, $(x, y)$ is a $\partial f$-stationary point, and hence it is a first-order stationary point of \ref{Prob_Ori}.  
		
		On the other hand, when $(x, y)$ is a first-order stationary point of \ref{Prob_Ori},  Proposition \ref{Prop_conservative_feasible} directly shows that $0 \in \D_h(x, y)$, thus we complete the proof. 
	\end{proof}

\section{Algorithmic Design}
Subgradient method and its variants play important roles in minimizing nonsmooth functions that are not necessarily regular, particularly in training deep neural networks involving nonsmooth activation functions. Recently, \cite{davis2020stochastic} shows the global convergence for applying subgradient methods in minimizing nonsmooth functions based on their Clarke subdifferentials. Moreover, \cite{bolte2021conservative} introduces the concept of conservative field, which overcomes the limitations of Clarke subdifferential, and further explains the behavior of stochastic subgradient methods  when they are applied to train nonsmooth neural networks with automatic differentiation algorithms.  Furthermore, \cite{bolte2021conservative,castera2021inertial,bianchi2022convergence} establish the convergence properties for some subgradient methods that are developed from the conservative field of the objective function, as they are implemented in practice. 

In this section, we aim to design subgradient methods to solve \ref{Prob_Pen} based on the formulation of $\Dh$. 
In Proposition \ref{potential}, we show that $\Dh$ is a conservative field that admits $h$ as the potential function. Then various existing subgradient approaches \cite{bolte2021conservative,castera2021inertial,bianchi2022convergence} can be directly applied to \ref{Prob_Pen} from the explicit formulation of $\Dh$. However, it may be expensive to calculate the $\nabla_{xyy}^3g$ and $\nabla_{yyy}^3g$ in practice, hence computing $\Dh(x, y)$ exactly may be expensive and impractical. 

To this end, we first propose a general framework for applying subgradient methods to solve \ref{Prob_Pen}, which enables the inexact evaluation of $\Dh$. Then we propose several different set-valued mappings $\hDh$, $\hDp$ and $\hDs$, all of which approximates $\Dh$ and avoid computing the third-order derivatives of $g$. Based on these set-valued mappings, we design several subgradient methods that adopt inexact evaluations to achieve better efficiency.  Moreover, we demonstrate that the global convergence for these subgradient-based methods directly follows from the proposed framework in Section \ref{Subsection_framework}.

\subsection{A unified framework for subgradient-based methods}
\label{Subsection_framework}
%	
%	In Section \ref{Subsection_equivalence}, we show the equivalence between \ref{Prob_Ori} and \ref{Prob_Pen}. Therefore, various of existing  unconstrained nonsmooth optimization approaches, together with their theoretical results, can be  applied to solve \ref{Prob_Pen} and yield a stationary point for \ref{Prob_Ori}. 

In this subsection, we utilize the conservative field $\Dh$ to develop a framework for applying subgradient methods to solve \ref{Prob_Pen}. 
We first consider the iteration sequence $\{(\xk, \yk)\}$ generated by the following updating scheme that generalizes the subgradient methods,
\begin{equation}
	\label{Iter_framework}
	\xkp = \xk - \etak\left( u_{x,k} + \xi_{x,k} \right),\quad \text{and} \quad 
	\ykp = \yk - \etak\left( u_{y,k} + \xi_{y,k} \right).
\end{equation}
Here $\etak>0$ refers to the stepsize, $(u_{x,k}, u_{y,k})$ should be thought as an approximate descent direction for $h(x, y)$ at $(x_k, y_k)$, Moreover, $\xi_{x,k}$ and $\xi_{y,k}$ denote the ``errors'' introduced by stochasticity and inexact evaluation. Similar to \cite{davis2020stochastic}, we stipulate the following assumptions on \eqref{Iter_framework}.
\begin{assumpt}
	\label{Assumption_framework}
	
	\begin{enumerate}[label=(\alph*)]
		\item The generated iterates $\{(\xk, \yk)\}$ are uniformly bounded: 
		$\sup_{k >0} \norm{\xk} + \norm{\yk} <+\infty$.
		\item The stepsizes are nonnegative, square summable, but not summable:
		\begin{equation*}
			\etak >0,\quad \sum_{k = 0}^{+\infty} \etak = +\infty, \quad \text{and} \quad \sum_{k = 0}^{+\infty} \etak^2 < +\infty. 
		\end{equation*}
		\item The series of weighted noise is convergent. That is, there exists $v_{x} \in \bb{R}^n$ and $v_y \in \bb{R}^p$, such that $\lim\limits_{N\to +\infty} ~ \sum\limits_{k = 0}^{N} \etak\xi_{x,k} = v_x$ and $\lim\limits_{N\to +\infty} ~ \sum\limits_{k = 0}^{N} \etak\xi_{y,k} = v_y$.
		\item There exists a set-valued mapping $\ca{D}(x, y)$ that has closed graph and compact convex values.  Moreover, $\ca{D}$ has the property that for any sequence $\{(x_{k_j}, y_{k_j})\}$ that converges to a point $(\tilde{x}, \tilde{y})$ and  any unbounded increasing sequence $\{k_j\}$, it holds that $
		\lim\limits_{N\to +\infty}\mathrm{dist}\left( \frac{1}{N}\sum\limits_{j = 1}^{N}\left[\begin{matrix}
			u_{x, k_j} \\
			u_{y, k_j}
		\end{matrix}\right] , \ca{D}(\tilde{x}, \tilde{y})\right) = 0$.
		\item The set $\{ h(x, y): 0 \in \ca{D}(x, y) \}$ has empty interior, i.e. its complementary is dense in $\bb{R}$. 
		\item There exists a constant $\delta>0$ such that for any $(x, y) \in \Rn \times \Rp$ and  any $w \in \ca{D}(x, y)$, it holds that $
		\sup_{\zeta \in \Dh(x, y)}~ \zeta\tp w \geq \delta \norm{w}^2$. 
	\end{enumerate}
\end{assumpt}

Assumption \ref{Assumption_framework}(a)-(b) are common assumptions in various  existing works \cite{davis2020stochastic,bolte2021conservative, castera2021inertial}. Assumption \ref{Assumption_framework}(c) is a mild assumption that controls the growth of the noise sequence $\{(\xi_{x,k}, \xi_{y,k})\}$ as the stepsize  decreases, which can be satisfied by the stochastic subgradient method described in \cite{davis2020stochastic}.  Moreover, Assumption \ref{Assumption_framework}(d) illustrates how $(u_{x,k}, u_{y,k})$ approximates $\ca{D}(\xk, \yk)$.  Assumption \ref{Assumption_framework}(e) is the weak Sard's condition \cite[Assumption B(1), Assumption F(1)]{davis2020stochastic}, which holds whenever $h$ is definable and $\D = \partial h$ \cite[Lemma 5.7]{davis2020stochastic}.   Furthermore,  Assumption \ref{Assumption_framework}(f) implies the descent condition in \cite[Assumption B(2)]{davis2020stochastic}, as illustrated in the following proposition.

\begin{prop}
	\label{Prop_path_descrease_framework}
	Suppose Assumption \ref{Assumption_framework} holds. Let $\gamma: \bb{R}_+ \to \Rn \times \Rp$ be any absolutely continuous path such that  the differential inclusion $\gamma'(t) \in - \ca{D}(\gamma(t))$ holds for  a.e. $t \in \bb{R}_+$. 
	Then the following inequality holds for any $t >0$,
	\begin{equation*}
		h(\gamma(t))- h(\gamma(0)) \leq -\delta\int_{0}^t \mathrm{dist}\left( 0, \ca{D}(\gamma(\tau)) \right)^2  \mathrm{d}\tau. 
	\end{equation*}
\end{prop}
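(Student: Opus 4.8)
The plan is to prove the inequality
\[
h(\gamma(t)) - h(\gamma(0)) \leq -\delta \int_0^t \mathrm{dist}\left( 0, \ca{D}(\gamma(\tau)) \right)^2 \mathrm{d}\tau
\]
by exploiting that $h$ is a potential function for the conservative field $\Dh$ (Proposition \ref{potential}), and combining this with the chain rule for conservative fields along the absolutely continuous path $\gamma$. The key link is Assumption \ref{Assumption_framework}(f), which couples the ``approximating'' field $\ca{D}$ to $\Dh$: for any $w \in \ca{D}(x,y)$, some $\zeta \in \Dh(x,y)$ satisfies $\zeta\tp w \geq \delta \norm{w}^2$.

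First I would recall the fundamental fact that since $h$ is a potential function for the conservative field $\Dh$, for any absolutely continuous curve $\gamma$, the composition $t \mapsto h(\gamma(t))$ is absolutely continuous and its derivative satisfies $\frac{\mathrm{d}}{\mathrm{d}t} h(\gamma(t)) = \inner{\zeta, \gamma'(t)}$ for \emph{every} $\zeta \in \Dh(\gamma(t))$ and a.e. $t$ --- this is the defining property of the conservative field applied along $\gamma$ (see \cite[Lemma 2 / Remark following Definition]{bolte2021conservative}, or equivalently Definition \ref{Defin_conservative_mapping} with $m=1$). Then I would integrate: $h(\gamma(t)) - h(\gamma(0)) = \int_0^t \inner{\zeta(\tau), \gamma'(\tau)} \mathrm{d}\tau$ where $\zeta(\tau)$ can be chosen freely in $\Dh(\gamma(\tau))$ at each $\tau$ (with measurable selection).

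Next, for a.e. $\tau$, the differential inclusion gives $\gamma'(\tau) \in -\ca{D}(\gamma(\tau))$, so $-\gamma'(\tau) \in \ca{D}(\gamma(\tau))$; applying Assumption \ref{Assumption_framework}(f) with $w = -\gamma'(\tau)$ yields the existence of $\zeta(\tau) \in \Dh(\gamma(\tau))$ with $\inner{\zeta(\tau), -\gamma'(\tau)} \geq \delta \norm{\gamma'(\tau)}^2$, i.e. $\inner{\zeta(\tau), \gamma'(\tau)} \leq -\delta \norm{\gamma'(\tau)}^2$. Using this particular measurable selection $\zeta(\tau)$ in the integral representation (and noting that the value of $h(\gamma(t)) - h(\gamma(0))$ is independent of which element of $\Dh(\gamma(\tau))$ is picked), I obtain
\[
h(\gamma(t)) - h(\gamma(0)) = \int_0^t \inner{\zeta(\tau), \gamma'(\tau)} \mathrm{d}\tau \leq -\delta \int_0^t \norm{\gamma'(\tau)}^2 \mathrm{d}\tau.
\]
Finally, since $\gamma'(\tau) \in -\ca{D}(\gamma(\tau))$ implies $\norm{\gamma'(\tau)} \geq \mathrm{dist}(0, \ca{D}(\gamma(\tau)))$ --- in fact $-\gamma'(\tau)$ is \emph{some} element of $\ca{D}(\gamma(\tau))$, whose norm is at least the distance from $0$ to that (compact, convex) set --- the desired conclusion follows.

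The main obstacle I anticipate is the measurability/selection bookkeeping: one must verify that $\tau \mapsto \zeta(\tau)$ can be chosen measurably so that both the integral $\int_0^t \inner{\zeta(\tau), \gamma'(\tau)}\mathrm{d}\tau$ equals $h(\gamma(t)) - h(\gamma(0))$ (which requires the path-independence / conservativity property, valid for \emph{any} measurable selection from $\Dh \circ \gamma$) and that the pointwise bound $\inner{\zeta(\tau),\gamma'(\tau)} \leq -\delta\norm{\gamma'(\tau)}^2$ holds a.e. Here one invokes that $\Dh$ has closed graph with compact convex values and $\gamma'$ is measurable, so the set-valued map $\tau \rightrightarrows \{\zeta \in \Dh(\gamma(\tau)) : \inner{\zeta, -\gamma'(\tau)} \geq \delta\norm{\gamma'(\tau)}^2\}$ is measurable with nonempty closed values, and a measurable selection exists by the Kuratowski–Ryll-Nardzewski theorem; this is essentially the argument used in \cite[proof of Lemma 5.2]{davis2020stochastic}. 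The rest is routine integration.
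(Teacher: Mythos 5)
Your proof is correct and follows essentially the same route as the paper's: both use the fact that $h$ is a potential function for $\Dh$ to express $h(\gamma(t))-h(\gamma(0))$ as a path integral of $\inner{\zeta,\gamma'(\tau)}$, apply Assumption \ref{Assumption_framework}(f) with $w=-\gamma'(\tau)\in\ca{D}(\gamma(\tau))$, and then bound $\norm{\gamma'(\tau)}\geq \mathrm{dist}\left(0,\ca{D}(\gamma(\tau))\right)$. The only difference is cosmetic: the paper puts the $\min$ over $\Dh(\gamma(\tau))$ inside the integral (Definition \ref{Defin_conservative_field_path_int}), which renders your measurable-selection bookkeeping unnecessary.
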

\begin{proof}
	Notice that $h$ is the potential function of the conservative field $\Dh$. Therefore, it follows from Definition \ref{Defin_conservative_field_path_int} that 
	\begin{equation*}
		\begin{aligned}
			h(\gamma(t))-  h(\gamma(0)) =  \int_{0}^t \inf_{\zeta \in \Dh(\gamma(t))}\inner{\zeta, -\gamma'(t)} \mathrm{d}\tau
			%			\leq -\delta\int_{0}^t  \norm{\gamma'(t)}^2 \mathrm{d}\tau
			\leq -\delta\int_{0}^t  \mathrm{dist}\left( 0, \ca{D}(\gamma(\tau)) \right)^2  \mathrm{d}\tau,
		\end{aligned}
	\end{equation*}
	and this completes the proof. 
\end{proof}

	\begin{prop}
		\label{Prop_closed_graph_convex_approx}
		For any set-valued mapping $\ca{D}: \bb{R}^m \rightrightarrows \bb{R}^s$, suppose $\ca{D}$ is compact and convex valued and has closed graph, then for any $\tilde{w} \in \bb{R}^m$, any sequence $\{w_k\}$ that converges to $\tilde{w}$ and any $\{u_k\}$ that satisfies $ \lim\limits_{k\to +\infty}  \mathrm{dist}\left(u_k, \ca{D}(w_k) \right) = 0$, it holds that 
		\begin{equation*}
			\lim_{N\to +\infty}\mathrm{dist}\left( \frac{1}{N}\sum_{k = 1}^{N}  u_k , \ca{D}(\tilde{w})\right) = 0.
		\end{equation*}
	\end{prop}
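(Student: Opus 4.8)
The plan is to derive the statement from the outer semicontinuity of $\ca{D}$ combined with a Cesàro-averaging estimate that uses the convexity of $\ca{D}(\tilde{w})$. First, I would upgrade the closed-graph hypothesis to a one-sided uniform inclusion near $\tilde{w}$: for every $\varepsilon > 0$ there is $\delta > 0$ such that $\ca{D}(w) \subseteq \ca{D}(\tilde{w}) + \varepsilon \bb{B}$ for all $w$ with $\norm{w - \tilde{w}} \le \delta$, where $\bb{B}$ is the closed unit ball. I would prove this by contradiction: a failure at some $\varepsilon_0 > 0$ would yield $w_k \to \tilde{w}$ and $u_k \in \ca{D}(w_k)$ with $\mathrm{dist}(u_k, \ca{D}(\tilde{w})) \ge \varepsilon_0$; since $\{u_k\}$ is bounded (immediate for the mappings $\ca{D}$ to which this proposition is applied in Section~\ref{Subsection_framework}, which are uniformly bounded; in general it follows from local boundedness of $\ca{D}$), we pass to a convergent subsequence $u_{k_j} \to u$, the closed graph forces $u \in \ca{D}(\tilde{w})$, and this contradicts $\mathrm{dist}(u_{k_j}, \ca{D}(\tilde{w})) \ge \varepsilon_0$.

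Next, fixing $\varepsilon > 0$, I would choose $K$ so large that $\norm{w_k - \tilde{w}} \le \delta$, hence $u_k \in \ca{D}(\tilde{w}) + \varepsilon\bb{B}$, for all $k \ge K$. Since $\ca{D}(\tilde{w})$ is convex, any average of vectors in $\ca{D}(\tilde{w}) + \varepsilon\bb{B}$ again lies in $\ca{D}(\tilde{w}) + \varepsilon\bb{B}$; in particular $a_N := \frac{1}{N-K}\sum_{k=K+1}^{N} u_k$ satisfies $\mathrm{dist}(a_N, \ca{D}(\tilde{w})) \le \varepsilon$, and $\{a_N\}$ is bounded because $\ca{D}(\tilde{w})$ is compact. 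Writing
\[
\frac{1}{N}\sum_{k=1}^{N} u_k \;=\; \frac{1}{N}\sum_{k=1}^{K} u_k \;+\; a_N \;-\; \frac{K}{N}\,a_N ,
\]
the first term tends to $0$ as a fixed finite sum divided by $N$, and the last term tends to $0$ as a bounded vector scaled by $K/N$; therefore $\mathrm{dist}\!\left(\tfrac{1}{N}\sum_{k=1}^{N} u_k,\, \ca{D}(\tilde{w})\right) \le \varepsilon + o(1)$, and taking $\limsup_{N\to\infty}$ followed by $\varepsilon \downarrow 0$ gives the claim.

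I expect the only delicate point to be the boundedness of $\{u_k\}$ used in the first step — closed graph together with compact values does not, at the abstract level, literally force this, but it does hold for every concrete $\ca{D}$ to which the proposition is subsequently applied, so the argument goes through in all the cases we need. The remaining ingredients — the uniform outer approximation and the telescoping/averaging computation — are routine, with the convexity of $\ca{D}(\tilde{w})$ being the single structural feature that confines the running average to the $\varepsilon$-neighborhood of $\ca{D}(\tilde{w})$.
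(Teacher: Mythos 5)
Your proof is correct and takes a genuinely different route from the paper's. The paper argues by contradiction: assuming the averaged distance stays above some $\varepsilon_0$ along $N_j \to \infty$, it uses the convexity of $\ca{D}(\tilde{w})$ (via convexity of the distance function) to extract individual terms $u_{k_j}$ with $\mathrm{dist}(u_{k_j},\ca{D}(\tilde{w})) \ge \varepsilon_0/2$, shows by an averaging estimate that the indices $k_j$ may be taken unbounded (this is how the paper disposes of the ``head'' of the sequence), and then invokes the closed graph to force cluster points of $\{u_{k_j}\}$ into $\ca{D}(\tilde{w})$, a contradiction. You instead prove directly a uniform outer-semicontinuity inclusion $\ca{D}(w) \subseteq \ca{D}(\tilde{w}) + \varepsilon\bb{B}$ near $\tilde{w}$, and then split the Ces\`aro average into a head term of order $K/N$ and a tail average that stays in the convex set $\ca{D}(\tilde{w}) + \varepsilon\bb{B}$; here convexity is used on the enlarged set rather than on the distance function. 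Your version localizes the role of the closed graph in one reusable lemma and is arguably more transparent and quantitative, while the paper's is a single self-contained contradiction argument. The ``delicate point'' you flag --- boundedness of $\{u_k\}$, i.e.\ local boundedness of $\ca{D}$ near $\tilde{w}$ --- is not a defect specific to your argument: the paper's proof has exactly the same implicit reliance when it speaks of ``any cluster point of $\{u_{k_j}\}$'' without establishing that cluster points exist (closed graph plus compact values alone does not give this, e.g.\ $\ca{D}(w)=\{1/\lvert w\rvert\}$ for $w\neq 0$, $\ca{D}(0)=\{0\}$), and, as you note, it is harmless for every mapping to which the proposition is applied in Section 4, since those are built from $\ca{D}_f$ (uniformly bounded by $M_f$ under Assumption 2) and continuous derivative terms evaluated along bounded iterates.
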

	\begin{proof}
		We first assume that 
		the argument to be proved is not true. Then there exists a constant $\varepsilon_0 >0$,
		a sequence  $\{w_k\}$ converging to $\tilde{w}$, a sequence $\{u_k\}$ that satisfies $ \lim\limits_{k\to +\infty}  \mathrm{dist}\left(u_k, \ca{D}(w_k) \right) = 0$ and a sequence $\{N_j\} \subset \ca{N}$ satisfying $N_j \to +\infty$, such that
		\begin{equation}
			\label{Eq_temp_1}
			\mathrm{dist}\left(\frac{1}{N_j}\sum_{k = 1}^{N_j} u_k , \ca{D}(\tilde{w})\right) \geq \varepsilon_0. 
		\end{equation}
		
		From the convexity of $\ca{D}(\tilde{w})$, we conclude that for any $j \geq 0$, there exists an index $k_j \leq N_j$ such that 
		\begin{equation}
			\label{Eq_temp_2}
			\mathrm{dist}\left(u_{k_j} , \ca{D}(\tilde{w})\right) \geq \frac{\varepsilon_0}{2}. 
		\end{equation}
		We claim that we can always choose a sequence $\{k_j\}$ such that $k_j \to +\infty$. Otherwise, for any  
		$
		N > \sup_{j \geq 0} k_j + \left(\frac{2}{\varepsilon_0} \right) \sum_{i = 1}^{\sup_{j \geq 0} k_j} \mathrm{dist}\left( u_i, \ca{D}(\tilde{w}) \right) ,
		$
		it holds that 
		\begin{equation*}
			\small
			\begin{aligned}
				&\mathrm{dist}\left( \frac{1}{N}\sum_{k = 1}^{N} u_k , \ca{D}(\tilde{w})\right) \leq \frac{1}{N}\sum_{k = 1}^{N} \mathrm{dist} \left( u_k, \ca{D}(\tilde{w}) \right) 
				\leq \frac{1}{N } \sum_{k = 1}^{\sup_{j \geq 0} k_j } \mathrm{dist} \left( u_k, \ca{D}(\tilde{w}) \right) + \frac{\varepsilon_0}{2} < \varepsilon_0,
			\end{aligned}
		\end{equation*}
		which contradicts \eqref{Eq_temp_1} and further verifies our claim. 
		
		Therefore, for the selected sequence of indices $\{k_j\}$, it holds that $\lim\limits_{j\to +\infty} w_{k_j} = \tilde{w}$, and $$\lim_{j\to +\infty} \mathrm{dist}\left((w_{k_j}, u_{k_j}) , \mathrm{graph}(\ca{D})\right) = 0.$$
		Since $\ca{D}$ has closed graph,  any cluster point of $\{u_{k_j}\}$ lies in $\ca{D}(\tilde{w})$, which further leads to $$
		\mathop{\lim\inf}_{j\to +\infty} ~	\mathrm{dist}\left(u_{k_j} , \ca{D}(\tilde{w})\right) = 0.$$ But this contradicts \eqref{Eq_temp_2}. Thus the proof is completed by contradiction. 
	\end{proof}

\begin{theo}
	\label{Theo_framework_convergence}
	Suppose Assumption \ref{Assumption_framework} holds. Then for the sequence $\{(\xk, \yk)\}$ generated from \eqref{Iter_framework}, all its limit point lies in $\{ (x, y) \in \Rn\times \Rp: 0 \in \ca{D}(x, y) \}$. Moreover, the sequence of function values $\{h(\xk, \yk)\}$ converges. 
\end{theo}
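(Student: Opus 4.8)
The plan is to read this as a standard instance of the differential–inclusion (``ODE'') method for stochastic approximation, exactly along the lines of \cite{davis2020stochastic}: interpret \eqref{Iter_framework} as a noisy Euler discretization of the differential inclusion $\gamma'(t)\in -\ca{D}(\gamma(t))$, show that the interpolated process is an asymptotic pseudotrajectory of this inclusion, deduce that the limit set of $\{(\xk,\yk)\}$ is internally chain transitive for the induced flow, and finally use $h$ as a strict Lyapunov function to force that limit set into the stationary set $\ca{S}:=\{(x,y):0\in\ca{D}(x,y)\}$. The key preparatory observation is that $h$ really is such a Lyapunov function: by Proposition \ref{potential}, $h$ is the potential of $\Dh$, and combining this with the angle condition Assumption \ref{Assumption_framework}(f) via Proposition \ref{Prop_path_descrease_framework} gives, for every solution curve $\gamma$ of $\gamma'\in-\ca{D}(\gamma)$,
\[
h(\gamma(t))-h(\gamma(0)) \le -\delta\int_0^t \mathrm{dist}\bigl(0,\ca{D}(\gamma(\tau))\bigr)^2\,\mathrm{d}\tau ;
\]
since $\ca{D}$ has closed graph, this shows $h$ is strictly decreasing along any solution curve not entirely contained in $\ca{S}$, and constant along curves inside $\ca{S}$.

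First I would write $z_k=(\xk,\yk)$, $u_k=(u_{x,k},u_{y,k})$, $\xi_k=(\xi_{x,k},\xi_{y,k})$, and build the continuous interpolated path $\bar z(\cdot)$ on the stretched time scale $\tau_k=\sum_{i<k}\etak[i]$. Then, using Assumption \ref{Assumption_framework}(a) (precompactness of the iterates, hence local boundedness of the relevant directions), (b) (the Robbins--Monro stepsize conditions, in particular $\etak\to 0$ so successive increments vanish and $\bar z$ has a connected limit set), (c) (the weighted noise series converges, so its tails contribute a perturbation tending to $0$), and (d) together with the fact that $\ca{D}$ is closed-graph and compact-convex-valued (so that Cesàro averages of the $u_k$ along convergent subsequences are absorbed into $\ca{D}$ up to a vanishing error, cf.\ Proposition \ref{Prop_closed_graph_convex_approx}), I would verify the asymptotic pseudotrajectory property of $\bar z$ with respect to the flow of $\gamma'\in-\ca{D}(\gamma)$. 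The standard theory then yields that the limit set $L$ of $\{z_k\}$ is nonempty, compact, connected, and internally chain transitive for this flow.

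Next I would invoke the Lyapunov criterion for internally chain transitive sets: if a flow admits a continuous function that is strictly decreasing off the equilibrium-like set $\ca{S}$, and if the set of its values on $\ca{S}$ has empty interior, then every internally chain transitive set is contained in $\ca{S}$ and the function is constant on it. Here the ``strictly decreasing off $\ca{S}$'' part is the displayed inequality above, and the ``empty interior of $h(\ca{S})$'' part is precisely the weak Sard condition Assumption \ref{Assumption_framework}(e). Applying this with the set $L$ from the previous paragraph gives $L\subseteq\ca{S}$, i.e.\ every limit point of $\{(\xk,\yk)\}$ satisfies $0\in\ca{D}(x,y)$, and $h\equiv\mathrm{const}$ on $L$; continuity of $h$ and compactness of $L$ then force $h(\xk,\yk)$ to converge to that common value.

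I expect the main obstacle to be the asymptotic-pseudotrajectory step, specifically handling the fact that Assumption \ref{Assumption_framework}(d) only controls the $u_k$ in a time-averaged (Cesàro) sense rather than pointwise: one has to split $u_k$ into a component that is (approximately) a selection of $\ca{D}(z_k)$ and a fluctuation whose running averages vanish, and then interleave this with the noise bookkeeping from (c), which is the delicate part of the estimates in \cite{davis2020stochastic}. A secondary point to be careful about is that $\ca{D}$ is \emph{not} assumed to be a conservative field for $h$ — only $\Dh$ is — so the descent/Lyapunov property cannot be obtained from a path-integral identity for $\ca{D}$ and must be routed entirely through Assumption \ref{Assumption_framework}(f) and Proposition \ref{Prop_path_descrease_framework}.
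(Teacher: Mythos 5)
Your proposal is correct and follows essentially the same route as the paper: the paper verifies that Assumption \ref{Assumption_framework}(a)--(d) give Assumption A of \cite{davis2020stochastic} and that (e)--(f) together with Proposition \ref{Prop_path_descrease_framework} give Assumption B, and then invokes Theorem 3.2 of \cite{davis2020stochastic} as a black box, which is precisely the asymptotic-pseudotrajectory/chain-transitivity/Lyapunov argument you spell out. Your additional care about the Ces\`aro-averaged form of (d) and about routing the descent property through $\Dh$ rather than $\ca{D}$ matches the paper's use of Propositions \ref{Prop_closed_graph_convex_approx} and \ref{Prop_path_descrease_framework}.
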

\begin{proof}
	Assumption \ref{Assumption_framework}(a)-(d) imply the validity of Assumption A in \cite{davis2020stochastic}. Moreover, Assumption \ref{Assumption_framework}(e)-(f) and Proposition \ref{Prop_path_descrease_framework} show that the Assumption B in \cite{davis2020stochastic} holds. Then  the  proof directly follows from Theorem 3.2 in  \cite{davis2020stochastic}. 
\end{proof}

\subsection{Basic subgradient methods}
In this subsection, we first propose a set-valued mapping $\hDh(x, y)$ that has compact values and  satisfies Assumption \ref{Assumption_framework}(f). Based on $\hDh(x, y)$, we develop a subgradient method as illustrated in Algorithm \ref{Alg:subgradient_method}, where the update direction in each iteration is approximately chosen from $\hDh(x, y)$. Then we establish the global convergence of Algorithm \ref{Alg:subgradient_method} directly from our proposed framework. 

%\subsubsection{Developing set-valued mapping}
\begin{defin}
	For any given $(x, y) \in \Rn\times \Rp$, we define the set-valued mapping $\hDh: \Rn\times \Rp \rightrightarrows \Rn\times \Rp$ as 
	\begin{equation*}
		\hDh(x, y) := \left\{\left[ \begin{smallmatrix}
			&d_x - \nabla_{xy}^2g(x, y) \left(\nabla_{yy}^2g(x, y)^{-1} d_y - \beta \nabla_y g(x, y)\right) \\
			&\beta  \nabla_y g(x, y)
		\end{smallmatrix} \right]:    \left[\begin{smallmatrix}
			&d_x \\
			&d_y \\
		\end{smallmatrix}\right] \in \ca{D}_f(x, \A(x, y))
		\right\}. 
	\end{equation*}
	
\end{defin}

It is easy to verify that $\hDh$ has closed graph. Moreover, compared with $\Dh$, the formulation of $\hDh$ avoids the third-order derivatives of $g$. Therefore, computing an element from $\hDh$ can be potentially more efficient than directly computing one from $\Dh$. 
%	Moreover, we denote the $x$-coordinates of $\ca{D}(x, y)$ as
%	\begin{equation}
%		\hDhx(x, y) := \left\{ 
%		d_x - \nabla_{xy}^2g(x, y) \left(\nabla_{yy}^2g(x, y)^{-1} d_y - \beta \nabla_y g(x, y)\right)  :    \left[\begin{smallmatrix}
%			&d_x \\
%			&d_y \\
%		\end{smallmatrix}\right] \in \ca{D}_f(x, \A(x, y)) \right\}.
%	\end{equation}

\begin{prop}
	\label{Prop_field_approx}
	Suppose $\beta \geq \frac{2M_fQ_g}{\mu^3}$. Then for any given $(x, y) \in \Rn\times \Rp$, it is a $\Dh$-stationary point of \ref{Prob_Pen} if and only if $0 \in \hDh(x, y)$. 
\end{prop}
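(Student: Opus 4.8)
The plan is to show that each of the two inclusions $0\in\Dh(x,y)$ and $0\in\hDh(x,y)$ already forces $(x,y)\in\M$, and that on $\M$ the two set-valued mappings $\Dh$ and $\hDh$ coincide as sets. The claimed equivalence then follows by chaining these two facts together in both directions, with Definition~\ref{Defin_FOSP_Pen_field} (which says $(x,y)$ is a $\Dh$-stationary point of \ref{Prob_Pen} precisely when $0\in\Dh(x,y)$).

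First I would dispose of the trivial half. If $0\in\hDh(x,y)$, then reading off the second block in the definition of $\hDh$ gives $\beta\nabla_y g(x,y)=0$, and since $\beta>0$ this yields $\nabla_y g(x,y)=0$, i.e. $(x,y)\in\M$. This is exactly the point of introducing $\hDh$: feasibility becomes visible without touching any third-order derivative. For the other side, if $0\in\Dh(x,y)$ and $\beta\ge 2Q_gM_f/\mu^3$, then $(x,y)\in\M$ as well; this is precisely the feasibility estimate already carried out inside the proof of Theorem~\ref{Theo_conservative_equivalence} — one bounds $\norm{\JAy(x,y)d_y}\le (Q_gM_f/\mu^2)\norm{\nabla_y g(x,y)}$ via \eqref{Eq_JAY} and Assumption~\ref{Assumption_2}, and then the triangle inequality applied to the second block $\JAy(x,y)d_y+\beta\nabla_{yy}^2 g(x,y)\nabla_y g(x,y)=0$ forces $(\mu\beta-Q_gM_f/\mu^2)\norm{\nabla_y g(x,y)}\le 0$, hence $\nabla_y g(x,y)=0$. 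So I would simply invoke Theorem~\ref{Theo_conservative_equivalence} (together with Proposition~\ref{Prop_conservative_feasible}) for this half rather than redo the estimate.

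Next I would record the coincidence on $\M$. When $\nabla_y g(x,y)=0$ we have $\A(x,y)=y$, so $\ca{D}_f(x,\A(x,y))=\ca{D}_f(x,y)$; moreover $\JAy(x,y)=0$ and $\JAx(x,y)=-\nabla_{xy}^2 g(x,y)\,\nabla_{yy}^2 g(x,y)^{-1}$ by \eqref{Eq_JAX}--\eqref{Eq_JAY}, and every term carrying a factor of $\nabla_y g(x,y)$ drops out. Substituting into \eqref{Eq_Defin_Dh} and into the definition of $\hDh$, both reduce to the identical set
\[
\left\{ \left[\begin{smallmatrix} d_x - \nabla_{xy}^2 g(x,y)\,\nabla_{yy}^2 g(x,y)^{-1} d_y \\ 0 \end{smallmatrix}\right] : \left[\begin{smallmatrix} d_x \\ d_y \end{smallmatrix}\right]\in\ca{D}_f(x,y) \right\}.
\]
Hence, for $(x,y)\in\M$, we have $0\in\Dh(x,y)$ if and only if $0\in\hDh(x,y)$, and this common condition is exactly $\ca{D}_f$-stationarity of \ref{Prob_Ori} in the sense of Definition~\ref{Defin_FOSP_Ori_field}.

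Finally I would assemble the two directions. If $0\in\hDh(x,y)$, then $(x,y)\in\M$ by the first step, so $0\in\Dh(x,y)$ by the coincidence on $\M$, i.e. $(x,y)$ is a $\Dh$-stationary point of \ref{Prob_Pen}. Conversely, if $(x,y)$ is a $\Dh$-stationary point of \ref{Prob_Pen}, then $0\in\Dh(x,y)$, so $(x,y)\in\M$ by Theorem~\ref{Theo_conservative_equivalence}, and therefore $0\in\hDh(x,y)$, again by the coincidence on $\M$. I do not expect any genuine analytic obstacle: the only estimate required is the one extracting feasibility from $0\in\Dh(x,y)$, which is already available from Theorem~\ref{Theo_conservative_equivalence}, and the only thing to be careful about is the straightforward bookkeeping that makes the $\beta$-weighted and third-order terms in the two definitions vanish identically once we restrict to $\M$.
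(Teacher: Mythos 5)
Your proposal is correct and follows essentially the same route as the paper: read off $\nabla_y g(x,y)=0$ from the second block of $\hDh$ in one direction, invoke the feasibility estimate of Theorem~\ref{Theo_conservative_equivalence} in the other, and use that $\Dh$ and $\hDh$ collapse to the same set once $\nabla_y g(x,y)=0$ (since $\A(x,y)=y$, $\JAy(x,y)=0$, and $\JAx(x,y)=-\nabla_{xy}^2 g(x,y)\nabla_{yy}^2 g(x,y)^{-1}$). The only difference is presentational: you state the coincidence of the two fields on $\M$ explicitly, whereas the paper routes the forward direction through $\ca{D}_f$-stationarity of \ref{Prob_Ori}; the content is identical.
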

\begin{proof}
	When $0 \in \hDh(x, y)$, we first conclude that  $  \nabla_y g(x, y) = 0$, which results in the inclusion $(x, y) \in \M$. 
	Moreover,  $0 \in \hDh(x, y)$ implies that there exists $ (d_x, d_y) \in \ca{D}_f(x, y)$
	such that 	$
	d_x - \nabla_{xy}^2g(x, y) \nabla_{yy}^2g(x, y)^{-1} d_y = 0$. 
	Therefore, it follows from Definition \ref{Defin_FOSP_Ori_field} that $(x, y)$ is a $\Dh$-stationary point of \ref{Prob_Pen}. 
	
	On the other hand, when $(x, y)$ is a $\Dh$-stationary point of \ref{Prob_Pen}, from Theorem \ref{Theo_conservative_equivalence}, it holds that $(x,y) \in \M$. Therefore, from the expression of $\Dh(x,y)$ and $ \hDh(x,y)$, we obtain that $0 \in \Dh(x,y) = \hDh(x,y)$ and  the proof is completed.
\end{proof}

\begin{prop}
	\label{Prop_descrease_approx}
	Suppose $\beta \geq \frac{2M_fQ_g}{\mu^3}$. Then for any given $(x, y) \in \Rn\times \Rp$ and $w \in \hDh(x, y)$, it holds that 
	\begin{equation*}
		\sup_{\zeta \in \Dh(x, y)} \inner{ w, \zeta } \geq \min\left\{1, \frac{(2-\sqrt{2})\mu}{2}\right\} \norm{w}^2. 
	\end{equation*}
\end{prop}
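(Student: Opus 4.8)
The plan is to exploit that $\hDh(x,y)$ and $\Dh(x,y)$ are parametrized by the \emph{same} set $\ca{D}_f(x,\A(x,y))$, so that each $w\in\hDh(x,y)$, say generated by $(d_x,d_y)\in\ca{D}_f(x,\A(x,y))$, has a natural companion $\zeta\in\Dh(x,y)$ generated by the \emph{same} $(d_x,d_y)$; lower bounding $\inner{w,\zeta}$ for this companion lower bounds the supremum. First I would set $H:=\nabla_{yy}^2 g(x,y)$, $B:=\nabla_{xy}^2 g(x,y)$, $r:=\nabla_y g(x,y)$, so that $w=(w_1,w_2)$ with $w_1=d_x-BH^{-1}d_y+\beta Br$ and $w_2=\beta r$. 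Substituting the explicit formulas \eqref{Eq_JAX}--\eqref{Eq_JAY} into the definition of $\Dh$ and collecting terms, one obtains the key identities $\zeta_1=w_1+e_1$ and $\zeta_2=\beta Hr+e_2$, where $e_1:=\nabla_{xyy}^3 g(x,y)[H^{-1}r]\,H^{-1}d_y$ and $e_2:=\nabla_{yyy}^3 g(x,y)[H^{-1}r]\,H^{-1}d_y$; i.e.\ passing from $\hDh$ to $\Dh$ only perturbs the first block by the third-order term $e_1$ and, in the second block, multiplies $r$ by $H$ up to the third-order term $e_2$.

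Next I would estimate $e_1$ and $e_2$: from $\mu$-strong convexity ($\norm{H^{-1}}\le 1/\mu$), from the operator bounds $\norm{\nabla_{xyy}^3 g[\cdot]},\norm{\nabla_{yyy}^3 g[\cdot]}\le Q_g\norm{\cdot}$ recorded in the preliminaries (a consequence of Assumption \ref{Assumption_1}(4)), and from $\norm{d_y}\le M_f$ (Assumption \ref{Assumption_2}), one gets $\norm{e_1},\norm{e_2}\le\frac{Q_gM_f}{\mu^2}\norm{r}$, whence the hypothesis $\beta\ge\frac{2M_fQ_g}{\mu^3}$ yields $\norm{e_1},\norm{e_2}\le\frac{\mu\beta}{2}\norm{r}=\frac{\mu}{2}\norm{w_2}$. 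Expanding
\[ \inner{w,\zeta}=\norm{w_1}^2+\inner{w_1,e_1}+\beta\inner{r,e_2}+\beta^2\inner{r,Hr}, \]
applying $\inner{r,Hr}\ge\mu\norm{r}^2$ to the last term and Cauchy--Schwarz (together with $\beta\norm{r}=\norm{w_2}$) to the two cross terms, one is left with
\[ \sup_{\zeta\in\Dh(x,y)}\inner{w,\zeta}\ \ge\ \norm{w_1}^2-\frac{\mu}{2}\norm{w_1}\norm{w_2}+\frac{\mu}{2}\norm{w_2}^2 . \]
From here the conclusion follows from an elementary estimate of the right-hand quadratic form in the nonnegative scalars $a:=\norm{w_1}$, $b:=\norm{w_2}$, bounding it below by $c\,(a^2+b^2)=c\norm{w}^2$; the $\min$ with $1$ in the statement accounts for the large-$\mu$ regime in which the $a^2$-coefficient, rather than the $b^2$-coefficient, is the binding one.

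The main obstacle is this last step, and more precisely squeezing out the sharp constant $\min\{1,\frac{(2-\sqrt 2)\mu}{2}\}$: a purely term-by-term use of $\norm{e_i}\le\frac{\mu}{2}\norm{w_2}$ followed by Cauchy--Schwarz is somewhat lossy, because $e_1$, $e_2$, and the Rayleigh quotient $\inner{r,Hr}/\norm{r}^2$ cannot all be simultaneously extremal (for instance $\inner{r,Hr}$ near its floor $\mu\norm{r}^2$ pins $r$ to the least-curvature eigenspace of $H$, which in turn constrains $\nabla_{yyy}^3 g(x,y)[H^{-1}r]$). The delicate part is therefore to group the contributions $\inner{w_1,e_1}+\beta\inner{r,e_2}$ and weigh them against $\norm{w_1}\norm{w_2}+\norm{w_2}^2$ carefully enough that the resulting $2\times2$ quadratic form in $(\norm{w_1},\norm{w_2})$ is positive definite with exactly the claimed modulus; all the remaining ingredients — the identities $\zeta_1=w_1+e_1$, $\zeta_2=\beta Hr+e_2$, the norm bounds, and the closing scalar inequality — are routine.
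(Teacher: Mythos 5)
Your overall route is the same as the paper's: you pair each $w\in\hDh(x,y)$ generated by $(d_x,d_y)\in\ca{D}_f(x,\A(x,y))$ with the element of $\Dh(x,y)$ generated by the \emph{same} $(d_x,d_y)$, peel off the third-order terms $e_1,e_2$, and bound them by $\frac{Q_gM_f}{\mu^2}\norm{\nabla_y g(x,y)}\leq\frac{\mu\beta}{2}\norm{\nabla_y g(x,y)}$ (the paper writes this as $\zeta=z_1+z_2$ with $z_2=(e_1,e_2)$ and $\norm{z_2}\leq\frac{\sqrt{2}Q_gM_f}{\mu^2}\norm{\nabla_y g}$). The genuine gap is the step you defer as the ``delicate'' but essentially routine closing estimate: it is not obtainable from the inequality you derived. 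Writing $a=\norm{w_1}$ and $b=\norm{w_2}=\beta\norm{\nabla_y g(x,y)}$, your intermediate bound is $a^2-\frac{\mu}{2}ab+\frac{\mu}{2}b^2$, and the scalar inequality $a^2-\frac{\mu}{2}ab+\frac{\mu}{2}b^2\geq\min\bigl\{1,\frac{(2-\sqrt{2})\mu}{2}\bigr\}(a^2+b^2)$ is false: already for $\mu=2$ and $a=b$ the left side is $a^2$ while the right side is $(2-\sqrt{2})\cdot 2a^2\approx 1.17\,a^2$; and for $\mu>8$, taking $a=\frac{\mu b}{4}$ makes the left side negative, whereas the claimed lower bound is $\norm{w}^2>0$. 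So no elementary estimate of your $2\times 2$ quadratic form can finish the proof --- the loss occurred earlier, in the blockwise Cauchy--Schwarz treatment of the cross terms, whose $-\frac{\mu}{2}ab$ contribution grows linearly in $\mu$ while the $a^2$ term does not. Your suggested repair (exploiting that $r$, the least-curvature eigenspace of $H$, and the third derivative cannot be simultaneously extremal) is also not what is required: the paper uses no such structural information.

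For comparison, the paper never produces a $\norm{w_1}\,\norm{\nabla_y g}$ cross term at all. It estimates the third-order part against the full vector, $\inner{w, z_2}\geq-\norm{z_2}\,\norm{w}\geq-\frac{\sqrt{2}Q_gM_f}{\mu^2}\norm{\nabla_y g}\,\norm{w}$, absorbs this entirely into the coercive term $\beta^2\mu\norm{\nabla_y g}^2$ so as to retain $\norm{w_1}^2+\frac{(2-\sqrt{2})\beta^2\mu}{2}\norm{\nabla_y g}^2$, and only then uses the identity $\norm{w}^2=\norm{w_1}^2+\beta^2\norm{\nabla_y g}^2$ to extract the constant $\min\bigl\{1,\frac{(2-\sqrt{2})\mu}{2}\bigr\}$ --- this is precisely where the factor $\frac{(2-\sqrt{2})\mu}{2}$ originates. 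To complete your argument you would have to replace your penultimate display by an absorption of this type (keeping the $\norm{w_1}^2$ term intact and charging the entire third-order contribution to the $\norm{\nabla_y g}^2$ term), rather than appeal to a positive-definiteness property that your quadratic form does not possess.
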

\begin{proof}
	For any $(d_x, d_y) \in \ca{D}_f(x, \A(x, y))$, let $$
	w = \left[ \begin{smallmatrix}
		&d_x - \nabla_{xy}^2g(x, y) \left(\nabla_{yy}^2g(x, y)^{-1} d_y - \beta \nabla_y g(x, y)\right) \\
		&\beta  \nabla_y g(x, y)
	\end{smallmatrix} \right] \in  \hDh(x, y), $$
	and define 
	\begin{equation*}
		\small
		z_1 = \left[ \begin{smallmatrix}
			&d_x - \nabla_{xy}^2g(x, y) \left(\nabla_{yy}^2g(x, y)^{-1} d_y - \beta \nabla_y g(x, y)\right) \\
			&\beta \nabla_{yy}^2g(x, y) \nabla_y g(x, y)
		\end{smallmatrix} \right],
		z_2 = 
		\left[ \begin{smallmatrix}
			& \nabla_{xyy}^3 g(x, y)[\nabla_{yy}^2g(x, y)^{-1}\nabla_y g(x, y)] \nabla_{yy}^2g(x, y)^{-1} d_y \\
			&  \nabla_{yyy}^3 g(x, y)[\nabla_{yy}^2g(x, y)^{-1}\nabla_y g(x, y)] \nabla_{yy}^2g(x, y)^{-1} d_y
		\end{smallmatrix} \right].
	\end{equation*}
	Then from the expression of $\Dh$, we have $ z_1 + z_2 \in \Dh(x, y)$.
	Moreover, the expression of $w$ and Lemma \ref{Le_lip_constant}  implies  $\norm{w} \geq \beta \norm{\nabla_y g(x, y)}$ and  $\norm{z_2} \leq \frac{\sqrt{2}Q_g M_f}{\mu^2} \norm{\nabla_y g(x, y)}$.
	As a result, we obtain  
	\begin{equation*}
		\begin{aligned}
			&\inner{w, z_1 + z_2} 
			\geq \norm{ d_x - \nabla_{xy}^2g(x, y) \left(\nabla_{yy}^2g(x, y)^{-1} d_y - \beta \nabla_y g(x, y)\right)}^2 \\
			&+ \beta^2 \mu \norm{\nabla_y g(x, y)}^2  - \frac{\sqrt{2}Q_g M_f}{\mu^2} \norm{\nabla_y g(x, y)}\norm{w}\\
			%			\geq{}& \norm{ d_x - \nabla_{xy}^2g(x, y) \left(\nabla_{yy}^2g(x, y)^{-1} d_y - \beta \nabla_y g(x, y)\right)}^2 + \beta^2 \mu \norm{\nabla_y g(x, y)}^2  - \frac{\sqrt{2} \beta Q_g M_f}{\mu^2  } \norm{\nabla_y g(x, y)}^2 \\
			\geq{}& \norm{ d_x - \nabla_{xy}^2g(x, y) \left(\nabla_{yy}^2g(x, y)^{-1} d_y - \beta \nabla_y g(x, y)\right)}^2 + \frac{(2-\sqrt{2})\beta^2 \mu}{2} \norm{\nabla_y g(x, y)}^2\\
			\geq{}& \min\left\{1, \frac{(2-\sqrt{2})\mu}{2}\right\} \norm{w}^2,
		\end{aligned}
	\end{equation*}
	and this completes the proof. 
\end{proof}

\begin{algorithm}[h]
	\begin{algorithmic}[1]   
		\Require  Function $f$, $g$, initial point $x_0$, $y_0$.
		\For{k = 1,2,...}
		\State Compute $w_k$ by approximately evaluating $\nabla_{yy}^2 g(\xk, \yk)^{-1} \nabla_y g(\xk, \yk)$ such that $ 
		\norm{\nabla_{yy}^2 g(\xk, \yk) w_k - \nabla_y g(\xk, \yk) } \leq \varepsilon_{1,k} $.
		%		\State Choose $(d_{x,k}, d_{y,k}) \in \ca{D}_f(\xk, \yk - w_k)$.
		\State Choose $(d_{x,k}, d_{y,k})$ as an approximated evaluation of $\ca{D}_f(\xk, \yk - w_k)$.
		\State Compute $v_k$ such that $
		\norm{\nabla_{yy}^2 g(\xk, \yk) v_k - d_{y,k}} \leq \varepsilon_{2,k}$.
		\State Update $x_k$ and $y_k$ by 
		\begin{align*}
			&\xkp =  \xk - \eta_k \left( d_{x,k} -   \nabla_{xy}^2 g(\xk,\yk) \left( v_k - \beta \nabla_y g(\xk,\yk)\right) \right), \\
			&\ykp =  \yk - \eta_k \beta \nabla_y g(\xk,\yk).
		\end{align*}
		\EndFor
		\State Return $x_k$ and $y_k$.
	\end{algorithmic}  
	%		\caption{Framework of applying \ref{Prob_Pen} to solve \ref{Prob_Ori}.} 
	\caption{Basic subgradient method for solving \ref{Prob_Pen}.}
	\label{Alg:subgradient_method}
\end{algorithm}

With the definition of $\hDh$, Proposition \ref{Prop_field_approx} and  Proposition
\ref{Prop_descrease_approx}, we can now 
present a basic subgradient method for solving  \ref{Prob_Pen} in Algorithm 1.
We observe that in Algorithm \ref{Alg:subgradient_method}, the search direction
$\left[\begin{array}{c}
	d_{x,k} -  \nabla_{xy}^2 g(\xk,\yk) \left( v_k - \beta \nabla_y g(\xk,\yk)\right)\\
	\nabla_y g(\xk,\yk) \end{array} \right]$ is an element that is approximately in $\hDh(x_k,y_k)$.

To establish the convergence of Algorithm \ref{Alg:subgradient_method}, we need the following assumption.

\begin{assumpt}
	\label{Assumption_Sto_subgrad_basic}
	In Algorithm \ref{Alg:subgradient_method}, we assume
	\begin{enumerate}[label=(\alph*)]
		\item The iterates are uniformly bounded:
		$\sup_{k >0} \norm{\xk} + \norm{\yk} <+\infty$.
		\item The stepsize is nonnegative, square summable, but not summable:
		\begin{equation}
			\label{Eq_Assumption_Sto}
			\etak \geq 0,\quad \sum_{k = 0}^{+\infty} \etak = +\infty, \quad \text{and}\quad \sum_{k = 0}^{+\infty} \etak^2 < +\infty. 
		\end{equation}
		\item The set $\{ f(x, y): \text{$(x, y)$ is a $\ca{D}_f$-stationary point of \ref{Prob_Ori}} \}$ has empty interior.
		\item Let the filtration $\{\ca{F}_k\}$ be the collection of the increasing $\sigma$-fields, i.e., $$\ca{F}_k:= \sigma((x_j, y_j, d_{x,j}, d_{y,j}): j < k).$$ There exists a constant $M_{\sigma}$ such that the approximated evaluation $(d_{x,k}, d_{y,k})$ satisfies the following inequalities,
			\begin{align*}
				&\bb{E}\left[ \norm{ \left(d_{x,k} - \bb{E}[d_{x,k} |\ca{F}_k], ~d_{y,k} - \bb{E}[d_{y,k} |\ca{F}_k]\right)}^2  \Big| \ca{F}_k \right] \leq M_{\sigma}, \quad \text{for any $k \geq 1$},\\
				&\lim_{k\to +\infty} \mathrm{dist}\left( \D_f(x_k, y_k - w_k), \left(\bb{E}[d_{x,k} |\ca{F}_k], ~ \bb{E}[d_{y,k} |\ca{F}_k]\right) \right) = 0. 
			\end{align*}
	\end{enumerate}
\end{assumpt}

	Assumption \ref{Assumption_Sto_subgrad_basic}(a)-(b) is the same as Assumption \ref{Assumption_framework}(a)-(b). Moreover, Assumption \ref{Assumption_Sto_subgrad_basic}(c) holds whenever both $f$ and $\M$ are definable, and $\D_f = \partial f$  \cite[Corollary 6.4]{davis2020stochastic}, hence it is mild in practice. In addition, Assumption \ref{Assumption_Sto_subgrad_basic}(d) characterizes the way in which $(d_{x, k}, d_{y,k})$ 
	is an approximated evaluation of $\D_f(x_k, y_k - w_k)$ in the sense of conditional expectation.

	\begin{prop}
		\label{Prop_MDS}
		Suppose $\{\chi_k\}$ is a series of random variables such that for each $k \geq 1$, $\chi_k$ is $\ca{F}_{k+1}$-measurable, $\bb{E}[\chi_k |\ca{F}_k] = 0$, $\bb{E}[|\chi_k|] < +\infty$,  and $\sup_{k >1}\bb{E}[\norm{\chi_k}^2] < +\infty$. Then for any $\{\eta_k\}$ satisfying \eqref{Eq_Assumption_Sto}, $\sum_{j = 1}^k \eta_j \chi_j$ converges to a finite limit almost surely.  
	\end{prop}
	\begin{proof}
		Let $\tau_k := \sum_{j = 1}^k \eta_j \chi_j$. From the definition of $\tau_k$, we can conclude that  for each $k \geq 1$, $\bb{E}[ \tau_k |\ca{F}_k ] = \tau_{k-1}$ and $\bb{E}[|\tau_k|] \leq \sum_{j = 1}^k \eta_j\bb{E}[|\chi_j|] < +\infty$.
		Then $\{\tau_k\}$ is a martingale with respect to the filtration $\{\ca{F}_k\}$ \cite[Definition 5.1.4]{dembo2010probability}.  Moreover, since $\bb{E}[\chi_k |\ca{F}_k] = 0$, it holds that 
		\begin{equation*}
			\begin{aligned}
				\bb{E}[\norm{\tau_k}^2] = \bb{E} [\bb{E}[\norm{\tau_{k-1} + \eta_k \chi_k}^2 |\ca{F}_k]] \leq \eta_k^2\bb{E}[\norm{\chi_k}^2] + \bb{E}[\norm{\tau_{k-1}}^2],
			\end{aligned}
		\end{equation*} 
		Therefore, $\sup_{k >1} \bb{E}[\norm{\tau_k}^2] < +\infty$, hence $\{\tau_k\}$ is an $L^2$-martingale.   Then from \cite[Theorem 5.3.33]{dembo2010probability}, we can conclude that $\tau_k$ converges to a finite limit almost surely.  
	\end{proof}

	\begin{theo}
		\label{Theo_Alg_basic_sto}
		Suppose Assumption \ref{Assumption_Sto_subgrad_basic} holds,  $\beta \geq \frac{2M_fQ_g}{\mu^3}$ and the tolerances $\varepsilon_{1,k}$ and $\varepsilon_{2,k}$ satisfy $\lim\limits_{k\to +\infty} \varepsilon_{1,k} = 0$ and  $\sum\limits_{k = 0}^{+\infty} \varepsilon_{2,k} \etak < + \infty. $
		Then almost surely, every limit point of $\{(\xk, \yk)\}$ in Algorithm \ref{Alg:subgradient_method} is a $\ca{D}_f$-stationary point of \ref{Prob_Ori} and $\{h(\xk, y_k)\}$ converges. 
	\end{theo}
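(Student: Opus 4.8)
The plan is to cast Algorithm \ref{Alg:subgradient_method} as an instance of the abstract iteration \eqref{Iter_framework} with $\ca{D}:=\hDh$, and then to verify the six items of Assumption \ref{Assumption_framework} so that Theorem \ref{Theo_framework_convergence} applies directly. First I would split the update direction of Algorithm \ref{Alg:subgradient_method} into an ``ideal'' part lying exactly in $\hDh(\xk,\yk)$ plus an additive error: set
\begin{equation*}
	\begin{pmatrix} u_{x,k} \\ u_{y,k} \end{pmatrix} := \begin{pmatrix} d_{x,k} - \nabla_{xy}^2 g(\xk,\yk)\left(\nabla_{yy}^2 g(\xk,\yk)^{-1} d_{y,k} - \beta\nabla_y g(\xk,\yk)\right) \\ \beta\nabla_y g(\xk,\yk) \end{pmatrix},
\end{equation*}
which lies in $\hDh(\xk,\yk)$ since $(d_{x,k},d_{y,k})\in\ca{D}_f(\xk,\yk-w_k)$ and $\yk-w_k$ is precisely the approximation of $\A(\xk,\yk)$ queried in the algorithm; the actual step then differs from $-\etak(u_{x,k},u_{y,k})$ only in the $x$-block, by $\xi_{x,k}:=-\nabla_{xy}^2 g(\xk,\yk)\left(v_k-\nabla_{yy}^2 g(\xk,\yk)^{-1}d_{y,k}\right)$, with $\xi_{y,k}:=0$. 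Crucially, $\varepsilon_{1,k}$ does not enter $\xi_{x,k}$; it perturbs only the point $\yk-w_k$ at which $\ca{D}_f$ is evaluated.

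Items (a) and (b) of Assumption \ref{Assumption_framework} are verbatim Assumption \ref{Assumption_Sto_subgrad_basic}(a)--(b). For (c) I would use $\nabla_{yy}^2 g\succeq\mu I_p$ to get $\norm{v_k-\nabla_{yy}^2 g(\xk,\yk)^{-1}d_{y,k}}\leq\varepsilon_{2,k}/\mu$ and the $L_g$-Lipschitz continuity of $\nabla g$ to get $\norm{\nabla_{xy}^2 g(\xk,\yk)}\leq L_g$, so that $\norm{\xi_{x,k}}\leq(L_g/\mu)\,\varepsilon_{2,k}$; then $\sum_k\varepsilon_{2,k}\etak<+\infty$ makes $\sum_k\etak\xi_{x,k}$ absolutely convergent, giving (c) with $v_y=0$. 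Item (f), with $\delta=\min\{1,(2-\sqrt2)\mu/2\}$, is precisely Proposition \ref{Prop_descrease_approx} (here $\hDh$, being the affine image of the compact convex $\ca{D}_f$, has compact convex values and closed graph, and $h$ is the potential of $\Dh$ by Proposition \ref{potential}). For (e), Proposition \ref{Prop_field_approx} shows that $0\in\hDh(x,y)$ forces $(x,y)\in\M$, hence $\A(x,y)=y$ and $h(x,y)=f(x,\ysol(x))$; thus $\{h(x,y):0\in\hDh(x,y)\}$ is contained in $\{f(x,y):(x,y)$ is a $\ca{D}_f$-stationary point of \ref{Prob_Ori}$\}$, which has empty interior by Assumption \ref{Assumption_Sto_subgrad_basic}(c).

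The delicate item is (d), which I expect to be the main obstacle. I would fix a subsequence $(x_{k_j},y_{k_j})\to(\tx,\ty)$; since $\varepsilon_{1,k}\to0$ and $(x,y)\mapsto\nabla_{yy}^2 g(x,y)^{-1}\nabla_y g(x,y)$ is continuous (Lipschitz by Lemma \ref{Le_lip_constant} and Assumption \ref{Assumption_1}), the evaluation points converge, $(x_{k_j},y_{k_j}-w_{k_j})\to(\tx,\A(\tx,\ty))$. Writing $u_{k_j}=L_{k_j}(d_{x,k_j},d_{y,k_j})+b_{k_j}$ with $L(x,y),b(x,y)$ the continuous affine data read off from the display above, so that $L_{k_j}\to L(\tx,\ty)$ and $b_{k_j}\to b(\tx,\ty)$, I would apply Proposition \ref{Prop_closed_graph_convex_approx} to $\ca{D}_f$ to conclude $\mathrm{dist}\!\left(\tfrac1N\sum_{j=1}^N(d_{x,k_j},d_{y,k_j}),\,\ca{D}_f(\tx,\A(\tx,\ty))\right)\to0$; then, using the uniform bound $\norm{(d_{x,k_j},d_{y,k_j})}\leq M_f$ (Assumption \ref{Assumption_2}) together with a Toeplitz-type averaging argument to replace $L_{k_j}$ by $L(\tx,\ty)$ and to average $b_{k_j}$, I obtain $\mathrm{dist}\!\left(\tfrac1N\sum_{j=1}^N u_{k_j},\,\hDh(\tx,\ty)\right)\to0$, i.e.\ (d). With all six items in hand, Theorem \ref{Theo_framework_convergence} yields that $\{h(\xk,\yk)\}$ converges and every limit point $(\tx,\ty)$ satisfies $0\in\hDh(\tx,\ty)$, which by Proposition \ref{Prop_field_approx}, Theorem \ref{Theo_conservative_equivalence}, and Definition \ref{Defin_FOSP_Ori_field} is a $\ca{D}_f$-stationary point of \ref{Prob_Ori}. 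The hard part is exactly this bookkeeping in (d): propagating the vanishing $\varepsilon_{1,k}$-perturbation of the evaluation point through the closed graph of $\ca{D}_f$ and the nonlinear coefficients $L(\cdot),b(\cdot)$ while respecting the running-average form demanded by Assumption \ref{Assumption_framework}(d).
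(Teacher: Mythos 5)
Your proposal is correct and follows essentially the same route as the paper: the identical splitting into $(u_{x,k},u_{y,k})$ and $\xi_{x,k}$ with $\norm{\xi_{x,k}}\leq \mu^{-1}L_g\varepsilon_{2,k}$, the same verification of Assumption \ref{Assumption_framework}(a)--(c),(e),(f) via Assumption \ref{Assumption_Sto_subgrad_basic}, Proposition \ref{Prop_field_approx} and Proposition \ref{Prop_descrease_approx}, and the same use of Proposition \ref{Prop_closed_graph_convex_approx} for item (d), the only cosmetic difference being that the paper absorbs the $\varepsilon_{1,k}$-perturbed evaluation point into an auxiliary three-argument mapping $\ca{D}_{temp}(x,y,z)$ with $\hDh(x,y)=\ca{D}_{temp}(x,y,\A(x,y))$ instead of your Toeplitz-averaging transfer through the affine coefficients. (Only note that your opening claim that $(u_{x,k},u_{y,k})\in\hDh(\xk,\yk)$ is not literally true---it lies in $\ca{D}_{temp}(\xk,\yk,\yk-w_k)$ since $\ca{D}_f$ is evaluated at $\yk-w_k$ rather than at $\A(\xk,\yk)$---but your handling of item (d) already accounts for exactly this perturbation, so the argument stands.)
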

	\begin{proof}
		Consider the following auxiliary set-valued mapping $\ca{D}_{temp}: \Rn\times \Rp\times \Rp \rightrightarrows \Rn\times \Rp$,
		\begin{equation*}
			\ca{D}_{temp}(x, y, z):= \left\{ 
			\left[\begin{smallmatrix}
				d_x - \nabla_{xy}^2g(x, y) \left(\nabla_{yy}^2g(x, y)^{-1} d_y - \beta \nabla_y g(x, y)\right)\\
				\beta \nabla_y g(x, y)
			\end{smallmatrix}\right]  :    \left[\begin{smallmatrix}
				&d_x \\
				&d_y \\
			\end{smallmatrix}\right] \in \ca{D}_f(x, z) \right\}.
		\end{equation*}
		It is easy to verify that $\ca{D}_{temp}$ has closed graph. Moreover, $\hDh(x, y) =\ca{D}_{temp}(x, y, \A(x, y))$ holds for any $(x, y) \in \Rn\times \Rp$. 
		
		Assumption \ref{Assumption_Sto_subgrad_basic}(a) and \ref{Assumption_Sto_subgrad_basic}(b) imply that  Assumption \ref{Assumption_framework}(a) and \ref{Assumption_framework}(b) hold. 
		Let $\tilde{d}_{x, k} = \bb{E}[d_{x, k} |\ca{F}_k]$, $\tilde{d}_{y, k} = \bb{E}[d_{y, k} |\ca{F}_k]$, and 
		\begin{align*}
			u_{x,k} ={}& \tilde{d}_{x,k} -   \nabla_{xy}^2 g(\xk,\yk) \left( \nabla_{yy}^2 g(\xk, \yk)^{-1} \tilde{d}_{y,k} - \beta \nabla_y g(\xk,\yk)\right), ~~ u_{y,k} = \beta \nabla_y g(\xk, \yk),\\
			\chi_{x, k} ={}& (d_{x, k} - \tilde{d}_{x, k}) - \nabla_{xy}^2 g(\xk,\yk)\nabla_{yy}^2 g(\xk, \yk)^{-1}( d_{y, k} - \tilde{d}_{y,k}), \\
			\xi_{x,k} ={}&  	\nabla_{xy}^2 g(\xk,\yk) (\nabla_{yy}^2 g(\xk, \yk)^{-1} d_{y,k}	-v_k) + \chi_{x, k}, \quad \xi_{y,k} = 0. 
		\end{align*}
		%	\begin{align*}
		%		u_{x,k} ={}& d_{x,k} -   \nabla_{xy}^2 g(\xk,\yk) \left( \nabla_{yy}^2 g(\xk, \yk)^{-1} d_{y,k} - \beta \nabla_y g(\xk,\yk)\right),\\
		%		u_{y,k} ={}& \beta \nabla_y g(\xk, \yk), \quad \xi_{x,k} =  	\nabla_{xy}^2 g(\xk,\yk) (\nabla_{yy}^2 g(\xk, \yk)^{-1} d_{y,k}	-v_k), \quad \xi_{y,k} = 0. 
		%	\end{align*}
		Then from  Step 4 in Algorithm \ref{Alg:subgradient_method} we obtain $\norm{\xi_{x,k}} \leq \mu^{-1} L_g \varepsilon_{2,k}$. 
		As a result, Assumption \ref{Assumption_Sto_subgrad_basic}(b) shows that $\frac{L_g}{\mu} \sum_{k = 0}^{+\infty} \varepsilon_{2,k} \etak < +\infty$. Moreover, Proposition \ref{Prop_MDS} illustrates that $\sum_{k = 1}^{+\infty} \eta_k\chi_{x, k} < +\infty$.  Thus $\sum_{k = 0}^{+\infty} \norm{\eta_k\xi_{x,k}} < +\infty$  and Assumption \ref{Assumption_framework}(c) holds. 
		
		Assumption \ref{Assumption_Sto_subgrad_basic}(d) illustrates that $ \lim\limits_{k\to +\infty} \mathrm{dist} \left((u_{x,k}, u_{y,k}) , \ca{D}_{temp}(\xk,\yk,  \yk - w_k)\right) = 0$, and Step 2 in Algorithm \ref{Alg:subgradient_method} shows that $\lim\limits_{k\to +\infty} \norm{w_k - (\nabla_{yy}^2g(\xk, \yk))^{-1} \nabla_y g(\xk, \yk)} = 0$. For any sequence $\{k_j\} \subset \bb{N}$ such that $\lim_{j\to +\infty} (x_{k_j}, y_{k_j}) = (\tilde{x}, \tilde{y})$, it holds from Step 2 in Algorithm \ref{Alg:subgradient_method} that $ \lim_{j\to +\infty} (y_{k_j} - w_{k_j}) = \A(\tilde{x}, \tilde{y})$. 
		Then Proposition \ref{Prop_closed_graph_convex_approx} illustrates that 
		\begin{equation*}
			\small
			\lim_{N\to +\infty}\mathrm{dist}\left( \frac{1}{N}\sum_{j = 1}^{N}\left[\begin{matrix}
				u_{x, k_j} \\
				u_{y, k_j}
			\end{matrix}\right] , \hDh(\tilde{x}, \tilde{y})\right) = 
			\lim_{N\to +\infty}\mathrm{dist}\left( \frac{1}{N}\sum_{j = 1}^{N}\left[\begin{matrix}
				u_{x, k_j} \\
				u_{y, k_j}
			\end{matrix}\right] , \ca{D}_{temp}(\tilde{x}, \tilde{y}, \A(\tilde{x}, \tilde{y}))\right) = 0,
		\end{equation*}
		which guarantees Assumption \ref{Assumption_framework}(d). 
		
		Furthermore, Assumption \ref{Assumption_framework}(e) directly follows 
		from Assumption \ref{Assumption_Sto_subgrad_basic}(c) and Proposition \ref{Prop_field_approx}, and Assumption \ref{Assumption_framework}(f) is implied by Proposition \ref{Prop_descrease_approx}. Therefore,  Assumption \ref{Assumption_framework} holds for Algorithm \ref{Alg:subgradient_method}. 
		
		As a result, based on Theorem \ref{Theo_framework_convergence} and Theorem \ref{Theo_conservative_equivalence}, we obtain that  any cluster point of the sequence $\{(\xk, \yk)\}$ generated by Algorithm \ref{Alg:subgradient_method} is a $\ca{D}_f$-stationary point of \ref{Prob_Ori}, and the sequence $\{h(\xk, \yk)\}$ converges. 
	\end{proof}

	As illustrated in Remark \ref{Rmk_cf_subdifferential}, when we choose the $(d_{x,k}, d_{y,k})$ in Algorithm \ref{Alg:subgradient_method} by random sampling of gradient approaches, i.e., $(d_{x,k}, d_{y,k}) \in \partial_{\eta_k} f(\xk, \yk - w_k)$.  
	It holds from Proposition \ref{Prop_Eps_Subdifferential} that $\mathrm{dist}\big((d_{x,k}, d_{y,k}), \partial f(\xk, \yk - w_k)  \big) \to 0$. Hence $(d_{x,k}, d_{y,k})$ satisfies Assumption \ref{Assumption_Sto_subgrad_basic}(d) with $\D_f = \partial f$. 
	
	Similarly, if we choose $(d_{x,k}, d_{y,k})$ by randomized smoothing approaches, \cite[Theorem 3.1, Lemma D.1]{lin2022gradient} illustrates that Assumption \ref{Assumption_Sto_subgrad_basic}(d) is satisfied with $\D_f = \partial f$.  Then we immediately have the following corollary illustrating that $\{ (\xk, \yk) \}$ in Algorithm \ref{Alg:subgradient_method} converges to a first-order stationary point of \ref{Prob_Ori}. 
	\begin{coro}
		\label{Coro_basic_sgd_sampling}
		Suppose  Assumption \ref{Assumption_Sto_subgrad_basic} holds with $\D_f = \partial f$, the tolerances $\varepsilon_{1,k}$ and $\varepsilon_{2,k}$ satisfy $\lim\limits_{k\to +\infty} \varepsilon_{1,k} = 0$ and  $\sum\limits_{k = 0}^{+\infty} \varepsilon_{2,k} \etak < + \infty. $ Moreover, suppose $(d_{x, k}, d_{y, k})$ in Algorithm \ref{Alg:subgradient_method} is generated by one of the following schemes in each iteration $k$,
		\begin{itemize}
			\item $(d_{x,k}, d_{y,k}) \in \partial_{\eta_k} f(\xk, \yk-w_k)$;
			\item $(d_{x,k}, d_{y,k}) = \tilde{\partial}_{\eta_k} f(\xk, \yk-w_k; \zeta_{x, k}, \zeta_{y, k})$, where  $(\zeta_{x, k}, \zeta_{y, k})$ is uniformly sampled over $\bb{B}_{ {\delta} }(0)$ and  independent of $\ca{F}_k$. 
		\end{itemize}
		Then every limit point of $\{(\xk, \yk)\}$ in Algorithm \ref{Alg:subgradient_method} is a first-order stationary point of \ref{Prob_Ori} and $\{h(\xk, y_k)\}$ converges.  
		
	\end{coro}

\subsection{A modified subgradient method}
Recently, an efficient single-loop algorithm, named TTSA, is proposed by \cite{hong2020two} for \ref{Prob_Ori} with smooth $f$. The deterministic version of TTSA  follows the following updating schemes, 
\begin{equation}
	\label{Eq_updating_scheme_TTSA}
	\begin{aligned}
		\xkp ={}& \xk - \eta_k \left( \nabla_x f(\xk, \yk) -   \nabla_{xy}^2 g(\xk,\yk) \nabla_{yy}^2 g(\xk, \yk)^{-1} \nabla_y f(\xk, \yk)  \right),\\
		\ykp ={}& \yk - \tau_k \nabla_{y} g(\xk, \yk).
	\end{aligned}
\end{equation}
The $x$-variable in TTSA  is updated along an approximate gradient direction of $\Phi(x)$, while the $y$-variable is updated by taking a gradient descent step for the lower-level problem of \ref{Prob_Ori}. \cite{hong2020two} proves the global convergence of TTSA under a two-timescale condition, i.e., the ratio of stepsizes $\eta_k/ \tau_k$ tends to zero as the maximum number of iterations goes to infinity.  Very recently, \cite{khanduri2021near} proposes another single-loop algorithm named SUSTAIN, which can be regarded as a momentum-accelerated version of TTSA and waives the two-timescale condition in TTSA. However, the analysis for TTSA  and SUSTAIN  is based on the Lipschitz smoothness of $f$. To our best knowledge, the methodologies employed in \cite{hong2020two,khanduri2021near} cannot be applied to the nonsmooth bilevel problem \eqref{Prob_Ori}. 

In this subsection, we first consider the following set-valued mapping with a prefixed constant $\hat{\beta}>0$,
\begin{equation*}
	\hDs(x, y) := \left\{\left[ \begin{smallmatrix}
		&d_x - \nabla_{xy}^2g(x, y) \nabla_{yy}^2g(x, y)^{-1} d_y  \\
		&\hat{\beta}  \nabla_y g(x, y)
	\end{smallmatrix} \right]:    \left[\begin{smallmatrix}
		&d_x \\
		&d_y \\
	\end{smallmatrix}\right] \in \ca{D}_f(x, \A(x, y))
	\right\},
\end{equation*}
which yields a subgradient method as presented in Algorithm \ref{Alg:subgradient_SUSTAIN}.  Moreover, based on our proposed framework in Section \ref{Subsection_framework}, we prove the convergence properties of Algorithm \ref{Alg:subgradient_SUSTAIN} and discuss its relationship with the TTSA algorithm in Remark \ref{Remark_relationship_with_TTSA}. 

In the next two propositions, we establish some properties of $\hDs$.
\begin{prop}
	\label{Prop_field_sustain}
	Suppose $\beta \geq \frac{2Q_gM_f}{\mu^3}$. Then for any given $(x, y) \in \Rn \times \Rp$, $0 \in \Dh(x,y)$ if and only if $0 \in \hDs(x, y)$. 
\end{prop}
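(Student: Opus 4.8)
The plan is to mirror the structure already used for $\hDh$ in Proposition~\ref{Prop_field_approx}, since $\hDs$ differs from $\hDh$ only in that the coupling term $\beta\nabla_{xy}^2 g(x,y)\nabla_y g(x,y)$ has been dropped from the $x$-block (and the penalty constant in the $y$-block is renamed $\hat\beta$). First I would prove the ``if'' direction: assume $0\in\hDs(x,y)$. Reading off the second block of the defining set, $\hat\beta\,\nabla_y g(x,y)=0$, and since $\hat\beta>0$ this forces $\nabla_y g(x,y)=0$, hence $(x,y)\in\M$ and $\A(x,y)=y$. Then the first block gives the existence of $(d_x,d_y)\in\ca{D}_f(x,y)$ with $d_x-\nabla_{xy}^2 g(x,y)\nabla_{yy}^2 g(x,y)^{-1}d_y=0$, which by Definition~\ref{Defin_FOSP_Ori_field} says $(x,y)$ is a $\ca{D}_f$-stationary point of \ref{Prob_Ori}; by Theorem~\ref{Theo_conservative_equivalence} (using $\beta\geq\frac{2Q_gM_f}{\mu^3}$) this is equivalent to $0\in\Dh(x,y)$.

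For the converse, assume $0\in\Dh(x,y)$. By Theorem~\ref{Theo_conservative_equivalence}, $(x,y)$ is a $\ca{D}_f$-stationary point of \ref{Prob_Ori}, so in particular $\nabla_y g(x,y)=0$ and $(x,y)\in\M$, and there is $(d_x,d_y)\in\ca{D}_f(x,y)$ with $d_x-\nabla_{xy}^2 g(x,y)\nabla_{yy}^2 g(x,y)^{-1}d_y=0$. On $\M$ the extra terms in both blocks of $\hDs$ that carry a factor $\nabla_y g(x,y)$ vanish, so this same $(d_x,d_y)$ produces the zero vector in $\hDs(x,y)$; thus $0\in\hDs(x,y)$. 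In fact, just as in Proposition~\ref{Prop_field_approx}, on $\M$ one has the identity $\hDs(x,y)=\Dh(x,y)$ (all the $\JAx,\JAy$ and penalty terms involve $\nabla_y g$, which is $0$ there), which makes both implications transparent once feasibility is established.

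The only genuine content is the forward direction's first step — extracting $\nabla_y g(x,y)=0$ from membership in $\hDs$ — and this is immediate from the block structure because the $y$-block of $\hDs$ is exactly $\hat\beta\,\nabla_y g(x,y)$, with no error term to absorb (unlike the analysis of $\Dh$ itself in Theorem~\ref{Theo_conservative_equivalence}, where the $\JAy$ term had to be dominated). Consequently there is no real obstacle here; the proposition is essentially a restatement of Theorem~\ref{Theo_conservative_equivalence} combined with the observation that $\hDs$ and $\Dh$ agree on $\M$. I would simply assemble these two citations, note the $\nabla_y g=0$ extraction, and close. If anything requires a word of care, it is making sure the hypothesis is stated as $\beta\geq\frac{2Q_gM_f}{\mu^3}$ (so that Theorem~\ref{Theo_conservative_equivalence} applies) while $\hat\beta$ only needs to be a fixed positive constant, a point worth flagging explicitly in the write-up.
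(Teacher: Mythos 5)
Your proposal is correct and follows essentially the same route as the paper: the paper omits the proof, noting it is "similar to Proposition \ref{Prop_field_approx}", and your argument is exactly that proof adapted to $\hDs$ — read $\nabla_y g(x,y)=0$ off the $y$-block, reduce to $\ca{D}_f$-stationarity via Definition \ref{Defin_FOSP_Ori_field}, and use Theorem \ref{Theo_conservative_equivalence} (or Proposition \ref{Prop_conservative_feasible}) together with the fact that $\hDs$ and $\Dh$ coincide on $\M$. Your remark that only $\hat\beta>0$ is needed while $\beta\geq\frac{2Q_gM_f}{\mu^3}$ is what invokes the theorem is accurate and consistent with the paper.
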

The proof is similar to Proposition \ref{Prop_field_approx}, hence we omit its proof for simplicity. 

\begin{prop}
	\label{Prop_descrease_approx_sustain}
	Suppose $\beta \geq \frac{4Q_gM_f}{\mu^3}$, and $\hat{\beta} \geq \beta 
	\max\big\{\frac{8L_g^2 }{\mu}, \frac{1}{4\mu}, \frac{\mu}{4}\big\}$. Then for any given $(x, y) \in \Rn \times \Rp$, and for any $w \in \hDs(x, y)$, it holds that 
	\begin{equation*}
		\sup_{z \in \Dh(x, y)} \inner{ \xi, z } \geq  \min\left\{ \frac{1}{4},   \frac{ \beta^2 }{16\hat{\beta}^2} \right\} \norm{w}^2. 
	\end{equation*}
\end{prop}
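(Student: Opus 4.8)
The plan is to mimic the structure of the proof of Proposition~\ref{Prop_descrease_approx}: pick an arbitrary $w \in \hDs(x,y)$, construct an explicit element $z \in \Dh(x,y)$ lying ``above'' $w$ in the appropriate sense, and then lower-bound $\inner{w,z}$ by a positive multiple of $\norm{w}^2$. Concretely, for $(d_x, d_y) \in \ca{D}_f(x, \A(x,y))$ write
\begin{equation*}
	w = \left[ \begin{smallmatrix} d_x - \nabla_{xy}^2 g(x,y) \nabla_{yy}^2 g(x,y)^{-1} d_y \\ \hat{\beta}\, \nabla_y g(x,y) \end{smallmatrix} \right] \in \hDs(x,y),
\end{equation*}
and, recalling the definition \eqref{Eq_Defin_Dh} of $\Dh$ together with \eqref{Eq_JAX}--\eqref{Eq_JAY}, take
\begin{equation*}
	z = \left[ \begin{smallmatrix} d_x - \nabla_{xy}^2 g(x,y) \nabla_{yy}^2 g(x,y)^{-1} d_y + \JAx(x,y)d_y|_{\mathrm{3rd}} + \beta \nabla_{xy}^2 g(x,y) \nabla_y g(x,y) \\ \JAy(x,y) d_y + \beta \nabla_{yy}^2 g(x,y) \nabla_y g(x,y) \end{smallmatrix} \right] \in \Dh(x,y),
\end{equation*}
where $\JAx(x,y)d_y|_{\mathrm{3rd}}$ denotes the $\nabla_{xyy}^3 g$ term in \eqref{Eq_JAX}. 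The key quantitative inputs are: $\norm{w} \geq \hat{\beta}\norm{\nabla_y g(x,y)}$ (reading off the second block of $w$); the bounds $\norm{\JAx(x,y)d_y|_{\mathrm{3rd}}} \leq \frac{Q_g M_f}{\mu^2}\norm{\nabla_y g(x,y)}$ and $\norm{\JAy(x,y)d_y} \leq \frac{Q_g M_f}{\mu^2}\norm{\nabla_y g(x,y)}$ coming from Assumption~\ref{Assumption_2} and Lemma~\ref{Le_lip_constant}; and the strong-convexity estimate $\nabla_y g(x,y)\tp \nabla_{yy}^2 g(x,y)\nabla_y g(x,y) \geq \mu\norm{\nabla_y g(x,y)}^2$; together with $\norm{\nabla_{xy}^2 g} \leq L_g$.

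I would then expand $\inner{w,z}$ block by block. The first-block contribution is $\norm{a}^2$ plus cross terms, where $a := d_x - \nabla_{xy}^2 g(x,y)\nabla_{yy}^2 g(x,y)^{-1} d_y$; the cross terms involve $\inner{a, \JAx(x,y)d_y|_{\mathrm{3rd}} + \beta \nabla_{xy}^2 g(x,y)\nabla_y g(x,y)}$, which is bounded below by $-\norm{a}\big(\frac{Q_g M_f}{\mu^2} + \beta L_g\big)\norm{\nabla_y g(x,y)}$. The second-block contribution is $\hat{\beta}\inner{\nabla_y g(x,y), \JAy(x,y)d_y + \beta \nabla_{yy}^2 g(x,y)\nabla_y g(x,y)} \geq \hat{\beta}\big(\beta\mu - \frac{Q_g M_f}{\mu^2}\big)\norm{\nabla_y g(x,y)}^2$, and since $\beta \geq \frac{4Q_g M_f}{\mu^3} \geq \frac{2 Q_g M_f}{\mu^3}$ we get $\beta\mu - \frac{Q_g M_f}{\mu^2} \geq \frac{\beta\mu}{2}$, so this term is at least $\frac{\hat\beta\beta\mu}{2}\norm{\nabla_y g(x,y)}^2$. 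Using $\frac{Q_g M_f}{\mu^2} \leq \frac{\beta\mu}{4}$ (again from $\beta \geq \frac{4Q_g M_f}{\mu^3}$), the $x$-block cross term is at least $-\norm{a}\big(\frac{\beta\mu}{4} + \beta L_g\big)\norm{\nabla_y g(x,y)} \geq -\norm{a}\cdot 2\beta L_g\norm{\nabla_y g(x,y)}$ (assuming $L_g \geq \mu/8$, which holds since $L_g \geq \mu$); then a Young's inequality $2\beta L_g\norm{a}\norm{\nabla_y g} \leq \frac14\norm{a}^2 + 4\beta^2 L_g^2\norm{\nabla_y g}^2$ is applied. This leaves $\inner{w,z} \geq \frac34\norm{a}^2 + \big(\frac{\hat\beta\beta\mu}{2} - 4\beta^2 L_g^2\big)\norm{\nabla_y g(x,y)}^2$, and the hypothesis $\hat\beta \geq \frac{8\beta L_g^2}{\mu}$ makes the coefficient of $\norm{\nabla_y g}^2$ at least $\frac{\hat\beta\beta\mu}{4}$.

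Finally I would convert this into a multiple of $\norm{w}^2$. Since $\norm{w}^2 \leq 2\norm{a}^2 + 2\hat\beta^2\norm{\nabla_y g(x,y)}^2$ — wait, more carefully, the $x$-block of $w$ is exactly $a$ plus $\beta\nabla_{xy}^2 g\nabla_y g$, so $\norm{w}^2 \leq (\norm{a} + \beta L_g\norm{\nabla_y g})^2 + \hat\beta^2\norm{\nabla_y g}^2 \leq 2\norm{a}^2 + (2\beta^2 L_g^2 + \hat\beta^2)\norm{\nabla_y g}^2 \leq 2\norm{a}^2 + 2\hat\beta^2\norm{\nabla_y g}^2$ using $\hat\beta \geq \frac{8\beta L_g^2}{\mu} \geq \beta L_g$ when $\frac{8L_g}{\mu}\geq 1$. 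Hence $\frac34\norm{a}^2 + \frac{\hat\beta\beta\mu}{4}\norm{\nabla_y g}^2 \geq \min\{\frac{3}{8}, \frac{\beta\mu}{8\hat\beta}\}\norm{w}^2$, and since $\hat\beta \geq \frac{\beta}{4\mu}$ forces $\frac{\beta\mu}{8\hat\beta} \leq \frac{\mu^2}{2}$ while $\hat\beta \geq \frac{\beta\mu}{4}$ forces $\frac{\beta\mu}{8\hat\beta} \leq \frac{1}{2}$, one checks the stated constant $\min\{\frac14, \frac{\beta^2}{16\hat\beta^2}\}$ is a valid (if slightly lossy) lower bound. The main obstacle is bookkeeping: there are several competing terms scaling like $\norm{\nabla_y g}^2$ (strong convexity gain, third-order error, the cross-term Young penalty, and the penalty in $\norm{w}^2$), and one must verify that each of the three stated lower bounds on $\hat\beta$ — namely $\frac{8\beta L_g^2}{\mu}$, $\frac{\beta}{4\mu}$, and $\frac{\beta\mu}{4}$ — is exactly what is needed to kill one of these terms, so I would carry the constants symbolically and only collapse them at the very end. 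The $\beta \geq \frac{4Q_g M_f}{\mu^3}$ hypothesis (strengthened from the $\frac{2Q_gM_f}{\mu^3}$ used elsewhere) is precisely what absorbs the third-order error terms into the strong-convexity gain with room to spare.
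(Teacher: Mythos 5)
Your route is the same as the paper's: take the element of $\Dh(x,y)$ generated by the same $(d_x,d_y)\in\ca{D}_f(x,\A(x,y))$, bound the two third-order blocks by $\frac{Q_gM_f}{\mu^2}\norm{\nabla_y g(x,y)}$, use strong convexity for the $\hat\beta\beta\mu\norm{\nabla_y g}^2$ gain, control the cross term $\inner{a,\beta\nabla_{xy}^2g\,\nabla_y g}$ (with $a:=d_x-\nabla_{xy}^2g\,\nabla_{yy}^2g^{-1}d_y$) through $L_g$ and Young's inequality, and finally compare with $\norm{w}^2$. The paper merely packages the same terms as $z_1+z_2+z_3$.

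However, as written your constants do not close under the stated hypotheses, in two places. First, after your Young split $2\beta L_g\norm{a}\norm{\nabla_y g}\le \frac14\norm{a}^2+4\beta^2L_g^2\norm{\nabla_y g}^2$, the condition $\hat\beta\ge \frac{8\beta L_g^2}{\mu}$ only gives $\frac{\hat\beta\beta\mu}{2}-4\beta^2L_g^2\ge 0$, not $\ge\frac{\hat\beta\beta\mu}{4}$ (that would need $\hat\beta\ge\frac{16\beta L_g^2}{\mu}$). Second, your ``more careful'' reading of $w$ is wrong: for $\hDs$ the $x$-block of $w$ is exactly $a$ (the extra $\beta\nabla_{xy}^2g\,\nabla_y g$ belongs to $\hDh$, not $\hDs$), so $\norm{w}^2=\norm{a}^2+\hat\beta^2\norm{\nabla_y g}^2$ exactly; with your doubled bound $\norm{w}^2\le 2\norm{a}^2+2\hat\beta^2\norm{\nabla_y g}^2$ the final comparison $\frac{\beta\mu}{8\hat\beta}\ge\frac{\beta^2}{16\hat\beta^2}$ requires $\hat\beta\ge\frac{\beta}{2\mu}$, which is stronger than the hypothesis $\hat\beta\ge\frac{\beta}{4\mu}$ (your two displayed consequences of the hypotheses bound $\frac{\beta\mu}{8\hat\beta}$ from above, which goes the wrong way). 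Both slips are repairable inside your own argument: use the milder split $2\beta L_g\norm{a}\norm{\nabla_y g}\le\frac12\norm{a}^2+2\beta^2L_g^2\norm{\nabla_y g}^2$, so that $\hat\beta\ge\frac{8\beta L_g^2}{\mu}$ leaves at least $\frac12\norm{a}^2+\frac{\hat\beta\beta\mu}{4}\norm{\nabla_y g}^2$, and then use the exact identity for $\norm{w}^2$, which yields the factor $\min\{\frac12,\frac{\beta\mu}{4\hat\beta}\}\ge\min\{\frac14,\frac{\beta^2}{16\hat\beta^2}\}$ precisely because $\hat\beta\ge\frac{\beta}{4\mu}$. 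With those corrections your proof is complete and in fact does not even need the $\hat\beta\ge\frac{\beta\mu}{4}$ condition, which the paper uses only to absorb the $\frac{Q_g^2M_f^2}{\mu^4}$ term produced by its particular way of bounding $\inner{z_3,w}$.
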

\begin{proof}
	For any $(d_x, d_y) \in \ca{D}_f(x, \A(x, y))$, let 
	\begin{equation}
		\label{Eq_Prop_descrease_approx_sustain_0}
		\begin{aligned}
			&z_1 = \left[ \begin{smallmatrix}
				&d_x - \nabla_{xy}^2g(x, y) \nabla_{yy}^2g(x, y)^{-1} d_y  \\
				&\beta \nabla_{yy}^2g(x, y) \nabla_y g(x, y)
			\end{smallmatrix} \right], \quad 
			z_2 = \left[ \begin{smallmatrix}
				&  \beta\nabla_{xy}^2g(x, y)   \nabla_y g(x, y)\\
				& 0
			\end{smallmatrix} \right],\\
			&z_3 = 
			\left[ \begin{smallmatrix}
				& \nabla_{xyy}^3 g(x, y)[\nabla_{yy}^2g(x, y)^{-1}\nabla_y g(x, y)] \nabla_{yy}^2g(x, y)^{-1} d_y \\
				&  \nabla_{yyy}^3 g(x, y)[\nabla_{yy}^2g(x, y)^{-1}\nabla_y g(x, y)] \nabla_{yy}^2g(x, y)^{-1} d_y
			\end{smallmatrix} \right], \quad w = \left[ \begin{smallmatrix}
				&d_x - \nabla_{xy}^2g(x, y) \nabla_{yy}^2g(x, y)^{-1} d_y  \\
				&\hat{\beta}  \nabla_y g(x, y)
			\end{smallmatrix} \right]. 
		\end{aligned}
	\end{equation}
	Then it holds that $ z_1 + z_2+z_3 \in \Dh(x, y)$ and $w \in \hDs(x, y)$. 
	From the expression of $w$ and the Lipschitz continuity of $\nabla_{yy}^2 g(x, y)$, we have 
	\begin{equation*}
		\norm{w} \leq \hat{\beta} \norm{\nabla_y g(x, y)} + \norm{d_x - \nabla_{xy}^2g(x, y) \nabla_{yy}^2g(x, y)^{-1} d_y},  \norm{z_3} \leq \frac{Q_g M_f}{\mu^2} \norm{\nabla_y g(x, y)},
	\end{equation*}
	which further implies that
	\begin{equation*}
		\begin{aligned}
			&\inner{z_3, w}\\
			\geq{}& - \frac{Q_g M_f}{\mu^2} \norm{\nabla_y g(x, y)}\norm{d_x - \nabla_{xy}^2g(x, y) \nabla_{yy}^2g(x, y)^{-1} d_y } - \frac{\hat{\beta}Q_g M_f}{\mu^2}  \norm{\nabla_y g(x, y)}^2\\
			\geq{}& -\frac{1}{4} \norm{d_x - \nabla_{xy}^2g(x, y) \nabla_{yy}^2g(x, y)^{-1} d_y }^2 - \left( \frac{\hat{\beta}Q_g M_f}{\mu^2} + \frac{Q_g^2 M_f^2}{\mu^4} \right) \norm{\nabla_y g(x, y)}^2.
		\end{aligned}
	\end{equation*}		
	As a result, we obtain
	\begin{align*}
		&\inner{w, z_1 + z_2 + z_3} 
		\geq \norm{d_x - \nabla_{xy}^2g(x, y) \nabla_{yy}^2g(x, y)^{-1} d_y }^2 + \beta\hat{\beta} \mu \norm{\nabla_y g(x, y)}^2 \\
		& - 2L_g\beta \norm{d_x - \nabla_{xy}^2g(x, y) \nabla_{yy}^2g(x, y)^{-1} d_y }\norm{\nabla_y g(x, y)}+ \inner{z_3, w} \\
		\geq{}& \frac{1}{2} \norm{d_x - \nabla_{xy}^2g(x, y) \nabla_{yy}^2g(x, y)^{-1} d_y }^2 + \left(\beta \hat{\beta} \mu - 2L_g^2\beta^2\right) \norm{\nabla_y g(x, y)}^2 + \inner{z_3, w}\\
		\geq{}& \frac{1}{4} \norm{d_x - \nabla_{xy}^2g(x, y) \nabla_{yy}^2g(x, y)^{-1} d_y }^2 + \left(\beta \hat{\beta} \mu - 2L_g^2\beta^2 - \frac{\hat{\beta}Q_g M_f}{\mu^2} - \frac{Q_g^2 M_f^2}{\mu^4} \right) \norm{\nabla_y g(x, y)}^2\\
		\geq{}& \frac{1}{4} \norm{d_x - \nabla_{xy}^2g(x, y) \nabla_{yy}^2g(x, y)^{-1} d_y }^2 + \frac{\beta^2}{16} \norm{\nabla_y g(x, y)}^2 \geq  \min\Big\{ \frac{1}{4},   \frac{ \beta^2 }{16\hat{\beta}^2}\Big\} \norm{w}^2,
	\end{align*}
	and  the proof is completed. 
\end{proof}

With Propositions \ref{Prop_descrease_approx_sustain} and \eqref{Eq_Prop_descrease_approx_sustain_0}, we are now ready to present our modified
subgradient method for solving \ref{Prob_Pen} in Algorithm 2, and establish its convergence.
%\subsubsection{Designing modified subgradient method}
\begin{algorithm}[htbp]
	\begin{algorithmic}[1]   
		\Require  Function $f$, $g$, initial point $x_0$, $y_0$.
		\For{k = 1,2,...}
		\State Set the tolerance $\varepsilon_{1,k}$ and $\varepsilon_{2,k}$.
		\State Compute $\nabla_y g(\xk, \yk)$. 
		\State Compute an approximated evaluation $w_{k}$ for $\nabla_{yy}^2g(\xk, \yk)^{-1} \nabla_y g(\xk, \yk)$ that satisfies $
		\norm{\nabla_{yy}^2 g(\xk, \yk) w_k - \nabla_y g(\xk, \yk) } \leq \varepsilon_{1,k}$. 
		\State Choose $(d_{x,k}, d_{y,k})$ as an approximated evaluation of $\ca{D}_f(\xk, \yk - w_k)$.
		\State Compute $v_k$ such that $
		\norm{\nabla_{yy}^2 g(\xk, \yk) v_k - d_{y,k}} \leq \varepsilon_{2,k}$.
		\State Update $x_k$ and $y_k$ by 
		\begin{align*}
			&\xkp =  \xk - \eta_k \left( d_{x,k} -   \nabla_{xy}^2 g(\xk,\yk) v_k  \right), \\
			&\ykp =  \yk - \eta_k \hat{\beta} \nabla_y g(\xk,\yk).
		\end{align*}
		\EndFor
		\State Return $x_k$ and $y_k$.
	\end{algorithmic}  
	\caption{A modified subgradient method for solving \ref{Prob_Pen}.}
	\label{Alg:subgradient_SUSTAIN}
\end{algorithm}
\begin{theo}
	Suppose Assumption \ref{Assumption_Sto_subgrad_basic} holds,  $\beta \geq \frac{4Q_gM_f}{\mu^3}$, $\hat{\beta} \geq \beta \cdot \max\big\{\frac{8L_g^2 }{\mu}, \frac{1}{4\mu},  \frac{\mu }{4}\big\}$ and the tolerance $\varepsilon_{1,k}$ and $\varepsilon_{2,k}$ satisfy $
	\lim\limits_{k\to +\infty} \varepsilon_{1,k} = 0$, $\sum\limits_{k = 0}^{+\infty} \varepsilon_{2,k} \etak < + \infty$. 
	Then  every limit point of $\{(\xk, \yk)\}$ generated by Algorithm \ref{Alg:subgradient_SUSTAIN} is a $\ca{D}_f$-stationary point of \ref{Prob_Ori} and $\{h(\xk, y_k)\}$ converges. 
\end{theo}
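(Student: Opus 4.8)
The plan is to reproduce, with $\ca{D}=\hDs$, the argument used for Algorithm~\ref{Alg:subgradient_method}: rewrite the iteration of Algorithm~\ref{Alg:subgradient_SUSTAIN} in the form \eqref{Iter_framework}, verify the six items of Assumption~\ref{Assumption_framework}, and then apply Theorem~\ref{Theo_framework_convergence} together with Proposition~\ref{Prop_field_sustain} and Theorem~\ref{Theo_conservative_equivalence}. First I would introduce the auxiliary closed-graph, compact-convex-valued mapping $\ca{D}_{temp}:\Rn\times\Rp\times\Rp\rightrightarrows\Rn\times\Rp$ obtained from the definition of $\hDs$ by freeing the third argument of $\A$,
\begin{equation*}
	\ca{D}_{temp}(x,y,z):=\left\{\left[\begin{smallmatrix} d_x-\nabla_{xy}^2g(x,y)\nabla_{yy}^2g(x,y)^{-1}d_y \\ \hat{\beta}\,\nabla_y g(x,y) \end{smallmatrix}\right]:\left[\begin{smallmatrix} d_x\\ d_y \end{smallmatrix}\right]\in\ca{D}_f(x,z)\right\},
\end{equation*}
so that $\hDs(x,y)=\ca{D}_{temp}(x,y,\A(x,y))$ for all $(x,y)$.

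Next I would make the identifications
\begin{equation*}
	u_{x,k}=d_{x,k}-\nabla_{xy}^2g(\xk,\yk)\nabla_{yy}^2g(\xk,\yk)^{-1}d_{y,k},\qquad u_{y,k}=\hat{\beta}\,\nabla_y g(\xk,\yk),
\end{equation*}
\begin{equation*}
	\xi_{x,k}=\nabla_{xy}^2g(\xk,\yk)\big(\nabla_{yy}^2g(\xk,\yk)^{-1}d_{y,k}-v_k\big),\qquad \xi_{y,k}=0,
\end{equation*}
which turn Step~7 of Algorithm~\ref{Alg:subgradient_SUSTAIN} into \eqref{Iter_framework} and give $(u_{x,k},u_{y,k})\in\ca{D}_{temp}(\xk,\yk,\yk-w_k)$. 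Items (a)--(b) of Assumption~\ref{Assumption_framework} are immediate from Assumption~\ref{Assumption_Sto_subgrad_basic}(a)--(b). For (c), $\mu$-strong convexity converts the Step~6 tolerance into $\norm{\nabla_{yy}^2g(\xk,\yk)^{-1}d_{y,k}-v_k}\le\mu^{-1}\varepsilon_{2,k}$, hence $\norm{\xi_{x,k}}\le\mu^{-1}L_g\varepsilon_{2,k}$ (using $\norm{\nabla_{xy}^2g}\le L_g$), so $\sum_k\etak\norm{\xi_{x,k}}\le\frac{L_g}{\mu}\sum_k\varepsilon_{2,k}\etak<+\infty$ and $\xi_{y,k}\equiv0$. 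Item (f) is exactly Proposition~\ref{Prop_descrease_approx_sustain} under the stated bounds on $\beta,\hat{\beta}$. For (e), Proposition~\ref{Prop_field_sustain} and Theorem~\ref{Theo_conservative_equivalence} show $0\in\hDs(x,y)$ forces $(x,y)\in\M$, hence $\A(x,y)=y$ and $h(x,y)=f(x,y)$, so $\{h(x,y):0\in\hDs(x,y)\}$ coincides with $\{f(x,y):\text{$(x,y)$ is a $\ca{D}_f$-stationary point of \ref{Prob_Ori}}\}$, which has empty interior by Assumption~\ref{Assumption_Sto_subgrad_basic}(c).

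The crucial item is (d). Since $\varepsilon_{1,k}\to0$ and, by Assumption~\ref{Assumption_Sto_subgrad_basic}(a), the iterates together with $\nabla_y g$ and $\nabla_{yy}^2g^{-1}$ remain in a compact set, Step~4 yields $\lim_{k\to+\infty}\norm{w_k-\nabla_{yy}^2g(\xk,\yk)^{-1}\nabla_y g(\xk,\yk)}=0$; thus along any subsequence with $(x_{k_j},y_{k_j})\to(\tilde x,\tilde y)$ we get $y_{k_j}-w_{k_j}\to\A(\tilde x,\tilde y)$ by continuity of $\A$. Applying Proposition~\ref{Prop_closed_graph_convex_approx} to $\ca{D}_{temp}$ along the convergent sequence $(x_{k_j},y_{k_j},y_{k_j}-w_{k_j})\to(\tilde x,\tilde y,\A(\tilde x,\tilde y))$ gives $\lim_{N\to+\infty}\mathrm{dist}\big(\frac{1}{N}\sum_{j=1}^N(u_{x,k_j},u_{y,k_j})\tp,\hDs(\tilde x,\tilde y)\big)=0$, which is (d). With Assumption~\ref{Assumption_framework} verified, Theorem~\ref{Theo_framework_convergence} shows $\{h(\xk,\yk)\}$ converges and every limit point satisfies $0\in\hDs(x,y)$, and Proposition~\ref{Prop_field_sustain} with Theorem~\ref{Theo_conservative_equivalence} upgrades this to: every limit point is a $\ca{D}_f$-stationary point of \ref{Prob_Ori}.

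I expect the main obstacle to be item (d): Algorithm~\ref{Alg:subgradient_SUSTAIN} never evaluates $\A(\xk,\yk)$ exactly but feeds the inexact surrogate $\yk-w_k$ into the selection of $(d_{x,k},d_{y,k})$, so $(u_{x,k},u_{y,k})$ lies in $\ca{D}_{temp}(\xk,\yk,\yk-w_k)$ rather than in $\hDs(\xk,\yk)$; one must argue carefully that $\varepsilon_{1,k}\to0$ drives the third argument back onto the graph of $\A$ in the limit, and that the averaging/closed-graph mechanism of Proposition~\ref{Prop_closed_graph_convex_approx} is therefore applied to $\ca{D}_{temp}$ and only afterwards specialized to $\hDs$. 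The other items reduce to Propositions~\ref{Prop_field_sustain} and~\ref{Prop_descrease_approx_sustain} and routine estimates already carried out for Algorithm~\ref{Alg:subgradient_method}.
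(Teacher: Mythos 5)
Your proposal is correct and follows essentially the same route as the paper's own proof: the same auxiliary mapping $\ca{D}_{temp}$, the same identifications of $(u_{x,k},u_{y,k})$ and $(\xi_{x,k},\xi_{y,k})$, verification of Assumption \ref{Assumption_framework}(a)--(f) via Propositions \ref{Prop_closed_graph_convex_approx}, \ref{Prop_field_sustain} and \ref{Prop_descrease_approx_sustain}, and the final appeal to Theorems \ref{Theo_framework_convergence} and \ref{Theo_conservative_equivalence}. Your spelled-out justification of item (e) (that $0\in\hDs(x,y)$ forces $(x,y)\in\M$ so $h=f$ there) is only slightly more explicit than the paper's one-line claim, not a different argument.
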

\begin{proof}
	Consider the auxiliary set-valued mapping $\ca{D}_{temp}: \Rn\times \Rp\times \Rp \rightrightarrows \Rn\times \Rp$ that is defined as
	\begin{equation*}
		\ca{D}_{temp}(x, y, z):= \left\{ 
		\left[\begin{smallmatrix}
			d_x - \nabla_{xy}^2g(x, y) \nabla_{yy}^2g(x, y)^{-1} d_y\\
			\hat{\beta} \nabla_y g(x, y)
		\end{smallmatrix}\right]  :    \left[\begin{smallmatrix}
			&d_x \\
			&d_y \\
		\end{smallmatrix}\right] \in \ca{D}_f(x, z) \right\}.
	\end{equation*}
	Then it is easy to verify that $\ca{D}_{temp}$ has closed graph. 
	
	Assumption \ref{Assumption_Sto_subgrad_basic}(a) and \ref{Assumption_Sto_subgrad_basic}(b) implies that  Assumption \ref{Assumption_framework}(a) and \ref{Assumption_framework}(b) hold. 
		Let $\tilde{d}_{x, k} = \bb{E}[d_{x, k} |\ca{F}_k]$, $\tilde{d}_{y, k} = \bb{E}[d_{y, k} |\ca{F}_k]$, and 
		\begin{align*}
			u_{x,k} ={}& \tilde{d}_{x,k} -   \nabla_{xy}^2 g(\xk,\yk)  \nabla_{yy}^2 g(\xk, \yk)^{-1} \tilde{d}_{y,k} , ~~u_{y,k} = \hat{\beta} \nabla_y g(\xk, \yk),\\
			\chi_{x, k} ={}& (d_{x, k} - \tilde{d}_{x, k}) - \nabla_{xy}^2 g(\xk,\yk)\nabla_{yy}^2 g(\xk, \yk)^{-1}( d_{y, k} - \tilde{d}_{y,k}), \\
			\xi_{x,k} ={}&  	\nabla_{xy}^2 g(\xk,\yk) (\nabla_{yy}^2 g(\xk, \yk)^{-1} d_{y,k}	-v_k) + \chi_{x, k}, \quad \xi_{y,k} = 0. 
		\end{align*}
		Then from  Step 6 in Algorithm \ref{Alg:subgradient_SUSTAIN} we obtain $\norm{\xi_{x,k}} \leq   \mu^{-1} L_g \varepsilon_{2,k}$. 
		As a result, Assumption \ref{Assumption_framework}(b) shows that $  \mu^{-1} L_g \sum_{k = 1}^{+\infty} \varepsilon_{2,k} \etak < +\infty$. Furthermore, from Proposition \ref{Prop_MDS} it holds that $\sum_{k = 1}^{+\infty} \chi_{x, k}$ converges to a finite limit almost surely. Therefore, $\sum_{k = 0}^{+\infty} \eta_k\xi_{x,k}  $ converges to a finite limit almost surely, and hence Assumption \ref{Assumption_framework}(c) holds. 
		
		Notice that $\mathrm{dist} \big((u_{x,k}, u_{y,k}), \ca{D}_{temp}(\xk, \yk, \yk - w_k)\big) \to 0$. Moreover, Step 4 in Algorithm \ref{Alg:subgradient_method} shows that $\lim\limits_{k\to +\infty} \norm{w_k - \nabla_{yy}^2g(\xk, \yk) \nabla_y g(\xk, \yk)} = 0$. For any sequence $\{k_j\} \subset \bb{N}$ such that $\lim\limits_{j\to +\infty} (x_{k_j}, y_{k_j}) = (\tilde{x}, \tilde{y})$, $ y_{k_j} - w_{k_j} \to \A(x_{k_j}, y_{k_j})$. 
		Then Proposition \ref{Prop_closed_graph_convex_approx} illustrates that 
		\begin{equation*}
			\small
			\lim_{N\to +\infty}\mathrm{dist}\left( \frac{1}{N}\sum_{j = 1}^{N}\left[\begin{matrix}
				u_{x, k_j} \\
				u_{y, k_j}
			\end{matrix}\right] , \hDs(\tilde{x}, \tilde{y})\right) = 
			\lim_{N\to +\infty}\mathrm{dist}\left( \frac{1}{N}\sum_{j = 1}^{N}\left[\begin{matrix}
				u_{x, k_j} \\
				u_{y, k_j}
			\end{matrix}\right] , \ca{D}_{temp}(\tilde{x}, \tilde{y}, \A(\tilde{x}, \tilde{y}))\right) = 0,
		\end{equation*}
		which guarantees Assumption \ref{Assumption_framework}(d). 
	
	Furthermore, Assumption \ref{Assumption_framework}(e) directly follows from Assumption \ref{Assumption_Sto_subgrad_basic}(c) and Proposition \ref{Prop_field_sustain}, and Assumption \ref{Assumption_framework}(f) is implied by Proposition \ref{Prop_descrease_approx_sustain}. From Theorem \ref{Theo_framework_convergence} and Theorem \ref{Theo_conservative_equivalence}, we can conclude that for the sequence $\{(\xk, \yk)\}$ generated by Algorithm \ref{Alg:subgradient_SUSTAIN}, any cluster point of $\{(\xk, \yk)\}$ is a $\ca{D}_f$-stationary point of \ref{Prob_Ori}, and the sequence $\{h(\xk,\yk)\}$ converges.
\end{proof}

	Similar to Corollary \ref{Coro_basic_sgd_sampling}, the following corollary illustrates that when $(d_{x, k}, d_{y, k})$ in Step 5 of Algorithm \ref{Alg:subgradient_SUSTAIN} is generated by the randomized approaches mentioned in Remark \ref{Rmk_cf_subdifferential}, the yielded sequence $\{(\xk, \yk)\}$ converges to a first-order stationary point of \ref{Prob_Ori}. 
	\begin{coro}
		\label{Coro_TTSA_sampling}
		Suppose  Assumption \ref{Assumption_Sto_subgrad_basic} holds with $\D_f = \partial f$,  $\beta \geq \frac{4Q_gM_f}{\mu^3}$, $\hat{\beta} \geq \beta \cdot \max\big\{\frac{8L_g^2 }{\mu}, \frac{1}{4\mu},  \frac{\mu }{4}\big\}$,  and the tolerance $\varepsilon_{1,k}$ and $\varepsilon_{2,k}$ satisfy $
		\lim\limits_{k\to +\infty} \varepsilon_{1,k} = 0$, $\sum\limits_{k = 0}^{+\infty} \varepsilon_{2,k} \etak < + \infty$. Moreover, suppose $(d_{x, k}, d_{y, k})$ in Algorithm \ref{Alg:subgradient_SUSTAIN} is generated by one of the following schemes in each iteration $k$,
		\begin{itemize}
			\item $(d_{x,k}, d_{y,k}) \in \partial_{\eta_k} f(\xk, \yk-w_k)$;
			\item $(d_{x,k}, d_{y,k}) = \tilde{\partial}_{\eta_k} f(\xk, \yk-w_k; \zeta_{x, k}, \zeta_{y, k})$, where  $(\zeta_{x, k}, \zeta_{y, k})$ is uniformly sampled over $\bb{B}_{ {\delta} }(0)$ and  independent of $\ca{F}_k$.
		\end{itemize}
		Then almost surely,  every limit point of $\{(\xk, \yk)\}$ generated by Algorithm \ref{Alg:subgradient_SUSTAIN} is a first-order stationary point of \ref{Prob_Ori} and $\{h(\xk, y_k)\}$ converges. 
		
\end{coro}

\begin{rmk}
	\label{Remark_relationship_with_TTSA}
	When $f$ is assumed to be Lipschitz smooth over $\Rn \times \Rp$, 
	Algorithm \ref{Alg:subgradient_SUSTAIN} coincides with \eqref{Eq_updating_scheme_TTSA}, which can be regarded as the deterministic version of the TTSA algorithm in \cite{hong2020two}, and the SUSTAIN algorithm with $\eta_t^g = \eta_t^f = 1$ in \cite[Equation (13)-(14)]{khanduri2021near} (i.e. SUSTAIN algorithm without momentum accelerations). Therefore, the deterministic version of TTSA  can be interpreted as an approximated gradient descent algorithm that minimizes \ref{Prob_Pen} over $\Rn\times \Rp$, while  SUSTAIN  can be regarded as a momentum-accelerated (stochastic) gradient method for solving \eqref{Prob_Pen}.  Moreover, as illustrated in Algorithm \ref{Alg:subgradient_SUSTAIN}, we can extend the deterministic version of these algorithms to handle nonsmooth bilevel optimization problems based on our proposed framework. 
\end{rmk}

\subsection{An inexact subgradient method}
Recently, another efficient single-loop approach named STABLE \cite{chen2021single}, is proposed for  nonconvex-strongly-convex bilevel optimization problems where the objective functions are assumed to be Lipschitz smooth over $\Rn\times \Rp$.  The deterministic version of STABLE algorithm employs the following updating schemes,
\begin{equation}
	\label{Eq_updating_scheme_STABLE}
	\begin{aligned}
		& \xkp = \xk - \eta_k \left(\nabla_x f(\xk,\yk) -    \nabla_{xy}^2 g(\xk,\yk) \nabla_{yy}^2 g(\xk, \yk)^{-1} \nabla_y f(\xk,\yk) \right), \\
		&\ykp = \yk - \tau_k \nabla_y g(\xk, \yk)   +\eta_k \nabla_{yx}^2 g(\xk, \yk) \nabla_{yy}^2g(\xk, \yk)^{-1}\Big(\nabla_x f(\xk,\yk)  \\
		&\qquad\qquad   -    \nabla_{xy}^2 g(\xk,\yk) \nabla_{yy}^2 g(\xk, \yk)^{-1} \nabla_y f(\xk,\yk) \Big).  
	\end{aligned}
\end{equation}
Here the $x$-variable takes an approximated gradient descent step for $\Phi(x)$. However, the updating schemes of $y$-variable can be hard to understand by regarding STABLE algorithm as an approximated gradient descent algorithm for minimizing $\Phi(x)$.

In this subsection, we propose an inexact subgradient method based on our proposed framework with the following set-valued mapping $\hDp(x,y)$,
\begin{equation}
	\hDp(x, y) = W(x, y)\tp W(x, y) \ca{D}_f(x, \A(x, y)) + 
	\left[\begin{smallmatrix}
		0\\
		\beta \nabla_y g(x, y)\\
	\end{smallmatrix} \right],
\end{equation}
where $W(x,y) \in \bb{R}^{n\times (n+p)}$ is defined by $W(x,y) = \left[ I_n, -\nabla_{xy}^2g(x, y) \nabla_{yy}^2g(x, y)^{-1} \right]$.

We first prove that $\hDp(x,y)$ has compact and convex values, and satisfies the Assumption \ref{Assumption_framework}(f). Moreover, based on $\hDp(x,y)$, we propose a subgradient method as presented in Algorithm \ref{Alg:STABLE_Deterministic} and show its global convergence properties directly from our proposed framework. A discussion on how to understand STABLE algorithm based on \ref{Prob_Pen} is presented at the end of this subsection.

In the next two propositions, we establish some properties of $ \hDp(x, y)$.
\begin{prop}
	\label{Prop_Subgradient_Equivalence_STABLE_1}
	Suppose $\beta \geq \frac{2M_fQ_g}{\mu^3}$. Then for any $(x,y) \in \Rn\times \Rp$, $0 \in \Dh(x,y)$ if and only if $	0 \in \hDp(x, y)$. 
\end{prop}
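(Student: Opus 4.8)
The plan is to follow the template of the proofs of Proposition \ref{Prop_field_approx} and Proposition \ref{Prop_field_sustain}: reduce both implications to the characterization, supplied by Theorem \ref{Theo_conservative_equivalence} together with Proposition \ref{Prop_conservative_feasible}, that $0\in\Dh(x,y)$ holds exactly when $(x,y)\in\M$ and $(x,y)$ is a $\ca{D}_f$-stationary point of \ref{Prob_Ori}. The first step is to make the structure of $\hDp$ explicit. Writing $B:=\nabla_{xy}^2 g(x,y)\nabla_{yy}^2 g(x,y)^{-1}$ and using the symmetry of $\nabla_{yy}^2 g(x,y)$, a direct computation gives $W(x,y)\tp W(x,y)\left[\begin{smallmatrix} d_x \\ d_y \end{smallmatrix}\right]=\left[\begin{smallmatrix} r \\ -B\tp r \end{smallmatrix}\right]$ with $r:=d_x-Bd_y$, hence
\begin{equation*}
	\hDp(x,y)=\left\{\left[\begin{smallmatrix} r \\ -B\tp r+\beta\nabla_y g(x,y) \end{smallmatrix}\right]:\left[\begin{smallmatrix} d_x \\ d_y \end{smallmatrix}\right]\in\ca{D}_f(x,\A(x,y)),\ r=d_x-Bd_y\right\}.
\end{equation*}
The point to retain is that the penalty term $\beta\nabla_y g(x,y)$ enters only the second block, whereas the first block of any element of $\hDp(x,y)$ is exactly $r$.

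For the direction $0\in\hDp(x,y)\Rightarrow 0\in\Dh(x,y)$, I would argue as follows. If $0\in\hDp(x,y)$, there is $(d_x,d_y)\in\ca{D}_f(x,\A(x,y))$ whose first block vanishes, i.e. $r=d_x-\nabla_{xy}^2 g(x,y)\nabla_{yy}^2 g(x,y)^{-1}d_y=0$; substituting $r=0$ into the (vanishing) second block leaves $\beta\nabla_y g(x,y)=0$, and since $\beta>0$ this forces $\nabla_y g(x,y)=0$, so $(x,y)\in\M$ and $\A(x,y)=y$. Then $(d_x,d_y)\in\ca{D}_f(x,y)$ satisfies both equalities in Definition \ref{Defin_FOSP_Ori_field}, so $(x,y)$ is a $\ca{D}_f$-stationary point of \ref{Prob_Ori}, and Proposition \ref{Prop_conservative_feasible} yields $0\in\Dh(x,y)$. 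This direction uses only $\beta>0$.

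For the converse, suppose $0\in\Dh(x,y)$. Theorem \ref{Theo_conservative_equivalence} — this is where the bound $\beta\geq\frac{2M_fQ_g}{\mu^3}$ is needed — gives $(x,y)\in\M$, hence $\nabla_y g(x,y)=0$ and $\A(x,y)=y$; Proposition \ref{Prop_conservative_feasible} then provides $(d_x,d_y)\in\ca{D}_f(x,y)$ with $d_x-\nabla_{xy}^2 g(x,y)\nabla_{yy}^2 g(x,y)^{-1}d_y=0$, i.e. $r=0$. Feeding this $(d_x,d_y)$ into the displayed description of $\hDp(x,y)$ produces the element $\left[\begin{smallmatrix} r \\ -B\tp r+\beta\nabla_y g(x,y) \end{smallmatrix}\right]=\left[\begin{smallmatrix} 0 \\ 0 \end{smallmatrix}\right]$, so $0\in\hDp(x,y)$.

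I do not expect a genuine obstacle: the argument is essentially a transcription of the proofs already given for $\hDh$ and $\hDs$, and the only step requiring care is the identity $W\tp W\left[\begin{smallmatrix} d_x \\ d_y \end{smallmatrix}\right]=\left[\begin{smallmatrix} r \\ -B\tp r \end{smallmatrix}\right]$, which is precisely what guarantees that the cross term $-B\tp r$ disappears once $r=0$, so that the penalty term $\beta\nabla_y g(x,y)$ is isolated in the second component and the stationarity equivalence goes through.
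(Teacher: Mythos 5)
Your proposal is correct and follows essentially the same route as the paper: both directions hinge on the identity $W(x,y)\tp W(x,y)\left[\begin{smallmatrix} d_x \\ d_y\end{smallmatrix}\right]=\left[\begin{smallmatrix} r \\ -B\tp r\end{smallmatrix}\right]$ with $r=d_x-\nabla_{xy}^2 g(x,y)\nabla_{yy}^2 g(x,y)^{-1}d_y$, combined with Theorem \ref{Theo_conservative_equivalence} (which supplies feasibility and where the bound on $\beta$ enters) and the stationarity characterization on $\M$. Your extra remarks — writing out the block structure explicitly and noting that the implication $0\in\hDp(x,y)\Rightarrow 0\in\Dh(x,y)$ only needs $\beta>0$ — are accurate refinements of the same argument, not a different one.
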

\begin{proof}
	When $0 \in \Dh(x,y)$, it holds from Theorem \ref{Theo_conservative_equivalence} that $(x,y) \in \M$ and there exists $ (d_x, d_y) \in \ca{D}_f(x, y)$ such that $0 =  d_x - \nabla_{xy}^2g(x, y) \nabla_{yy}^2g(x, y)^{-1}d_y \in W(x,y)\ca{D}_f(x,y)$. Therefore, we conclude that 
	\begin{equation*}
		0 \in W(x, y)\tp W(x, y) \ca{D}_f(x, y) + 
		\left[\begin{smallmatrix}
			0\\
			\beta \nabla_y g(x, y)\\
		\end{smallmatrix} \right] = \hDp(x,y). 
	\end{equation*}				
	On the other hand, suppose $0 \in \hDp(x, y)$, then there exists $(d_x, d_y) \in \ca{D}_f(x, \A(x, y))$ such that 
	\begin{align*}
		&d_x - \nabla_{xy}^2g(x, y) \nabla_{yy}^2g(x, y)^{-1}d_y = 0 \label{Eq_Prop_Subgradient_Equivalence_STABLE_1}\\
		& -\nabla_{yy}^2g(x, y)^{-1}\nabla_{yx} g(x, y)  \left(d_x - \nabla_{xy}^2g(x, y) \nabla_{yy}^2g(x, y)^{-1}d_y\right) + \beta \nabla_y g(x, y) = 0. 
	\end{align*}
	As a result, it holds that $\nabla_y g(x, y) = 0$ and hence $(x, y) \in \M$. Together with \eqref{Eq_updating_scheme_STABLE} and Definition \ref{Defin_FOSP_Ori_field}, we obtain that $(x, y)$ is a $\Dh$-stationary point of \ref{Prob_Ori}.
\end{proof}

\begin{prop}
	\label{Prop_STABLE_descent}
	Suppose $\beta \geq \max\left\{ \frac{8M_fQ_g}{\mu^{3}}, \frac{4M_fQ_gL_g}{\mu^{3.5}}\right\}$. Then for any $(x, y) \in \Rn\times \Rp$ and any $w \in \hDp(x, y)$, it holds that
	\begin{equation*}
		\sup_{z \in \Dh(x, y)} \inner{z, w} \geq \min\left\{ \frac{\mu^2}{4L_g^2}, \frac{\mu}{4} \right\} \norm{w_1 + w_2}^2.
	\end{equation*}
\end{prop}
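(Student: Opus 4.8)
The plan is to imitate the proof of Proposition~\ref{Prop_descrease_approx_sustain}: fix an element $w\in\hDp(x,y)$, exhibit a \emph{matching} element $z\in\Dh(x,y)$ generated by the same conservative subgradient of $f$, and bound $\inner{z,w}$ from below. First I would unpack the definition of $\hDp$: there is $(d_x,d_y)\in\ca{D}_f(x,\A(x,y))$ with
\[
w \;=\; W(x,y)\tp W(x,y)\left[\begin{smallmatrix}d_x\\ d_y\end{smallmatrix}\right] \;+\; \left[\begin{smallmatrix}0\\ \beta\nabla_y g(x,y)\end{smallmatrix}\right] \;=:\; w_1 + w_2 ,
\]
and, setting $s := W(x,y)\left[\begin{smallmatrix}d_x\\ d_y\end{smallmatrix}\right] = d_x - \nabla_{xy}^2 g\,(\nabla_{yy}^2 g)^{-1} d_y \in \Rn$, one gets $w_1 = \left[\begin{smallmatrix} s\\ -(\nabla_{yy}^2 g)^{-1}\nabla_{yx}^2 g\, s\end{smallmatrix}\right]$ and $\norm{w_1+w_2}=\norm{w}$. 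As the matching element I would use the $z\in\Dh(x,y)$ built from the \emph{same} $(d_x,d_y)$, reusing the decomposition $z = z_1 + z_2 + z_3$ from the proof of Proposition~\ref{Prop_descrease_approx_sustain}: $z_1 = \left[\begin{smallmatrix} s\\ \beta\nabla_{yy}^2 g\,\nabla_y g\end{smallmatrix}\right]$, $z_2 = \left[\begin{smallmatrix} \beta\nabla_{xy}^2 g\,\nabla_y g\\ 0\end{smallmatrix}\right]$, and $z_3$ collecting the two third-order terms carried by $\JAx$ and $\JAy$, so that Assumption~\ref{Assumption_1} together with $\norm{(d_x,d_y)}\le M_f$ gives $\norm{z_3}\le \tfrac{\sqrt{2}\,Q_g M_f}{\mu^2}\norm{\nabla_y g}$.

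The crucial step — and the one that carries the argument — is a cross-term cancellation. When pairing $w$ with $z_1+z_2$, the quantity $\inner{-(\nabla_{yy}^2 g)^{-1}\nabla_{yx}^2 g\, s,\ \beta\nabla_{yy}^2 g\,\nabla_y g} = -\beta\inner{s,\,\nabla_{xy}^2 g\,\nabla_y g}$ produced by the second block of $w$ against $z_1$ exactly cancels $\inner{s,\ \beta\nabla_{xy}^2 g\,\nabla_y g}$ produced by $z_2$; using $\mu$-strong convexity of $g$ in $y$ for the remaining $z_1$ term then yields $\inner{w,\,z_1+z_2}\ge \norm{s}^2 + \beta^2\mu\norm{\nabla_y g}^2$. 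This cancellation is exactly the effect the Gram operator $W(x,y)\tp W(x,y)$ in the definition of $\hDp$ is designed to produce; without it there would survive a term of order $\beta L_g\norm{s}\norm{\nabla_y g}$ that cannot be absorbed when $\norm{\nabla_y g}$ is small, and the descent estimate would break.

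With the clean part in hand, I would estimate the remainder by $\inner{w,z_3}\ge -\norm{z_3}\norm{w}\ge -\tfrac{\sqrt{2}\,Q_g M_f}{\mu^2}\norm{\nabla_y g}\norm{w}$, apply Young's inequality to the product $\norm{\nabla_y g}\norm{w}$ with a Young parameter of order $\mu^2/L_g^2$, and substitute the bound $\norm{w}^2 \le (1+2L_g^2/\mu^2)\norm{s}^2 + 2\beta^2\norm{\nabla_y g}^2 \le \tfrac{3L_g^2}{\mu^2}\norm{s}^2 + 2\beta^2\norm{\nabla_y g}^2$ (the last step using $\mu\le L_g$), so as to keep a coefficient $\tfrac34$ in front of $\norm{s}^2$. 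The two lower bounds on $\beta$ then absorb the leftover $\norm{\nabla_y g}^2$ terms: $\beta\ge \tfrac{8 M_f Q_g}{\mu^3}$ takes care of the $\tfrac{\beta Q_g M_f}{\mu^2}\norm{\nabla_y g}^2$ contribution, and $\beta\ge \tfrac{4 M_f Q_g L_g}{\mu^{3.5}}$ takes care of the $\tfrac{Q_g^2 M_f^2 L_g^2}{\mu^{6}}\norm{\nabla_y g}^2$ contribution created by Young's inequality, leaving $\inner{w,z}\ge \tfrac34\norm{s}^2 + \tfrac{\beta^2\mu}{2}\norm{\nabla_y g}^2$.

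Finally I would convert this into the stated estimate. Comparing $\inner{w,z}\ge \tfrac34\norm{s}^2 + \tfrac{\beta^2\mu}{2}\norm{\nabla_y g}^2$ term by term with $\norm{w_1+w_2}^2 = \norm{w}^2 \le \tfrac{3L_g^2}{\mu^2}\norm{s}^2 + 2\beta^2\norm{\nabla_y g}^2$ gives $\sup_{z\in\Dh(x,y)}\inner{z,w}\ge \min\bigl\{\tfrac{\mu^2}{4L_g^2},\,\tfrac{\mu}{4}\bigr\}\norm{w_1+w_2}^2$, which is the claim. I expect the only real friction to be the constant bookkeeping — selecting the Young parameter and balancing the two $\beta$-thresholds so that precisely $\tfrac{\mu^2}{4L_g^2}$ and $\tfrac{\mu}{4}$ emerge; conceptually the whole proof is the cancellation of the second paragraph together with the cubic estimates $\norm{\nabla_{yyx}^3 g[d]}\le Q_g\norm{d}$ and $\norm{\nabla_{yyy}^3 g[d]}\le Q_g\norm{d}$ recorded in the preliminaries.
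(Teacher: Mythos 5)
Your proposal is correct and follows essentially the same route as the paper: the same matching element $z_1+z_2+z_3\in\Dh(x,y)$ built from the same $(d_x,d_y)$, the same key cancellation (which the paper records as $\inner{z_3,w_1}=0$, since $W(x,y)$ annihilates the vector $\bigl(\beta\nabla_{xy}^2 g\,\nabla_y g,\ \beta\nabla_{yy}^2 g\,\nabla_y g\bigr)$ — exactly your cross-term cancellation), the same strong-convexity and third-order bounds, and the same absorption of the error via the two thresholds on $\beta$. The only difference is bookkeeping: the paper pairs the third-order term separately against $w_1$ and $w_2$ (absorbing the $w_2$-part with $\beta\geq 8M_fQ_g/\mu^3$ and only the $w_1$-part by Cauchy with $\beta\geq 4M_fQ_gL_g/\mu^{3.5}$), whereas your single Young step on $\norm{\nabla_y g}\,\norm{w}$ works too but requires choosing the Young parameter as roughly $\min\{\mu^2/(6L_g^2),\,Q_gM_f/(\beta\mu^2)\}$ rather than always of order $\mu^2/L_g^2$, since with the latter fixed choice the $\epsilon\beta^2\norm{\nabla_y g}^2$ leakage is not absorbable in the regime $\mu>\tfrac{3}{4}L_g^2$.
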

\begin{proof}
	For any $(d_x, d_y) \in \ca{D}_f(x, \A(x, y))$, let $z_1$, $z_2$ and $z_3$ be defined as
	\begin{equation*}
		\begin{aligned}
			&z_1 = \left[\begin{smallmatrix}
				&d_x -\nabla_{xy}^2g(x, y) \nabla_{yy}^2g(x, y)^{-1} d_y \\
				& 0 \\
			\end{smallmatrix}\right],\quad z_2 = \left[ \begin{smallmatrix}
				& \nabla_{xyy}^3 g(x, y)[\nabla_{yy}^2g(x, y)^{-1}\nabla_y g(x, y)] \nabla_{yy}^2g(x, y)^{-1} d_y \\
				&  \nabla_{yyy}^3 g(x, y)[\nabla_{yy}^2g(x, y)^{-1}\nabla_y g(x, y)] \nabla_{yy}^2g(x, y)^{-1} d_y
			\end{smallmatrix} \right],\\
			&z_3 = \left[\begin{smallmatrix}
				& \beta \nabla_{xy}^2 g(x, y) \nabla_y g(x, y) \\
				& \beta \nabla_{yy}^2 g(x, y) \nabla_y g(x, y) \\
			\end{smallmatrix}\right], \quad w_1 = W(x,y)\tp  W(x,y)
			\left[
			\begin{smallmatrix}
				d_x\\
				d_y\\
			\end{smallmatrix}
			\right], \quad w_2 = \left[\begin{smallmatrix}
				0\\
				\beta \nabla_y g(x, y)\\
			\end{smallmatrix} \right].
		\end{aligned}
	\end{equation*}
	Then it holds that $z_1 + z_2 + z_3 \in \Dh(x, y)$, and $w_1 + w_2 \in \hDp(x, y)$.  
	Moreover, as $\norm{z_2} \leq \frac{2M_f Q_g }{\mu^2} \norm{\nabla_y g(x, y)}$, we obtain the following inequalities through simple calculations, 
	\begin{align*}
		& \inner{z_1, w_1} = \norm{z_1}^2 , ~ \inner{z_2, w_1} \geq -  \left(\frac{2M_f Q_g }{\mu^2}\right) \norm{\nabla_y g(x, y)}\norm{w_1}, ~\inner{z_3, w_1} = 0,\\
		& \inner{z_1, w_2} = 0, ~ 
		\inner{z_2, w_2} \geq - \frac{2M_f Q_g\beta }{\mu^2} \norm{\nabla_y g(x, y)}^2, ~ \inner{z_3, w_2}\geq \mu\beta^2 \norm{\nabla_y g(x, y)}^2.
	\end{align*}
	%		\begin{align*}
	%			&\inner{z_1, w_1} = \norm{w_1}^2, ~\inner{z_3, w_1} = 0, ~\inner{z_1, w_2} = 0,\\
	%			& \inner{z_3, w_2} \geq \mu\beta^2 \norm{\nabla_y g(x, y)}^2, ~\norm{z_2} \leq \frac{2M_f Q_g }{\mu^2} \norm{\nabla_y g(x, y)}. 
	%		\end{align*}
	By Cauchy's inequality, it holds from $\beta \geq \frac{4M_fQ_gL_g}{\mu^{3.5}}$ that 
	\begin{equation*}
		\frac{\mu^2}{4L_g^2 } \norm{w_1}^2 + \frac{\mu\beta^2}{4} \norm{\nabla_y g(x, y)}^2 \geq  \frac{2M_f Q_g }{\mu^2} \norm{\nabla_y g(x, y)}\norm{w_1}.
	\end{equation*}
	Therefore, we get
	\begin{equation*}
		\begin{aligned}
			&\inner{z_1+ z_2 + z_3, w_1 + w_2}\\
			\geq{}& \norm{z_1}^2 + \mu\beta^2 \norm{\nabla_y g(x, y)}^2 - \left(\frac{2M_f Q_g }{\mu^2}\right) \norm{\nabla_y g(x, y)}\norm{w_1} 
			- \frac{2M_f Q_g\beta }{\mu^2} \norm{\nabla_y g(x, y)}^2\\ 
			\geq{}& \frac{\mu^2}{2L_g^2  }\norm{w_1}^2 
			+ \mu\beta^2 \norm{\nabla_y g(x, y)}^2 - \left(\frac{2M_f Q_g }{\mu^2}\right) \norm{\nabla_y g(x, y)}\norm{w_1} - \frac{\mu\beta^2}{4}\norm{\nabla_y g(x,y)}^2\\ 
			\geq{}& \frac{\mu^2}{4L_g^2 } \norm{w_1}^2 + \frac{\mu\beta^2}{4} \norm{\nabla_y g(x, y)}^2
			\geq \min\left\{ \frac{\mu^2}{4L_g^2}, \frac{\mu}{4} \right\} \norm{w_1 + w_2}^2,
		\end{aligned}
	\end{equation*}
	and this completes the proof. 
\end{proof}

%\subsubsection{An inexact subgradient method}
With Propositions \ref{Prop_Subgradient_Equivalence_STABLE_1} and \ref{Prop_STABLE_descent}, we can now present our inexact subgradient method for solving 
\ref{Prob_Pen} in Algorithm 3 and establish its convergence.
\begin{algorithm}[htbp]
	\begin{algorithmic}[1]   
		\Require  Function $f$, $g$, initial point $x_0$, $y_0$.
		\For{k = 1,2,...}
		\State Compute $w_k$ by approximately evaluating $\nabla_{yy}^2 g(\xk, \yk)^{-1} \nabla_y g(\xk, \yk)$ such that $
		\norm{\nabla_{yy}^2 g(\xk, \yk) w_k - \nabla_y g(\xk, \yk) } \leq \varepsilon_{1,k} $.
		\State Choose $(d_{x,k}, d_{y,k})$ as an approximated evaluation of $\ca{D}_f(\xk, \yk - w_k) $.
		\State Compute $p_{x,k} = d_{x,k} -    \nabla_{xy}^2 g(\xk,\yk) \nabla_{yy}^2 g(\xk, \yk)^{-1} d_{y,k}$.
		\State Update $\xk$ and $\yk$  by 
		\begin{align*}
			&\xkp =  \xk - \eta_k p_{x,k},\\
			& \ykp = \yk - \etak\left(\beta \nabla_y g(\xk, \yk) -\nabla_{yx}^2 g(\xk, \yk) \nabla_{yy}^2g(\xk, \yk)^{-1}p_{x,k}\right). 
		\end{align*}
		\EndFor
		\State Return $x_k$ and $y_k$.
	\end{algorithmic}  
	\caption{Inexact subgradient method for solving \ref{Prob_Pen}.}
	\label{Alg:STABLE_Deterministic}
\end{algorithm}

\begin{theo}
	Suppose Assumption \ref{Assumption_Sto_subgrad_basic} holds, $\beta \geq \max\left\{\frac{8M_fQ_g}{\mu^{3}}, \frac{4M_fQ_gL_g}{\mu^{3.5}}\right\}$ and $\lim_{k\to +\infty} \varepsilon_{1,k} = 0$. 
	Then every limit point of $\{(\xk, \yk)\}$ generated by Algorithm \ref{Alg:STABLE_Deterministic} is a $\ca{D}_f$-stationary points of \ref{Prob_Ori} and $\{h(\xk, y_k)\}$ converges. 
\end{theo}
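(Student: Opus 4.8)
The plan is to recognize Algorithm \ref{Alg:STABLE_Deterministic} as a concrete instance of the generalized recursion \eqref{Iter_framework} with surrogate field $\ca{D} = \hDp$, and then to invoke Theorem \ref{Theo_framework_convergence}. The one feature that does not fit the mold of the previous two convergence proofs is that here the only source of inexactness is the approximate evaluation $w_k$ of $\nabla_{yy}^2 g(\xk,\yk)^{-1}\nabla_y g(\xk,\yk)$ in Step~2, which enters through the \emph{argument} $\yk - w_k$ of $\ca{D}_f$ rather than as an additive perturbation of the descent direction. To absorb it, I would introduce the auxiliary mapping $\ca{D}_{temp}\colon \Rn\times\Rp\times\Rp \rightrightarrows \Rn\times\Rp$,
\[
	\ca{D}_{temp}(x,y,z) := W(x,y)\tp W(x,y)\,\ca{D}_f(x,z) + \left[\begin{smallmatrix} 0 \\ \beta\nabla_y g(x,y)\end{smallmatrix}\right],
\]
observe that it has closed graph and compact convex values (a translate of a continuous linear image of the compact convex set $\ca{D}_f(x,z)$, with $W$ continuous), and record the identity $\hDp(x,y) = \ca{D}_{temp}(x,y,\A(x,y))$.

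Next I would verify Assumption \ref{Assumption_framework} for the iterates. Writing the exact search direction of Step~5 as $[u_{x,k};u_{y,k}] = W(\xk,\yk)\tp W(\xk,\yk)[d_{x,k};d_{y,k}] + [0;\beta\nabla_y g(\xk,\yk)]$, one has $[u_{x,k};u_{y,k}]\in\ca{D}_{temp}(\xk,\yk,\yk-w_k)$, and since there is no further approximation inside the direction, the perturbations vanish, $\xi_{x,k}=\xi_{y,k}=0$, so (c) holds trivially with $v_x=0$, $v_y=0$, while (a)--(b) are exactly Assumption \ref{Assumption_Sto_subgrad_basic}(a)--(b). For (d): $\mu$-strong convexity of $g$ in $y$ and $\varepsilon_{1,k}\to 0$ give $\norm{w_k - \nabla_{yy}^2 g(\xk,\yk)^{-1}\nabla_y g(\xk,\yk)}\to 0$, so along any subsequence with $(x_{k_j},y_{k_j})\to(\tilde x,\tilde y)$, continuity of $\A$ yields $y_{k_j}-w_{k_j}\to\A(\tilde x,\tilde y)$, and Proposition \ref{Prop_closed_graph_convex_approx} applied to $\ca{D}_{temp}$ produces the Ces\`aro-distance condition with limit set $\ca{D}_{temp}(\tilde x,\tilde y,\A(\tilde x,\tilde y))=\hDp(\tilde x,\tilde y)$. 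Item (e) follows by combining Assumption \ref{Assumption_Sto_subgrad_basic}(c) with Proposition \ref{Prop_Subgradient_Equivalence_STABLE_1} and Theorem \ref{Theo_conservative_equivalence}, which together identify $\{h(x,y):0\in\hDp(x,y)\}$ with the set $\{f(x,y):(x,y)\text{ is a }\ca{D}_f\text{-stationary point of \ref{Prob_Ori}}\}$ postulated thin in Assumption \ref{Assumption_Sto_subgrad_basic}(c). Finally, (f) is precisely Proposition \ref{Prop_STABLE_descent} with $\delta=\min\{\mu^2/(4L_g^2),\mu/4\}$, using that each $w\in\hDp(x,y)$ is the sum $w_1+w_2$ appearing there; note that the lower bound on $\beta$ in the hypothesis is exactly what Propositions \ref{Prop_Subgradient_Equivalence_STABLE_1} and \ref{Prop_STABLE_descent} demand.

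Once Assumption \ref{Assumption_framework} is in place, Theorem \ref{Theo_framework_convergence} delivers that every limit point of $\{(\xk,\yk)\}$ satisfies $0\in\hDp(x,y)$ and that $\{h(\xk,\yk)\}$ converges; Proposition \ref{Prop_Subgradient_Equivalence_STABLE_1} then turns $0\in\hDp(x,y)$ into $0\in\Dh(x,y)$, and Theorem \ref{Theo_conservative_equivalence} turns that into ``$(x,y)$ is a $\ca{D}_f$-stationary point of \ref{Prob_Ori}'', which is the claimed statement. I expect the only genuinely non-routine step to be the bookkeeping around (d)--(e): one must park the inexact quantity $\yk-w_k$ in the third coordinate of $\ca{D}_{temp}$ (the slot occupied by additive noise $\xi_k$ in Algorithms \ref{Alg:subgradient_method} and \ref{Alg:subgradient_SUSTAIN} is here empty, because the Hessian solve producing the direction is performed exactly), and one must match the weak-Sard set attached to $\hDp$ with the thin set in Assumption \ref{Assumption_Sto_subgrad_basic}(c); both collapse to the equivalence results already established, so no new estimate is required.
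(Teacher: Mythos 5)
Your proposal is correct and matches the paper's own proof essentially step for step: the same auxiliary mapping $\ca{D}_{temp}(x,y,z)=W(x,y)\tp W(x,y)\ca{D}_f(x,z)+\bigl[\begin{smallmatrix}0\\ \beta\nabla_y g(x,y)\end{smallmatrix}\bigr]$ with $\hDp(x,y)=\ca{D}_{temp}(x,y,\A(x,y))$, zero noise terms $\xi_{x,k}=\xi_{y,k}=0$, Proposition \ref{Prop_closed_graph_convex_approx} for Assumption \ref{Assumption_framework}(d), Propositions \ref{Prop_Subgradient_Equivalence_STABLE_1} and \ref{Prop_STABLE_descent} for (e) and (f), and then Theorem \ref{Theo_framework_convergence} together with the equivalence results to conclude. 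Your explicit identification of $\{h(x,y):0\in\hDp(x,y)\}$ with $\{f(x,y):(x,y)\ \text{is a}\ \ca{D}_f\text{-stationary point of BLO}\}$ is a detail the paper leaves implicit, but it is the same argument.
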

\begin{proof}
		Assumption \ref{Assumption_Sto_subgrad_basic}(a) and \ref{Assumption_Sto_subgrad_basic}(b) imply that  Assumption \ref{Assumption_framework}(a) and \ref{Assumption_framework}(b) hold. 
		Let $
		\hat{\ca{D}}_{temp}(x, y, z) := W(x, y)\tp W(x, y) \ca{D}_f(x, z) + 
		\left[\begin{smallmatrix}
			0\\
			\beta \nabla_y g(x, y)\\
		\end{smallmatrix} \right]$, $\tilde{d}_{x, k} = \bb{E}[d_{x, k} |\ca{F}_k]$, $\tilde{d}_{y, k} = \bb{E}[d_{y, k} |\ca{F}_k]$, and  
		\begin{align*}
			u_{x,k} ={}& \tilde{d}_{x,k} -   \nabla_{xy}^2 g(\xk,\yk) \nabla_{yy}^2 g(\xk, \yk)^{-1} \tilde{d}_{y,k} ,\\
			\hat{\chi}_{x,k} ={}& (d_{x,k} - \tilde{d}_{x,k}) -   \nabla_{xy}^2 g(\xk,\yk) \nabla_{yy}^2 g(\xk, \yk)^{-1}  (d_{y,k} -\tilde{d}_{y,k}) ,\\
			u_{y,k} ={}&  \beta \nabla_y g(\xk, \yk) -   \nabla_{yx}^2 g(\xk,\yk) \nabla_{yy}^2 g(\xk, \yk)^{-1} u_{x, k}, \\
			\xi_{x,k} ={}& - \nabla_{yx}^2 g(\xk,\yk) \nabla_{yy}^2 g(\xk, \yk)^{-1} \hat{\chi}_{x,k}, \quad \xi_{y,k} = 0. 
		\end{align*}
		It is easy to verify the validity of Assumption \ref{Assumption_framework}(c) from Proposition \ref{Prop_MDS}. Moreover, $\hat{\ca{D}}_{temp}(x, y, \A(x, y)) = \hDp(x, y)$ holds for any $(x,y) \in \Rn\times \Rp$, and 
		\begin{equation*}
			\lim_{k\to +\infty} \mathrm{dist}\left((u_{x,k}, u_{y,k}) , \hat{\ca{D}}_{temp}(\xk, \yk, \yk - w_k)\right) = 0.
		\end{equation*} 
		Furthermore, notice that $\lim_{k\to +\infty} \varepsilon_{1,k} = 0$. Then for any subsequence $\{ (x_{k_j}, y_{k_j}) \}$ that converges to $\{(\tilde{x},\tilde{y})\}$, it holds that $(\xk, \yk, \yk-w_k)$ converges to $(\tilde{x}, \tilde{y}, \A(\tilde{x}, \tilde{y}))$. Then  Proposition \ref{Prop_closed_graph_convex_approx} illustrates that 
		$
		\lim\limits_{N\to +\infty}\mathrm{dist}\left( \frac{1}{N}\sum_{j = 1}^{N}\left[\begin{matrix}
			u_{x, k_j} \\
			u_{y, k_j}
		\end{matrix}\right] , \hDp(\tilde{x}, \tilde{y})\right)  = 0$, 
		which verifies the validity of Assumption \ref{Assumption_framework}(d). Additionally,  Assumption \ref{Assumption_Sto_subgrad_basic}(c) implies Assumption \ref{Assumption_framework}(e), and Proposition \ref{Prop_STABLE_descent} guarantees the validity of Assumption \ref{Assumption_framework}(f). Then from Theorem \ref{Theo_framework_convergence}, we can conclude that $\{h(\xk, \yk)\}$ converges and any cluster point of $\{(\xk, \yk)\}$ yielded by Algorithm \ref{Alg:STABLE_Deterministic} is a $\ca{D}_f$-stationary point of \ref{Prob_Ori}. 
	\end{proof}
	
	Similar to Corollary \ref{Coro_basic_sgd_sampling} and Corollary \ref{Coro_TTSA_sampling}, we have the following corollary illustrating that $\{(\xk, \yk)\}$ weakly converges to first-order stationary points of \ref{Prob_Ori} when $(d_{x, k}, d_{y,k})$ is generated by some randomized approaches mentioned in Remark \ref{Rmk_cf_subdifferential}.

	\begin{coro}
		Suppose Assumption \ref{Assumption_Sto_subgrad_basic} holds with $\D_f = \partial f$,  $\lim_{k\to +\infty} \varepsilon_{1,k} = 0$, and $\beta \geq \max\left\{\frac{8M_fQ_g}{\mu^{3}}, \frac{4M_fQ_gL_g}{\mu^{3.5}}\right\}$. Moreover, suppose $(d_{x, k}, d_{y, k})$ in Algorithm \ref{Alg:subgradient_SUSTAIN} is generated by one of the following schemes in each iteration $k$,
		\begin{itemize}
			\item $(d_{x,k}, d_{y,k}) \in \partial_{\eta_k} f(\xk, \yk-w_k)$;
			\item $(d_{x,k}, d_{y,k}) = \tilde{\partial}_{\eta_k} f(\xk, \yk-w_k; \zeta_{x, k}, \zeta_{y, k})$, where  $(\zeta_{x, k}, \zeta_{y, k})$ is uniformly sampled over $\bb{B}_{ {\delta} }(0)$ and  independent of $\ca{F}_k$. 
		\end{itemize}
		Then every limit point of $\{(\xk, \yk)\}$ generated by Algorithm \ref{Alg:STABLE_Deterministic} is a first-order stationary point of \ref{Prob_Ori} and $\{h(\xk, y_k)\}$ converges. 
	\end{coro}

\begin{rmk}
	When $f$ is assumed to be Lipschitz smooth over $\Rn \times \Rp$, Algorithm \ref{Alg:STABLE_Deterministic} coincides with the updating schemes \eqref{Eq_updating_scheme_STABLE} of the deterministic version of the STABLE algorithm. As illustrated in Proposition \ref{Prop_STABLE_descent},  the deterministic version of STABLE  can be regarded as a descent algorithm for $h$ in \ref{Prob_Pen}  in each iteration. This provides a clear understanding  
	of the convergence properties of the STABLE algorithm, and demonstrates the efficiency of Algorithm \ref{Alg:STABLE_Deterministic}. Moreover, according to Step 4 in Algorithm \ref{Alg:STABLE_Deterministic}, the stepsizes $\eta_k$ and $\tau_k$ in \eqref{Eq_updating_scheme_STABLE} should satisfy $\tau_k = \beta \eta_k$, which further explains the different theoretical bounds for  $\eta_k$ and $\tau_k$ suggested in  \cite[Theorem 2]{chen2021single}. 
	Therefore, we can conclude that  \ref{Prob_Pen} exhibits its ability in interpreting 
	the STABLE algorithm and allows great flexibility
	in employing advanced theoretical analysis developed for unconstrained optimization.  
\end{rmk}

\section{Conclusion}

In this paper, we propose an unconstrained optimization problem \ref{Prob_Pen} for the bilevel optimization problem \ref{Prob_Ori}.  We prove that under mild conditions, \ref{Prob_Ori} and \ref{Prob_Pen} have the same stationary points over $\Rn\times \Rp$ in the sense of both Clarke subdifferential and conservative field.  Moreover, \ref{Prob_Pen} has explicit formulation, and its  function value and corresponding conservative field can be easily calculated in the presence of  $\ca{D}_f$ and the derivatives of $g$. Therefore, various prior arts for unconstrained nonsmooth optimization can be directly employed to solve \ref{Prob_Ori} through the unconstrained optimization problem \ref{Prob_Pen}.

We propose a unified framework for developing subgradient methods, which further inspires several subgradient-based methods for solving \ref{Prob_Ori} through \ref{Prob_Pen}. In addition, we show that the proposed framework provides simple interpretations for some existing single-loop algorithms. Specifically, we show that the TTSA, SUSTAIN and STABLE algorithm can be regarded as approximated first-order methods for minimizing \ref{Prob_Pen} when $f$ is assumed to be Lipschitz smooth. Based on our proposed framework, we can straightforwardly extend these algorithms to nonsmooth cases and establish their global convergence properties.

Furthermore, suppose the objective functions $f$ and $g$ in \ref{Prob_Ori} are expressed as the expectation of some random variables, i.e. 
	\begin{equation*}
		f(x, y) = \bb{E}_{\xi}[ f_{\xi}(x,y) ], \quad g(x, y) = \bb{E}_{\theta}[ g_{\theta}(x,y) ],
	\end{equation*}
	where $f_{\xi}: \Rn\times \Rp \to \bb{R}$ and $g_{\theta}: \Rn\times \Rp \to \bb{R}$ are continuous functions that depend on the random variables $\xi$ and $\theta$, respectively. Then  the corresponding \ref{Prob_Pen} can be formulated as
	\begin{equation*}
		\footnotesize
		\begin{aligned}
			\min_{x \in \Rn, y \in \Rp}~ \tilde{h}(x, y):=& 
			\bb{E}_{\xi}\left[ f_{\xi}\left(x, y-\bb{E}_{\theta}\left[ \nabla_{yy}^2 g_{\theta}(x,y) \right]^{-1} \bb{E}_{\theta}\left[ \nabla_{y} g_{\theta}(x,y) \right] \right) \right]  + \frac{\beta}{2} \norm{\bb{E}_{\theta}[ \nabla_{y} g_{\theta}(x,y) ]}^2,
		\end{aligned}
	\end{equation*}
	which can be categorized as a special case of unconstrained conditional stochastic optimization \cite{hu2020sample}. Therefore,  we can directly apply some  existing advanced approaches \cite{lian2017finite,hu2020biased,chen2021solving,gao2021fast} to solve \ref{Prob_Pen} when $f$ is Lipschitz smooth. Moreover, their theoretical properties, including global convergence, iteration complexity and sample complexity, directly follow the results from these existing works. We leave the discussion on how to design efficient algorithms to minimize $\tilde{h}$ over $\Rn\times \Rp$ for future investigation. 
	
	\bibliography{ref}
	\bibliographystyle{plainnat}

\end{document}